\numberwithin{equation}{section}
\theoremstyle{plain}
\newcommand{\A}{\mathbb{A}}
\newcommand{\B}{\mathbb{B}}
\DeclareMathOperator{\Tr}{Tr}
\DeclareMathOperator{\Str}{Str}
\DeclareMathOperator{\Ker}{Ker}
\DeclareMathOperator{\Coker}{Coker}
\DeclareMathOperator{\Dom}{Dom}
\DeclareMathOperator{\Ch}{Ch}
\DeclareMathOperator{\id}{id}
\newcommand{\C}{{\mathbb C}}
\newcommand{\Diff}{\operatorname{Diff}}
\newcommand{\End}{\operatorname{End}}
\newcommand{\E}{\mathbb{E}}
\newcommand{\F}{\mathbb{F}}
\newcommand{\HH}{\operatorname{H}}
\newcommand{\Id}{\operatorname{Id}}
\newcommand{\Image}{\operatorname{Im}}
\newcommand{\Ind}{\operatorname{Ind}}
\newcommand{\K}{\operatorname{K}}
\newcommand{\N}{{\mathbb N}}
\newcommand{\R}{{\mathbb R}}
\newcommand{\sign}{\operatorname{sign}}
\newcommand{\supp}{\operatorname{supp}}
\newcommand{\Todd}{\operatorname{Todd}}
\newcommand{\Z}{{\mathbb Z}}
\newcommand{\tH}{\widetilde{\mathcal{H}}}
\newcommand{\ta}{\widetilde{\mathbb{A}}}
\newcommand{\tg}{\widetilde{\gamma}}
\newtheorem{claim}{Claim}
\newtheorem{lemma}{Lemma}
\newtheorem{theorem}{Theorem}
\newtheorem{proposition}{Proposition}
\newtheorem{corollary}{Corollary}
\theoremstyle{definition}
\newtheorem{remark}{Remark}
\newtheorem{example}{Example}
\newtheorem{definition}{Definition}
\newtheorem{fact}{Fact}
\begin{document}

\begin{abstract}

We give an infinite dimensional
description of the differential $K$-theory of a manifold $M$.
The generators are
triples $\left[ {\mathcal H}, \A, \omega \right]$ where
${\mathcal H}$ is a $\Z_2$-graded Hilbert bundle  on $M$,
$\A$ is a superconnection on ${\mathcal H}$ and $\omega$ is a
differential form on $M$. The relations involve eta forms.
We show that the ensuing group is the differential $K$-group
$\check{K}^0(M)$.
In addition, we construct the pushforward of a finite dimensional
cocycle under a proper submersion with a Riemannian structure.
We give the analogous description of the odd differential $K$-group
$\check{K}^1(M)$. 
Finally, we give a model for
twisted differential $K$-theory.
\end{abstract}

\title{A Hilbert bundle description of differential $K$-theory}
\author{Alexander Gorokhovsky}
\address{Department of Mathematics\\
University of Colorado, Boulder\\
Campus Box 395 \\
Boulder, CO  80309-0395}
\email{gorokhov@euclid.colorado.edu}

\author{John Lott}
\address{Department of Mathematics\\
University of California at Berkeley \\
Berkeley, CA  94720-3840}
\email{lott@berkeley.edu}
\thanks{Research supported by NSF grants DMS-0900968 and
DMS-1510192, and a Simons Fellowship}
\date{January 17, 2018}
\maketitle

\section{Introduction}

Differential $K$-groups are invariants of smooth manifolds that
combine $K$-theory with differential forms.
As shown in \cite{Freed-Lott (2010)}, many results from
local index theory fit into the framework of differential
$K$-theory.  For background and history about differential
$K$-theory, we refer to the introduction of \cite{Freed-Lott (2010)}.

Just as $K$-theory has
different but equivalent descriptions, so does differential
$K$-theory. The primary goal of this paper is to give a new
description of differential $K$-theory, based on Hilbert bundles,
that unifies other descriptions.  A secondary goal is
to provide a functional analytic framework for superconnections
on Hilbert bundles.

We recall the generators for some of the descriptions of the $K$-group
$K^0(M)$ of a compact manifold $M$ :
\begin{enumerate}
\item Vector bundles on $M$ \cite{Atiyah (1994)}.
\item Maps from $M$ to the space of Fredholm operators
\cite[Appendix]{Atiyah (1994)}.
\item Maps $p : Z \rightarrow M$ where $Z$ is compact and $p$ is
$K$-oriented
\cite{Connes-Skandalis (1984)}.
\item $\Z_2$-graded Hilbert $C(M)$-modules equipped with certain bounded operators that
commute with $C(M)$
\cite{Kasparov (1988),Mishchenko-Fomenko (1979)}.
\item $\Z_2$-graded Hilbert $C(M)$-modules equipped with certain
possibly-unbounded operators that
commute with $C(M)$
\cite{Baaj-Julg (1983)}.
\end{enumerate}
Of these descriptions, perhaps the last one,
based on unbounded $KK$-cycles, is the most encompassing one.

For the first three descriptions of $K$-theory,
there are corresponding models for differential $K$-theory :
\begin{enumerate}
\item Vector bundles with connections, as in
\cite{Freed-Lott (2010),Simons-Sullivan (2010)}.
\item The Hopkins-Singer model \cite{Hopkins-Singer (2005)}
\item The geometric families of Bunke-Schick \cite{Bunke-Schick (2009)}.
\end{enumerate}

All of these descriptions give isomorphic groups, which we denote by
$\check{K}^*_{stan}(M)$.
In this paper we give a new model for the differential $K$-theory of $M$,
extending the description of $K$-theory using unbounded $KK$-cycles.
Our model is in terms of
Hilbert bundles on $M$ equipped with superconnections.
One motivation for our model is that it unifies
earlier models, as described below.
The main result of the paper is the following.

\begin{theorem} \label{thm1}
The differential $K$-groups $\check{K}^*(M)$, as
defined using Hilbert bundles and superconnections, are
isomorphic to $\check{K}^*_{stan}(M)$.
\end{theorem}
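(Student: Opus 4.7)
The plan is to construct mutually inverse homomorphisms between $\check{K}^*_{stan}(M)$ and the Hilbert bundle group $\check{K}^*(M)$. The forward map $\Phi\colon \check{K}^*_{stan}(M)\to \check{K}^*(M)$ should be essentially tautological: given a generator $[E,\nabla,\omega]$ of the standard group (in the vector-bundle-with-connection presentation of Freed--Lott/Simons--Sullivan), view $E$ as a finite rank $\Z_2$-graded Hilbert bundle and $\nabla$ as a degree--$1$ superconnection $\A$ with no higher components. The form $\omega$ is unchanged. The verification that $\Phi$ is well defined amounts to checking that the Chern--Simons transgression relations in the standard model are the specialization, to finite rank ordinary connections, of the eta form relations in the Hilbert bundle model; this is essentially a bookkeeping check once one recognizes that eta forms reduce to Chern--Simons forms in this setting.

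The inverse map $\Psi\colon \check{K}^*(M)\to \check{K}^*_{stan}(M)$ is the substantive construction. Starting from a cocycle $(\h,\A,\omega)$, one performs a spectral cutoff of the Dirac-type operator $\A^{[1]}$: under the functional-analytic hypotheses built into the new model (compact resolvent fiberwise, smooth dependence), for each point of $M$ there is a locally constant $\la\notin \spec(\A^{[1]})^2$, and the spectral projection $P_{<\la}$ onto the low modes has locally constant finite rank. Local choices of $\la$ patch together (as virtual bundles) via a chain of finite-dimensional homotopies. The restriction $\A|_{\Image P_{<\la}}$ is a superconnection in finite dimensions whose degree--$1$ part is an ordinary connection $\nabla_\la$, and we define
\begin{equation*}
\Psi[\h,\A,\omega] \defeq \bigl[\Image P_{<\la},\, \nabla_\la,\, \omega - \widetilde{\eta}(\A,\A|_{\Image P_{<\la}}) \bigr],
\end{equation*}
where $\widetilde\eta$ is the eta form interpolating between $\A$ and its finite-dimensional truncation. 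Independence of $\la$ and of the local cover follows from additivity and the transgression identity for eta forms, which are precisely the relations in the Hilbert bundle model; this is the key algebraic point needed for $\Psi$ to descend to differential $K$-classes.

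To verify $\Psi\circ\Phi = \id$, one takes $\la=\infty$ (no truncation needed) so the construction is manifestly trivial. To verify $\Phi\circ\Psi = \id$, one must exhibit a relation in the Hilbert bundle model between $(\h,\A,\omega)$ and its finite-dimensional reduction; this is obtained by the straight-line homotopy $t\A + (1-t)(\A|_{\Image P_{<\la}}\oplus \A|_{\Image P_{\ge \la}})$, whose associated eta form is exactly the correction term subtracted in the definition of $\Psi$, together with the observation that the high-spectrum summand is contractible in the Hilbert bundle model because $\A|_{\Image P_{\ge \la}}$ has no kernel and so its cocycle is equivalent to zero via the eta form relation.

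The main obstacle is the functional-analytic infrastructure underpinning $\Psi$: one must know that the spectral projections $P_{<\la}$ depend smoothly on the base point, that the eta forms used in the definition converge and vary smoothly, and that the requisite Schatten-class and trace-class conditions are stable under direct sum, stabilization by trivial bundles, and pullback. Once these are in place, the passage from infinite to finite rank is essentially the standard index-bundle construction, and the rest of the proof is a verification that the equivalence relations on the two sides match up under $\Phi$ and $\Psi$.
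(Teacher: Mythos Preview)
Your overall architecture matches the paper's: a tautological map from the finite-dimensional model into the Hilbert-bundle model (your $\Phi$, the paper's $r$), and a reduction map in the other direction (your $\Psi$, the paper's $q$), then verify they are mutual inverses. The substantive difference is in how the reduction map is built.

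The paper does \emph{not} use a spectral cutoff. Instead, given $[\mathcal{H},\A,\omega]$ it stabilizes by a trivial finite-rank bundle $E$ together with a map $s\colon E\to\mathcal{H}^-$ chosen so that $\A_{[0]}^+ + s$ is fiberwise surjective; this forces the kernel of the stabilized degree-zero operator to be a genuine vector bundle \emph{globally} on $M$, and one then projects onto that kernel. This is the standard index-bundle trick, and its virtue here is that it produces a single global finite-dimensional representative with no patching. The paper then spends Propositions~3 and~4 showing that the resulting class in $\check{K}^0_{stan}(M)$ is independent of the choice of $(E,s,\nabla^E)$ and that $q$ respects the three relations defining $\check{K}^0(M)$; these arguments use the additivity of eta forms (Lemma~10) and the formula (2.25) comparing $\eta(\cdot,\infty)$ for two invertible superconnections. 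Your ``vector-bundle-kernel'' picture does appear, but only as the simplified formula of Proposition~5, after the perturbation has already been made.

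Your spectral-cutoff route is plausible at the level of topological $K$-theory, but at the differential level the patching step is where the real content hides, and you have not supplied it. Two concrete issues: (i) if $\lambda_1<\lambda_2$ are two local cutoffs, the bundles $\Image P_{<\lambda_1}$ and $\Image P_{<\lambda_2}$ differ by a piece on which $\A_{[0]}$ is invertible, and you must show the eta-form corrections agree modulo $\Image(d)$; this is exactly the computation the paper carries out in Proposition~3, and it is not automatic. (ii) More seriously, to show $\Psi$ respects relation~(3), note that an $op^0$ perturbation of $\A_{[0]}$ can move eigenvalues across your cutoff $\lambda$, so $\Image P_{<\lambda}$ can jump rank along the deformation; you cannot simply fix $\lambda$ and deform. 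The paper handles this (Proposition~4) by running the perturbation construction over $[0,1]\times M$ with a single choice of $(E,s)$ that works for the whole family, which your local cutoff does not provide.

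Finally, a notational slip: in the paper's conventions the Dirac-type operator is the degree-zero part $\A_{[0]}$; the degree-one part $\A_{[1]}$ is the connection. Your $\A^{[1]}$ should read $\A_{[0]}$ throughout.
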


Given a finite dimensional Hermitian vector bundle on $M$ with
compatible connection, we can think of it as a Hilbert bundle on
$M$ with a very special superconnection.  Hence our Hilbert bundle model
includes the standard description of differential $K$-theory using
finite dimensional vector bundles with connection.
Similarly, given a geometric
family in the sense of \cite[Section 2]{Bunke-Schick (2009)}, there is an
ensuing Hilbert bundle equipped with the Bismut superconnection
\cite{Bismut (1986)}.
Hence our model
also includes the description of differential $K$-theory using geometric
families.  However, we do not have an obvious way to construct
a Hilbert bundle, with superconnection, from a
Hopkins-Singer cocycle \cite[Section 4.4]{Hopkins-Singer (2005)}.

To motivate the use of superconnections, we recall that in the
vector bundle description of $\check{K}^0_{stan}(M)$, the generators
are triples $[E,\nabla, \omega]$, where $E$ is a finite
dimensional Hermitian $\Z_2$-graded
vector bundle on $M$, $\nabla$ is a compatible connection and
$\omega \in \Omega^{odd}(M)/\Image(d)$. The relations involve
Chern-Simons forms.  There is an equivalent description whose
generators are triples $[E, \A, \omega]$, where $\A$ is a
compatible superconnection on $E$ in the sense of Quillen
\cite{Quillen (1985)}, and whose relations involve eta forms.
When we pass to infinite dimensional vector bundles, the
Chern character construction using connections no longer make
sense.  However, under suitable hypotheses, we show that
the construction using superconnections does make sense.

Hence one goal of this paper is to find the right setting
for superconnections on Hilbert bundles. As an indication,
such a setting should allow for Bismut superconnections.
A first question is what the structure group $G$ of such a Hilbert
bundle should be.  The answer to this is not immediately evident.
One remark is that to deal with a geometric family
whose fiber is a compact manifold $Z$, there should be a
smooth homomorphism from $\Diff(Z)$ to $G$ coming from the action
of $\Diff(Z)$ on the Hilbert space $H$ of
square-integrable half-densities on $Z$.
However, with the norm topology on $U(H)$,
such an action is not even continuous (as seen in the
action on the circle on itself by rotations).

To construct $G$ in general, we use the data of a
Hilbert space $H$ and
an unbounded self-adjoint operator $D$ on $H$ that is $\theta$-summable
for all $\theta > 0$
(such as a Dirac-type operator). Using $D$,
in Section \ref{sect1} we define Sobolev
spaces $H^s$ and pseudodifferential operators $op^k$ that map $H^s$
to $H^{s-k}$,
following Connes and Moscovici
\cite[Appendix B]{Connes-Moscovici (1995)}.
As a set, we take $G = U(H) \cap op^0$. To put a smooth
structure on $G$, we can use the fact that we only care about
Hilbert bundles over finite dimensional manifolds, as opposed to
more general base spaces.  Hence it
suffices to say what a smooth map, from a domain in Euclidean space
to $G$, should be.  This is the underlying idea of diffeological
smooth structures \cite{Iglesias (2013)}. In our case,
we say that such maps are smooth if
they are compatible in a certain sense
with the Fr\'echet topologies on $H^s$ and $op^k$.

Given a Hilbert bundle ${\mathcal H}$ on $M$, with such a structure
group, in Section \ref{sect2}
we develop the theory of superconnections $\A$ on ${\mathcal H}$.
We construct their Chern characters and eta forms.
In Section \ref{sect3} we give our generators and relations for
$\check{K}^0(M)$.
The generators are triples $[{\mathcal H}, \A, \omega]$ where
${\mathcal H}$ is a $\Z_2$-graded Hilbert bundle on $M$,
$\A$ is a superconnection on ${\mathcal H}$ and
$\omega \in \Omega^{odd}(M)/\Image(d)$. There are three relations.
The first relation is about taking direct sums.
The second relation arises when the degree-$0$ part
$\A_{[0]}$ of the superconnection is invertible, and involves
an eta form.  The third relation says what happens when one
changes $\A_{[0]}$ by a family of operators in $op^0$, and
involves a relative eta form.

If $c$ is a generator for $\check{K}^0(M)$ then we construct a generator
$q(c)$ for $\check{K}^0_{stan}(M)$, based on certain choices.
We show that the class of $q(c)$ in $\check{K}^0_{stan}(M)$ is
independent of the choices.  We prove that $q$ passes to a map
$q : \check{K}^0(M) \rightarrow
\check{K}^0_{stan}(M)$. We then show that $q$ is an isomorphism, thereby
proving Theorem \ref{thm1}.

One advantage of a Hilbert bundle approach to
differential $K$-groups is that,
as in \cite{Bunke-Schick (2009)}, the pushforward becomes
essentially tautological.
We recall that using the finite dimensional model
$\check{K}^0_{stan}$ for
differential $K$-theory, in
\cite{Freed-Lott (2010)} 
two pushforwards were defined,
called the analytic index 
and the topological index.
The definition of the analytic index there involved some
perturbations.
The main theorem of \cite{Freed-Lott (2010)} said
that the two indices coincide. 
Given a fiber bundle $\pi : M \rightarrow B$
with even dimensional compact fibers and a Riemannian structure,
in Section \ref{sect4} we
define the pushforward $\pi_* [{\mathcal H}, \A, \omega]$
of a finite dimensional representative
$[{\mathcal H}, \A, \omega]$ for $\check{K}^0(M)$.

\begin{theorem} \label{newtheorem2}
The pushforward $\pi_*$ on cocycles passes to a map $\pi_* : \check{K}^0(M)
  \rightarrow \check{K}^0(B)$.
  It coincides with the
  analytic index of \cite{Freed-Lott (2010)}.
\end{theorem}

The proof of Theorem \ref{newtheorem2} partially uses results from
\cite{Bunke-Ma (2004),Liu (2015)}. 

Another advantage of using Hilbert bundles is that it allows a unified
treatment of even and odd differential $K$-groups.  (By way of
contrast, in \cite[Section 9]{Freed-Lott (2010)} and
\cite{Tradler-Wilson-Zeinalian (2013)}, the odd differential
$K$-groups
were constructed based on the model of odd $K$-theory coming from maps
to unitary groups.)
In Section \ref{sect5} we indicate how the results of the preceding
sections extend to the odd differential $K$-group $\check{K}^1(M)$.

Yet another advantage of using Hilbert bundles is that it allows for a
simple model of twisted differential $K$-theory.  We recall that ordinary
$K$-theory can be twisted by an element of $\HH^3(M; \Z)$.
The corresponding twisted differential $K$-theory has been considered in
many papers, including
\cite{Bunke-Nikolaus (2014),Bunke-Schick (2012),Carey-Mickelsson-Wang (2009),Park (2016)}.
Some applications to mathematical physics are in
\cite{Kahle-Valentino (2014)}.
In Section \ref{twisted} we give the basic definitions for a
Hilbert bundle model $\check{K}^0_{\mathcal L}(M)$ of
twisted differential $K$-theory, where ${\mathcal L}$ is a
unitary gerbe on $M$. 
If one restricts to finite dimensional vector bundles in the definition
then one can only deal with twistings by torsion elements of $\HH^3(M; \Z)$.

There are many directions for further study, including the following.\\
1. Extend the pushforward to infinite dimensional cocycles for
$\check{K}^0$; see Remark \ref{added}.\\ 
2. Show that the twisted differential $K$-groups
$\check{K}^0_{\mathcal L}(M)$ satisfy an axiomatic characterization,
as outlined in \cite[Section 7]{Bunke-Schick (2012)}. \\
3. Construct a topological pushforward in $\check{K}^0$. We recall that
the topological pushforward in $KK$-theory involves a Kasparov product
with a $\overline{\partial}$-operator \cite{Kasparov (1988)}. 

The paper has an appendix in which we prove a formula for the
Chern character of a superconnection in
relative cohomology. There is an application to eta forms.

More detailed descriptions of the content of the paper appear at the beginnings
of the sections.

We thank Dan Freed for discussions about
superconnections and differential $K$-theory, and for comments on an
earlier version of this paper.
We thank Jean-Pierre Bourguignon and Alan Weinstein for telling us
about diffeology. We also thank the referees for helpful comments.

\section{Pseudodifferential calculus} \label{sect1}

This section is devoted to functional analytic preliminaries.
Given a $\Z_2$-graded Hilbert space and an odd self-adjoint operator
$D$ that is $\theta$-summable for all $\theta > 0$, in Subsection
\ref{subsect1.1} we define order-$s$ Sobolev spaces $H^s$ and
order-$k$ pseudodifferential operators $op^k$. We prove basic
composition properties of the pseudodifferential operators. In Subsection
\ref{subsect1.2} we consider a space ${\mathcal P}$ of ``Dirac-type''
order-$1$ operators.  We prove in particular that ${\mathcal P}$ is
preserved by the addition of order-$0$ operators.  Subsection
\ref{subsect1.3} shows that for any $\epsilon >0$,
the map that sends $P$ to $e^{- \epsilon P^2}$ is smooth,
in the diffeological sense, as a map from ${\mathcal P}$ to the space
of trace-class operators. The heart of the proof is to justify
a Duhamel formula in this setting.

\subsection{Operator norms} \label{subsect1.1}

Let $H = H^+ \oplus H^-$ be a $\mathbb{Z}_2$-graded Hilbert space (possibly finite dimensional) with inner product $\langle \cdot, \cdot \rangle_H$.
Let ${\mathcal L}(H)$ denote the bounded operators on $H$,
with operator norm $\| \cdot \|$.
Let ${\mathcal L}^1(H)$ be the trace ideal of $H$, with its
norm $\| \cdot \|_{{\mathcal L}^1}$.
Let $D$ be an odd (with respect to the $\Z_2$-grading)
self-adjoint operator on $H$, possibly unbounded,
which is $\theta$-summable for all $\theta > 0$, i.e.
\begin{equation} \label{1.1}
\Tr e^{-\theta D^2} < \infty.
\end{equation}
In particular, $D^2$ has a discrete spectrum.
Let $P_{\Ker(D^2)}$ be orthogonal projection onto $\Ker(D^2)$.
Define
\begin{equation} \label{1.2}
|D| = \sqrt{D^2} + P_{\Ker(D^2)}.
\end{equation}
If $D$ is invertible then $|D|$ has the usual meaning, but for us $|D|$
is always a strictly positive operator.

For $s \in \Z$ nonnegative, put $H^s = \Dom \left( |D|^s \right)$,
with the inner product
\begin{equation} \label{1.3}
\langle v_1, v_2 \rangle_{H^s} =
\langle |D|^s v_1, |D|^s v_2 \rangle_{H}.
\end{equation}
For $s \in \Z$ negative, put $H^s = \left( H^{-s} \right)^*$.
Put $H^\infty = \bigcap_{s \ge 0} H^s$, a dense subspace of $H$.

Following
\cite[Appendix B]{Connes-Moscovici (1995)},
let $op^k$ be the closed operators $F$ such that
\begin{enumerate}
\item  $H^\infty \subset \Dom(F)$,
\item $F( H^\infty) \subset H^\infty$, and
\item
For all $s \in \Z$,  the operator $F : H^\infty \rightarrow H^\infty$ extends
to a bounded operator from $H^s$ to $H^{s-k}$.
\end{enumerate}

Let $|F|_{k,s}$ be the operator norm for $F : H^s \rightarrow H^{s-k}$.
Then $op^k$ is a Fr\'echet space with respect to the norms
$|F|_{k,s}$. We take a product of operators to act from right to left.
Using the isometric isomorphism $|D|^{-s} :
H^0 \rightarrow H^s$, if $F \in op^k$ then
\begin{equation} \label{1.4}
|F|_{k,s} = \parallel |D|^{s-k} F
|D|^{-s} \parallel.
\end{equation}

Let ${\mathcal L}^{fr}_\infty$ be the ideal of
finite rank operators, i.e. the set of operators $T$ on $H$
that can be expressed as
\begin{equation} \label{1.4.5}
T(v) = \sum_i \xi_i \langle \eta_i, v \rangle_H,
\end{equation}
where the sum is finite and $\xi_i, \eta_i \in H^\infty$.
Then ${\mathcal L}^{fr}_\infty \subset \bigcap_{k \in \Z} op^k$.

\begin{lemma} \label{lem1}
\begin{enumerate}
\item If $F_1 \in op^{k_1}$ and $F_2 \in op^{k_2}$
then $F_1 F_2 \in op^{k_1+k_2}$ and
\begin{equation} \label{1.5}
|F_1 F_2 |_{k_1+k_2, s} \le |F_1|_{k_1, s-k_2} |F_2|_{k_2, s}.
\end{equation}
\item If $F \in op^k$, and $F : H^s \rightarrow H^{s-k}$ is an isomorphism
for each $s \in \Z$, then $F^{-1} \in op^{-k}$. \\
\item
If $F \in op^0$ then its adjoint $F^*$ in $B(H)$ satisfies $F^* \in op^0$.
\end{enumerate}
\end{lemma}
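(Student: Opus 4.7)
The plan is to reduce each clause to a statement about bounded operators on $H = H^0$ via the identity $|F|_{k,s} = \bigl\| |D|^{s-k} F |D|^{-s} \bigr\|$ from (1.4), using the isometric isomorphism $|D|^{-s} : H \to H^s$. The preservation of $H^\infty$ built into the definition of $op^\bullet$ lets all the algebraic manipulations be performed on the common dense subspace $H^\infty$ before passing to bounded extensions.

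For part (1), I would use the identity
\begin{equation*}
|D|^{s - k_1 - k_2} F_1 F_2 |D|^{-s} \;=\; \bigl( |D|^{s - k_1 - k_2} F_1 |D|^{-(s-k_2)} \bigr) \bigl( |D|^{s-k_2} F_2 |D|^{-s} \bigr)
\end{equation*}
on $H^\infty$, which makes sense because $F_2$ and then $F_1$ preserve $H^\infty$. Each factor on the right extends to a bounded operator on $H$ of norm $|F_1|_{k_1, s-k_2}$ and $|F_2|_{k_2, s}$ respectively, so submultiplicativity of the operator norm delivers (1.5); this bound, together with $F_2(H^\infty) \subset H^\infty$ and $F_1(H^\infty) \subset H^\infty$, verifies all three clauses of the definition of $op^{k_1+k_2}$ for $F_1 F_2$. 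For part (2), if $v \in H^\infty$ then the isomorphism hypothesis gives a preimage in $H^s$ for every $s \in \Z$, and these elements coincide by injectivity of $F$, so $F^{-1} v \in \bigcap_s H^s = H^\infty$; this furnishes clauses (1) and (2). Clause (3) is immediate, since the inverse of the bounded bijection $F : H^s \to H^{s-k}$ of Hilbert spaces is automatically bounded.

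For part (3), the boundedness of $F$ on $H^0 = H$ (take $s = 0$ in (1.4)) makes $F^*$ a well-defined bounded operator on $H$. The key identity, valid on all of $H$ and obtained by checking it first on $H^\infty$ using the self-adjointness of $|D|^s$ and the boundedness of $F$, is
\begin{equation*}
\bigl( |D|^{s} F |D|^{-s} \bigr)^{*} \;=\; |D|^{-s} F^{*} |D|^{s}.
\end{equation*}
Taking operator norms and applying (1.4) yields $|F^*|_{0,-s} = |F|_{0,s}$ for every $s \in \Z$, and substituting $\xi = |D|^s \eta$ with $\eta \in H^\infty$ into the same identity shows that $F^* \xi \in H^s$ for every $s$, hence $F^*(H^\infty) \subset H^\infty$.

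None of these arguments presents a serious obstacle; the one point that merits care, common to all three parts, is justifying each intertwining identity as one of bounded operators on $H$ rather than of a priori unbounded operators on overlapping domains. This is handled uniformly by first establishing the identity on $H^\infty$, where every operator in sight makes sense, and only then extending by density using the isometry $|D|^{-s} : H \to H^s$.
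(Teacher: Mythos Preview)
Your proof is correct and follows the same route as the paper: the factorization in part~(1) and the bounded inverse theorem in part~(2) are exactly what the paper invokes (tersely), and your adjoint identity $(|D|^{s} F |D|^{-s})^{*} = |D|^{-s} F^{*} |D|^{s}$ is the paper's equation~(\ref{1.9}) with $s$ replaced by $-s$. One cosmetic remark: your substitution $\xi = |D|^{s}\eta$ in part~(3) actually yields $F^{*}\xi \in H^{-s}$ rather than $H^{s}$, but since $s$ ranges over all of $\Z$ the conclusion $F^{*}(H^{\infty}) \subset H^{\infty}$ is unaffected.
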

\begin{proof}
(1). The proof is straightforward. \\
(2). By the bounded inverse theorem, $F^{-1} : H^{s-k} \rightarrow H^s$
is bounded for each $s \in \Z$. In particular, $H^\infty \subset \Dom(F^{-1})$
and $F^{-1}(H^\infty) \subset H^\infty$. \\
(3). If $v \in H^s$ then for all $w \in H^\infty$, we have
\begin{equation} \label{1.6}
\langle F w, v \rangle_H =
\langle
\left( |D|^{-s} F |D|^{s} \right)
|D|^{-s} w,
|D|^{s} v \rangle_H,
\end{equation}
showing that
\begin{equation} \label{1.7}
F^* v =
|D|^{-s}
\left( |D|^{-s} F |D|^{s} \right)^*
|D|^{s} v.
\end{equation}
In particular,
\begin{equation} \label{1.8}
|D|^{s}
F^* v =
\left( |D|^{-s} F |D|^{s} \right)^*
|D|^{s} v \in H,
\end{equation}
showing that $F^*(H^s) \subset H^s$, with
\begin{equation} \label{1.9}
F^* = |D|^{-s}
\left( |D|^{-s} F |D|^{s} \right)^*
|D|^{s}
\end{equation}
being a bounded operator on $H^s$.
\end{proof}

\begin{example} \label{ex1}
Suppose that $H$ is finite dimensional.  Then
$H^\infty = H$ and $H^s = H$ for all $s \in \Z$.
Also, $op^k = B(H)$ for all $k \in \Z$.
\end{example}

\begin{example} \label{ex2}
Let $Z$ be a compact Riemannian manifold.  Let $V$ be a Clifford
module over $Z$, equipped with a compatible connection.
Let ${\mathcal D}^{\frac12}$ be the half-density line bundle
on $Z$. Put $H = L^2(Z; {\mathcal D}^{\frac12} \otimes V)$.
Let $D$ be the Dirac-type operator on $H$. Then
$H^{\infty} = C^\infty(Z; {\mathcal D}^{\frac12} \otimes V)$ and
$H^s = H^s(Z; {\mathcal D}^{\frac12} \otimes V)$.
A pseudodifferential operator of order $k$ gives an element of
$op^k$.
\end{example}

\subsection{The space of Dirac-type operators} \label{subsect1.2}

\begin{definition} \label{def1}
$\mathcal{P}$ is the space of odd self-adjoint operators $P \in op^1$ such that
$|P|^{-1} \in op^{-1}$.
\end{definition}

In Section \ref{sect2}, the elements of ${\mathcal P}$ will become
the possible degree-$0$ terms of the superconnection.

Recall that $|P|$ is defined to be $1$ on $\Ker(P^2)$.
If $P \in {\mathcal P}$ then $P^2 |P|^{-1} \in op^1$. As $|P| -
P^2 |P|^{-1} \in {\mathcal L}^{fr}_\infty$, it follows that
$|P| \in op^1$.
Then for any $s \in \Z$, the operators $|D|^{s}
|P|^{-s}$ and $|P|^s
|D|^{-s}$ lie in $op^0$ and, in particular, are bounded
on $H$. It follows that if we defined $H^s$ using $P$ instead of $D$
then we would get the same spaces. If we think of $D$ as being a given
Dirac operator then we can think of ${\mathcal P}$ as being a collection of
Dirac-type operators.  For example, if $D$ is the operator of
Example \ref{ex2}, and $P$ is the operator arising from a different
Riemannian metric on $Z$ and a different Clifford connection on $V$,
then $P \in {\mathcal P}$.

\begin{lemma} \label{lem2}
\begin{enumerate}
\item Any $P \in \mathcal{P}$ is $\theta$-summable for all $\theta > 0$.
\item If $P \in \mathcal{P}$, and $Q \in op^0$ is odd and self-adjoint, then
$P+Q$ is $\theta$-summable for all $\theta > 0$. More precisely, for
any $\epsilon \in (0,1)$, we have
\begin{equation} \label{1.10}
\Tr e^{- \theta (P+Q)^2} \le
e^{\theta \left( \epsilon^{-2} - 1 \right) \| Q \|^2} \cdot
\Tr e^{- \theta (1 - \epsilon^2) P^2}.
\end{equation}
\item Given $P \in \mathcal{P}$, $F_1 \in op^{k_1}$, $F_2 \in op^{k_2}$ and
$\epsilon >0$,
for every $t \ge \epsilon$ we have
\begin{equation} \label{1.11}
\| F_1 e^{-tP^2} F_2 \|_{\mathcal{L}^1} \le C(\epsilon, P, k_1, k_2)
|F_1|_{k_1,k_1} |F_2|_{k_2, 0}.
\end{equation}
and
\begin{equation} \label{trace}
\Tr \left( F_1 e^{-tP^2} F_2 \right) =
\Tr \left( e^{-tP^2} F_2 F_1 \right) =
\Tr \left( F_2 F_1 e^{-tP^2} \right).
\end{equation}
\end{enumerate}
\end{lemma}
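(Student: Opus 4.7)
The plan is to treat the three parts in order, with (1) feeding into (2) and (3) essentially independent.

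For (1), the hypothesis $|P|^{-1} \in op^{-1}$ together with the composition statement of Lemma \ref{lem1}(1) implies $|D| \cdot |P|^{-1} \in op^0$, hence bounded on $H$. Plugging in $v = |P|^{-1}w$ yields $\||D| v\| \le C\|P v\|$ on $H^1$, i.e. $|D|^2 \le C^2 P^2$ as quadratic forms. Since $|D|^s$ and $|P|^s$ define the same Sobolev spaces (as noted after Definition \ref{def1}), the min-max characterization of eigenvalues of positive self-adjoint operators with discrete spectrum gives $\lambda_n(D^2) \le C^2 \lambda_n(P^2)$, so
\[
\Tr e^{-\theta P^2} = \sum_n e^{-\theta \lambda_n(P^2)} \le \sum_n e^{-(\theta/C^2) \lambda_n(D^2)} = \Tr e^{-(\theta/C^2) D^2} < \infty.
\]

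For (2), the starting point is the elementary operator inequality $(\epsilon P + \epsilon^{-1} Q)^2 \ge 0$, valid as a quadratic form on $H^1$ since $\epsilon P + \epsilon^{-1} Q$ is self-adjoint there. Expanding and substituting into $(P+Q)^2 = P^2 + (PQ+QP) + Q^2$ yields
\[
(P+Q)^2 \ge (1-\epsilon^2)P^2 + (1-\epsilon^{-2})Q^2 \ge (1-\epsilon^2)P^2 - (\epsilon^{-2}-1)\|Q\|^2,
\]
so $-\theta(P+Q)^2 \le -\theta(1-\epsilon^2)P^2 + \theta(\epsilon^{-2}-1)\|Q\|^2$. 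By the same min-max comparison used in (1), the operator ordering of self-adjoint operators with discrete spectrum implies $\Tr e^A \le \Tr e^B$ whenever $A \le B$ and the right side is finite; applying this and factoring out the scalar exponential produces (1.10), with finiteness of the right-hand side being part (1) applied with parameter $\theta(1-\epsilon^2)$.

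For (3), split $e^{-tP^2} = e^{-tP^2/2} \cdot e^{-tP^2/2}$ and factor
\[
F_1 e^{-tP^2/2} = \bigl(F_1 |D|^{-k_1}\bigr) \cdot \bigl(|D|^{k_1} |P|^{-k_1}\bigr) \cdot \bigl(|P|^{k_1} e^{-tP^2/2}\bigr).
\]
The first factor is bounded with norm $|F_1|_{k_1, k_1}$, the second is bounded by the comparison of $|D|^s$ and $|P|^s$, and the third is trace class: the spectral theorem gives
\[
\bigl\| |P|^{k_1} e^{-tP^2/2} \bigr\|_{{\mathcal L}^1} \le \Bigl(\sup_{\lambda} |\lambda|^{k_1} e^{-t\lambda^2/4}\Bigr) \cdot \Tr e^{-tP^2/4},
\]
and for $t \ge \epsilon$ both factors are bounded uniformly, the second by (1). (If $k_1 < 0$ use simply that $|P|^{k_1}$ is globally bounded.) Hence $F_1 e^{-tP^2/2}$ is trace class with norm $\le C(\epsilon, P, k_1) |F_1|_{k_1, k_1}$. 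An entirely parallel factorization of $e^{-tP^2/2} F_2 = \bigl(e^{-tP^2/2} |D|^{k_2}\bigr) \cdot \bigl(|D|^{-k_2} F_2\bigr)$ bounds its operator norm by $C'(\epsilon, P, k_2) |F_2|_{k_2, 0}$, proving (1.11). The trace identities (\ref{trace}) then follow from the standard cyclicity $\Tr(AB) = \Tr(BA)$ (trace class times bounded), applied to the factorization $F_1 e^{-tP^2} F_2 = (F_1 e^{-tP^2/2})(e^{-tP^2/2} F_2)$ and to the analogous regroupings for $F_2 F_1 e^{-tP^2}$ and $e^{-tP^2} F_2 F_1$.

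The main technical obstacle is the bookkeeping in (3): one must verify that every constant in the estimates can be taken uniform in $t \ge \epsilon$, and that the operators composed make sense on $H^\infty$ (automatic from the $op^k$ calculus). The trace-exponential monotonicity step in (2) deserves a remark since the operators are unbounded, but reduces to the same min-max argument used in (1).
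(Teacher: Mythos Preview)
Your argument is correct and follows essentially the same route as the paper in all three parts: comparing $D^2$ to $P^2$ via boundedness of $|D|\,|P|^{-1}$-type operators for (1), the square-expansion trick $(\epsilon P+\epsilon^{-1}Q)^2\ge 0$ for (2), and a factorization isolating a trace-class heat-kernel piece for (3).

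One small slip in (1): from $|D|\,|P|^{-1}\in op^0$ you obtain $|D|^2\le C^2|P|^2=C^2(P^2+P_{\Ker P^2})\le C^2(P^2+I)$, not $|D|^2\le C^2 P^2$ as you wrote (the latter fails on $\Ker P$); the paper reaches the same inequality $D^2\le C(I+P^2)$ directly via boundedness of $|P|^{-1}D^2|P|^{-1}$, and the conclusion is unaffected. In (3) the paper uses a six-factor decomposition with $e^{-\epsilon P^2/2}$ as the trace-class factor and a spectral bound on $|P|^{k_1}e^{-(t-\epsilon/2)P^2}|P|^{k_2}$, whereas you split symmetrically at $e^{-tP^2/2}$; the two are equivalent.
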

\begin{proof} (1). Since $|P|^{-1} D^2
|P|^{-1}$ lies in
$op^0$, it is bounded on $H$. Hence there is some $C < \infty$ so that
$D^2 \le C (I + P^2)$. Thus $P^2 \ge C^{-1} D^2 - I$, so
$P$ is $\theta$-summable. \\
(2). We follow the method of proof of \cite[Theorem C]{Getzler-Szenes (1989)}.
For any $\epsilon > 0$, we have
\begin{equation} \label{1.12}
0 \le (\epsilon P + \epsilon^{-1} Q)^2 = \epsilon^2 P^2 + (PQ+QP) +
\epsilon^{-2} Q^2.
\end{equation}
If $\epsilon \in (0,1)$ then
\begin{align} \label{1.13}
- \theta (P+Q)^2 = & - \theta (P^2 + PQ+QP+Q^2) \\
\le &
- \theta (1 - \epsilon^2) P^2 + \theta (\epsilon^{-2} - 1) Q^2 \notag \\
\le &
- \theta (1 - \epsilon^2) P^2 + \theta (\epsilon^{-2} - 1) \| Q \|^2. \notag
\end{align}
The claim follows.
\\
(3). We have
\begin{align} \label{1.14}
F_1 e^{-tP^2} F_2 = &
\left( F_1 |D|^{-k_1} \right) \cdot
\left( |D|^{k_1} |P|^{-k_1}  \right) \cdot
e^{- \: \frac{\epsilon}{2} P^2} \cdot \\
& \left( |P|^{k_1} e^{- (t - \frac{\epsilon}{2})P^2}
|P|^{k_2} \right) \cdot
\left( |P|^{-k_2} |D|^{k_2}
\right) \cdot \notag \\
& \left( |D|^{-k_2} F_2 \right). \notag
\end{align}
By assumption, each of the six factors in (\ref{1.14}) is bounded on $H$.
Part (1) shows that
$e^{- \: \frac{\epsilon}{2} P^2}$ is trace class.  Hence the product
is trace class. Using (\ref{1.4}), we obtain
\begin{align} \label{1.15}
\| F_1 e^{-tP^2} F_2 \|_{{\mathcal L}^1} \le &
| F_1 |_{k_1,k_1} \cdot
\| |D|^{k_1} |P|^{-k_1} \| \cdot \\
&
\| e^{- \: \frac{\epsilon}{2} P^2} \|_{{\mathcal L}^1} \cdot
\| |P|^{k_1} e^{- (t - \frac{\epsilon}{2})P^2}
|P|^{k_2} \| \cdot
\notag \\
& \| |P|^{-k_2} |D|^{k_2} \| \cdot
| F_2 |_{k_2,0}. \notag
\end{align}
From the spectral theorem,
\begin{equation} \label{1.16}
\| |P|^{k_1} e^{- (t - \frac{\epsilon}{2})P^2}
|P|^{k_2} \| \le
\sup_{r \in \R} \left( (1+r^2)^{\frac{k_1 + k_2}{2}}
e^{- \frac{\epsilon}{2} r^2} \right) < \infty.
\end{equation}

Next, we can write
\begin{align}
\Tr \left( F_1 e^{-tP^2} F_2 \right) = &
\Tr \left( F_1 e^{- \frac{\epsilon}{4} P^2} \cdot e^{- \left(t -
\frac{\epsilon}{2} \right) P^2} \cdot
e^{- \frac{\epsilon}{4} P^2} F_2 \right) \\
= & \Tr \left(  e^{- \left(t -
\frac{\epsilon}{2} \right) P^2} \cdot
e^{- \frac{\epsilon}{4} P^2} F_2 \cdot F_1 e^{- \frac{\epsilon}{4} P^2}
\right) \notag \\
= & \Tr \left( e^{- \frac{\epsilon}{4} P^2} e^{- \left(t -
\frac{\epsilon}{2} \right) P^2}
e^{- \frac{\epsilon}{4} P^2} F_2 F_1
\right) \notag \\
= & \Tr \left( e^{- t P^2} F_2 F_1 \right). \notag
\end{align}
Similarly,
\begin{align}
\Tr \left( F_1 e^{-tP^2} F_2 \right) = &
\Tr \left( F_1 e^{- \frac{\epsilon}{4} P^2} \cdot e^{- \left(t -
\frac{\epsilon}{2} \right) P^2} \cdot
e^{- \frac{\epsilon}{4} P^2} F_2 \right) \\
= &
\Tr \left(
e^{- \frac{\epsilon}{4} P^2} F_2 \cdot
F_1 e^{- \frac{\epsilon}{4} P^2} \cdot e^{- \left(t -
\frac{\epsilon}{2} \right) P^2} \right) \notag \\
= &
\Tr \left(
F_2 F_1 e^{- \frac{\epsilon}{4} P^2} e^{- \left(t -
\frac{\epsilon}{2} \right) P^2} e^{- \frac{\epsilon}{4} P^2} \right)
\notag \\
= &
\Tr \left(
F_2 F_1 e^{- t P^2} \right). \notag
\end{align}
This proves the claim.
\end{proof}

\begin{proposition} \label{prop1}
An odd self-adjoint operator $P \in op^1$  is in
$\mathcal{P}$ if and only if there is some odd self-adjoint operator
$Q  \in op^{-1}$ with
$PQ-I \in op^{-1}$ and $QP-I \in op^{-1}$.
\end{proposition}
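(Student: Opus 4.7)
The forward implication ($\Rightarrow$) is constructive: take $Q := P|P|^{-2}$. Oddness and self-adjointness are immediate ($\Ker P^2$ is graded, so $|P|^{-2}$ is even, and $|P|$ commutes with $P$), and $Q = P \cdot |P|^{-1} \cdot |P|^{-1} \in op^{-1}$ by Lemma~\ref{lem1}. Spectrally, $PQ = P^2|P|^{-2} = I - P_{\Ker P^2}$, and this projection is finite-rank: because $|D|^{-1}$ is compact by the $\theta$-summability of $D$, the factorisation $|P|^{-1} = (|P|^{-1}|D|)\cdot|D|^{-1}$ exhibits $|P|^{-1}$ as compact, so $|P|$ has discrete spectrum and its eigenspace at the value $1$ (which contains $\Ker P^2$) is finite-dimensional. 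Hence $PQ - I \in \bigcap_k op^k \subset op^{-1}$, and symmetrically $QP - I \in op^{-1}$.

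For the reverse implication ($\Leftarrow$), my first goal is to establish $|P|^{-2} \in op^{-2}$. With $K_1 := I - PQ$ and $K_2 := I - QP$ in $op^{-1}$, a short manipulation gives $Q^2 P^2 = I - R$ and $P^2 Q^2 = I - R'$ with $R, R' \in op^{-1}$, so $Q^2$ is a parametrix for $P^2$ modulo $op^{-1}$. Since elements of $op^{-1}$ act compactly on every $H^s$ (the embedding $H^{s+1} \hookrightarrow H^s$ is compact because $|D|^{-1}$ is), $P^2 \colon H^s \to H^{s-2}$ is Fredholm for each $s$; an elliptic-regularity bootstrap on the identity $v = Q^2 P^2 v + Rv$ shows $\Ker P^2 \subset H^\infty$ is finite-dimensional. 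Consequently $P^2$ admits a generalized inverse $(P^2)^+$ bounded as $H^{s-2} \to H^s$ for every $s \in \Z$, so $|P|^{-2} = (P^2)^+ + P_{\Ker P^2} \in op^{-2}$.

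The main obstacle is then to upgrade $|P|^{-2} \in op^{-2}$ to $|P|^{-1} \in op^{-1}$: composition in the $op^*$-calculus is additive in orders and so yields only even-power information about $|P|$. My plan is to invoke L\"owner--Heinz operator monotonicity of $x \mapsto x^{1/2}$. From $|P|^{\pm 2m} \in op^{\pm 2m}$ for every $m \in \Z$ (obtained by iterating Lemma~\ref{lem1} with $|P|^{\pm 2}$), the operator-norm bounds $\||D|^{\mp 2m}|P|^{\pm 2m}\| \leq C_m$ give, after squaring, the form inequalities $|D|^{4m} \leq C_m^2 |P|^{4m}$ and $|P|^{4m} \leq (C_m')^2 |D|^{4m}$; L\"owner--Heinz at $t = 1/2$ then yields the two-sided form comparison $|D|^{2m} \asymp |P|^{2m}$ for all $m \in \Z$. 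With these in hand, for each $s \in \Z$ one has
\[
\bigl\||D|^{s+1}|P|^{-1}|D|^{-s}\bigr\|^2 = \bigl\||D|^{-s}|P|^{-1}|D|^{2s+2}|P|^{-1}|D|^{-s}\bigr\|,
\]
and successively sandwiching $|D|^{2s+2} \leq C_1|P|^{2s+2}$ (which collapses the middle to $C_1|P|^{2s}$) and then $|P|^{2s} \leq C_2|D|^{2s}$ (which cancels the outer $|D|^{-s}$'s) delivers a uniform bound. This yields $|P|^{-1} \in op^{-1}$ and completes the proof.
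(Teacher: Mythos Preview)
Your argument is correct and takes a genuinely different route from the paper's. For the converse, the paper never passes through $|P|^{-2}$; instead it defines the partial inverse $R = f(P)$ (equal to $P^{-1}$ off the kernel), observes that $P$ is Fredholm, and proves $R \in op^{-1}$ by a Neumann-series trick: Lemma~\ref{lem5} lets one write $QP - I = F_N - A_N$ with $F_N \in {\mathcal L}^{fr}_\infty$ and $A_N$ small in finitely many $|\cdot|_{0,s}$-seminorms, so that $(I-A_N)$ is invertible on $H^s$ for $|s|\le N$; the algebraic identity $(I-A_N)R - Q = Q(PR-I) - F_NR$ (whose right side is finite-rank because $PR-I = -P_{\Ker P}$) then gives $R = (I-A_N)^{-1}Q + (\text{finite rank})$, hence $R\colon H^s \to H^{s+1}$ is bounded for $|s|\le N$, and one lets $N\to\infty$. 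Lemma~\ref{lem3} converts $R \in op^{-1}$ into $|P|^{-1} \in op^{-1}$. Your route---establish $|P|^{-2}\in op^{-2}$ via Fredholm theory, then invoke L\"owner--Heinz to halve the exponent---is more conceptual, at the cost of importing operator monotonicity; the paper's proof stays entirely within the $op^*$-calculus and the approximation Lemma~\ref{lem5}.

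One presentational wrinkle: your final sandwiching step $|P|^{-1}|D|^{2s+2}|P|^{-1}\le C_1|P|^{2s}$ tacitly assumes that $|P|^{-1}|D|^{-s}v$ already lies in the form domain $H^{s+1}$ of $|D|^{2s+2}$, which is essentially the conclusion. But this is not a real gap, and in fact the sandwiching is redundant: the form comparison $|D|^{2m}\asymp|P|^{2m}$ you obtained via L\"owner--Heinz immediately yields $\Dom(|P|^m)=H^m$ with equivalent norms for every $m\in\Z$, and then $|P|^{-1}\colon H^s\to H^{s+1}$ is bounded because it is the tautological isometry $\Dom(|P|^s)\to\Dom(|P|^{s+1})$ in the $|P|$-norms.
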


To prove Proposition \ref{prop1}, we begin with some lemmas.

\begin{lemma} \label{lem3}
Suppose that  $P \in op^1$ is an odd self-adjoint Fredholm operator.
Then the spectrum of $P$ is disjoint
from $[-\epsilon, 0) \cup(0, \epsilon]$, for some $\epsilon > 0$.
Put $R = f(P)$, where
\begin{equation} \label{1.17}
f(t) = \begin{cases}
         1/t & \mbox{if } |t|>\epsilon, \\
         1 & \mbox{otherwise}.
       \end{cases}
\end{equation}
(This is independent of the choice of such $\epsilon$.)
Then $|P|^{-1} \in op^{-1}$ if and only if $ R\in op^{-1}$.
\end{lemma}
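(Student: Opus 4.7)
The plan is to work through the bounded self-adjoint involution
\[
S \defeq f_1(P), \qquad f_1(\lambda) = \begin{cases} \sign(\lambda), & |\lambda| > \epsilon, \\ 1, & |\lambda| \le \epsilon. \end{cases}
\]
By spectral calculus, $S^2 = I$, $S$ commutes with $R$ and $|P|^{-1}$, and the identities $R = S|P|^{-1}$ and $|P|^{-1} = SR$ hold. Hence once $S \in op^0$ is established, Lemma \ref{lem1}(1) yields the biconditional immediately by multiplying these identities on either side by $S$.

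The existence of $\epsilon$ is standard: $P$ is self-adjoint and Fredholm on $H$, so $0 \notin \sigma_{\mathrm{ess}}(P)$, and $0$ is isolated in $\sigma(P)$.

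The first substantive step, common to both directions, is to show that $\Pi_0 \defeq P_{\Ker P}$ lies in $\bigcap_k op^k$. Because $R$ and $|P|^{-1}$ both act as the identity on $\Ker P$, one has $\Pi_0 = R^k \Pi_0 = (|P|^{-1})^k \Pi_0$ for every $k \ge 1$; combined with whichever hypothesis is assumed and iterated application of Lemma \ref{lem1}(1), this gives $\Pi_0 \in op^{-k}$ for all $k$, so in particular $\Ker P \subset H^\infty$.

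The direction $|P|^{-1} \in op^{-1} \Rightarrow R \in op^{-1}$ is then easy: functional calculus gives $S = P|P|^{-1} + \Pi_0$, and Lemma \ref{lem1}(1) together with $P \in op^1$, $|P|^{-1} \in op^{-1}$, and $\Pi_0 \in op^0$ delivers $S \in op^0$.

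The main obstacle is the reverse implication. From $R \in op^{-1}$ one reads off that $R^{-1} = P + \Pi_0 \in op^1$, so by Lemma \ref{lem1}(2) the operator $R : H^s \to H^{s+1}$ is an isomorphism for each $s$; moreover $|P|^2 = P^2 + \Pi_0 \in op^2$ and $|P|^{-2} = R^2 \in op^{-2}$ are mutual inverses in the Sobolev scale. The hard step is to promote this even-degree information to the odd statement $|P|^{-1} \in op^{-1}$. I would handle it via the square-root integral representation
\[
|P|^{-1} \;=\; \frac{1}{\pi} \int_0^\infty t^{-1/2} \bigl( |P|^2 + t \bigr)^{-1} \, dt,
\]
and establish uniform-in-$t$ estimates of the resolvent $(|P|^2+t)^{-1}$ on the Sobolev scale: the spectral bound $\|(|P|^2+t)^{-1}\|_{H\to H} \le (1+t)^{-1}$ together with the $op^{-2}$ estimate for $|P|^{-2}$, propagated across $H^s$ by the resolvent identity $(|P|^2+t)^{-1} = |P|^{-2} - t(|P|^2+t)^{-1}|P|^{-2}$, interpolates to a bound of the shape $\|(|P|^2+t)^{-1}\|_{H^s \to H^{s+1}} \lesssim (1+t)^{-1/2}$ sufficient to make the integral converge in the $H^s \to H^{s+1}$ operator norm. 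This yields $|P|^{-1} \in op^{-1}$, from which $S \in op^0$ follows via $S = P|P|^{-1} + \Pi_0$ as in Case~1.
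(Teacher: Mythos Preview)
Your forward direction is fine and in fact coincides with the paper's argument: the identity $R - P|P|^{-2} = \Pi_0 \in \mathcal{L}^{fr}_\infty$ is exactly what the paper uses, and your observation that $\Ker P \subset H^\infty$ under either hypothesis is the content behind it.

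For the reverse direction, however, the integral-representation approach has a fatal gap. Even granting the bound $\|(|P|^2+t)^{-1}\|_{H^s\to H^{s+1}} \lesssim (1+t)^{-1/2}$, the resulting integral
\[
\int_0^\infty t^{-1/2}(1+t)^{-1/2}\,dt \;=\; \int_0^\infty \frac{dt}{\sqrt{t(1+t)}}
\]
\emph{diverges} logarithmically at infinity, so the Bochner integral does not converge in the $H^s\to H^{s+1}$ operator norm. This is not an artifact of a crude estimate: already in the scalar model one has $\sup_{\mu \ge c}\sqrt{\mu}/(\mu+t) = 1/(2\sqrt{t})$ for $t \ge c$, so the $(1+t)^{-1/2}$ decay is sharp and cannot be improved. (There is also the lesser issue that ``propagation across $H^s$ by the resolvent identity'' is not a self-contained mechanism; to get from $s=0$ to general $s$ you end up conjugating by powers of $|P|^{2}$ or $R$, which is already the paper's idea.)

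The paper's route sidesteps this entirely. Having established $R\in op^{-1}$ and $R^{-1} = P + \Pi_0 \in op^1$, the operators $|D|^{s+1}R^{s+1}$ and $R^{-s}|D|^{-s}$ lie in $op^0$ for every $s$. Writing
\[
|D|^{s+1}|P|^{-1}|D|^{-s} \;=\; \bigl(|D|^{s+1}R^{s+1}\bigr)\cdot\bigl(R^{-s-1}|P|^{-1}R^{s}\bigr)\cdot\bigl(R^{-s}|D|^{-s}\bigr),
\]
the middle factor collapses by functional calculus (both $R$ and $|P|^{-1}$ are functions of $P$) to $R^{-1}|P|^{-1}$, which is precisely your operator $S$ and has norm $1$ on $H$. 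This gives $|P|^{-1}\in op^{-1}$ in one line. Your framework via $S$ is correct; the point is that showing $S\in op^0$ from the hypothesis $R\in op^{-1}$ should be done by this conjugation trick rather than by the square-root integral.
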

\begin{proof}
Suppose that $|P|^{-1} \in op^{-1}$. Then $P |P|^{-2} \in op^{-1}$.
As $R - P |P|^{-2} \in {\mathcal L}^{fr}_\infty$, it follows that
$R \in op^{-1}$.

Now suppose that $R\in op^{-1}$. Since $R^{-1} - P
\in {\mathcal L}^{fr}_\infty$,
it follows that $R^{-1} \in op^1$. Given $s \in \mathbb{Z}$,
write
\begin{equation} \label{1.18}
|D|^{s+1} |P|^{-1} |D|^{-s} = \left( |D|^{s+1} R^{s+1} \right) \cdot
\left( R^{-s-1} |P|^{-1} R^{s} \right) \cdot
\left( R^{-s} |D|^{-s} \right).
\end{equation}
As $|D|^{s+1} R^{s+1} \in op^0$ and
$R^{-s} |D|^{-s} \in op^0$, they are bounded operators.
Since $R = f(P)$ we have $R^{-s-1} |P|^{-1} R^{s} = R^{-1} |P|^{-1}$,
which is bounded. Hence $|P|^{-1} \in op^{-1}$.
\end{proof}

\begin{lemma} \label{lem4}
If $Q_0$  is an odd self-adjoint element of $op^{-1}$
such that $Q_0P-1 \in {\mathcal L}^{fr}_\infty(H)$,
then $R-Q_0 \in {\mathcal L}^{fr}_\infty(H)$.
\end{lemma}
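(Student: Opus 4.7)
The plan is to reduce the statement to two finite-rank facts via the identity $PR = RP = I - \Pi$, where $\Pi$ denotes orthogonal projection onto $\ker(P)$. Concretely, I would write
\begin{equation*}
R - Q_0 \;=\; R - Q_0(PR + \Pi) \;=\; (I - Q_0 P)\,R \;-\; Q_0 \Pi,
\end{equation*}
and show that each of the two summands on the right belongs to $\mathcal{L}^{fr}_\infty(H)$.

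For the term $Q_0 \Pi$, I would first argue that $\ker(P)$ is a finite-dimensional subspace of $H^\infty$: any $v \in \ker(P)$ satisfies $v = -(Q_0 P - I)v$, which exhibits $v$ as lying in the range of $Q_0 P - I$, by hypothesis a finite-dimensional subspace of $H^\infty$. Picking an orthonormal basis $\{e_i\} \subset H^\infty$ of $\ker(P)$ presents $\Pi$ as $\sum_i e_i \langle e_i, \cdot\rangle_H$, and since $Q_0 \in op^{-1}$ preserves $H^\infty$, the composition $Q_0 \Pi = \sum_i (Q_0 e_i)\,\langle e_i, \cdot\rangle_H$ is manifestly in $\mathcal{L}^{fr}_\infty(H)$.

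For the term $(I - Q_0 P)R$, I expand $I - Q_0 P = \sum_i \xi_i \langle \eta_i, \cdot\rangle_H$ with $\xi_i,\eta_i \in H^\infty$, and use that $R$ is bounded and self-adjoint (being the real functional calculus $f(P)$) to rewrite
\begin{equation*}
(I - Q_0 P)R \;=\; \sum_i \xi_i\,\langle R\eta_i,\, \cdot\rangle_H.
\end{equation*}
Everything then reduces to verifying that $R(H^\infty) \subset H^\infty$. This is the main obstacle, since at this stage we have no pseudodifferential regularity for $R$ at our disposal — indeed $|P|^{-1} \in op^{-1}$ is precisely what the ambient Proposition \ref{prop1} is driving at, and Lemma \ref{lem3} further reduces that claim to $R \in op^{-1}$. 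I would sidestep the circularity by a bootstrap using $Q_0$: for $\eta \in H^\infty$, the identity
\begin{equation*}
R\eta \;=\; Q_0 P R\eta \;-\; (Q_0 P - I)R\eta \;=\; Q_0(\eta - \Pi\eta) \;-\; (Q_0 P - I)R\eta
\end{equation*}
exhibits $R\eta$ as a sum of two vectors in $H^\infty$ — the first because $Q_0$ and $\Pi$ both preserve $H^\infty$, the second because the range of $Q_0 P - I$ lies in $H^\infty$ — finishing the argument.
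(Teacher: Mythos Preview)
Your proof is correct and uses exactly the same decomposition as the paper: write $R - Q_0 = (I - Q_0 P)R - Q_0\Pi$ (where $\Pi = I - PR$ is the kernel projection) and check each piece is in $\mathcal{L}^{fr}_\infty$. The paper's proof is a terse three lines asserting these memberships directly.

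You go further than the paper in one respect: you isolate and resolve the point that $(I-Q_0P)R \in \mathcal{L}^{fr}_\infty$ requires $R\eta_i \in H^\infty$ for the $\eta_i$ appearing in the finite-rank expansion of $Q_0P - I$. The paper asserts this step without comment, but at this stage of the argument we do not yet know $R \in op^{-1}$ (that is the goal of the surrounding proposition), so $R(H^\infty)\subset H^\infty$ is not available for free. Your bootstrap identity $R\eta = Q_0(I-\Pi)\eta - (Q_0P - I)R\eta$, using only that $Q_0 \in op^{-1}$ preserves $H^\infty$ and that $Q_0P - I$ has range in $H^\infty$, is a clean way to close this gap. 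So: same approach, with a genuine subtlety made explicit.
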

\begin{proof}
First,
$Q_0PR-R = (Q_0 P - I)R \in {\mathcal L}^{fr}_\infty(H)$.
Note that $I-PR$ is the projection on the kernel of $P$, hence in
${\mathcal L}^{fr}_\infty$.
Then $Q_0 -Q_0PR = Q_0 (I-PR) \in {\mathcal L}^{fr}_\infty$.
The lemma follows.
\end{proof}

\begin{lemma} \label{lem5}
If $A \in op^k$ is an even operator, with $k<0$, then
$A$ is a limit of even elements of ${\mathcal L}^{fr}_\infty$,
in the $op^0$-topology.
\end{lemma}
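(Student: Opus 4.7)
The plan is to approximate $A$ by spectral truncations with respect to $|D|$. Let $E_N$ denote the spectral projection of $|D|$ onto $[0,N]$. First I would verify three properties of $E_N$: it is of finite rank (since $\theta$-summability of $D$ forces $D^2$, and hence $|D|$, to have purely discrete spectrum accumulating only at infinity); it is even (since $D^2$ is even, so is $|D| = \sqrt{D^2} + P_{\Ker(D^2)}$ and therefore so are its spectral projections); and its range is spanned by $|D|$-eigenvectors, which automatically lie in $H^\infty$. I would then set $A_N := E_N A E_N$. This is even and finite rank, and expanding $E_N = \sum_i \xi_i \langle \xi_i, \cdot \rangle_H$ in an orthonormal basis of $|D|$-eigenvectors exhibits $A_N$ in the form $\sum_j \eta_j \langle \xi_j, \cdot \rangle_H$ with $\xi_j, \eta_j \in H^\infty$, so $A_N \in {\mathcal L}^{fr}_\infty$.

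The core of the argument is to show $A_N \to A$ in the $op^0$-topology, i.e., $|A - A_N|_{0,s} \to 0$ for every $s \in \Z$. I would split
\begin{equation*}
A - A_N = (I - E_N) A + E_N A (I - E_N)
\end{equation*}
and bound each piece by commuting $|D|^{\pm s}$ past $E_N$ (possible since $E_N$ is a function of $|D|$) and extracting a factor of $|D|^k$ landing on $I - E_N$. Concretely,
\begin{equation*}
|D|^s (I - E_N) A |D|^{-s} = \bigl( (I - E_N) |D|^k \bigr) \cdot \bigl( |D|^{s-k} A |D|^{-s} \bigr),
\end{equation*}
whose operator norm is at most $N^k \cdot |A|_{k,s}$: the spectral theorem gives $\| (I - E_N) |D|^k \| \le N^k$ since $k < 0$ and $|D| > N$ on the range of $I - E_N$, and the second factor has norm $|A|_{k,s}$ by definition. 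An analogous factorization bounds the second summand by $|A|_{k, s+k} \cdot N^k$. Both bounds tend to zero as $N \to \infty$ because $k < 0$.

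I do not anticipate a genuine obstacle: the proof is a direct application of the spectral theorem combined with the fact that conjugation by powers of $|D|$ defines the Sobolev norms $|\cdot|_{k,s}$. The only place where care is needed is the bookkeeping of the shifted indices $s - k$ and $s + k$ appearing in the two estimates, but this is purely routine.
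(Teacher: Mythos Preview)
Your proof is correct and uses the same core idea as the paper: spectral truncation with respect to $|D|$. The only difference is that the paper truncates on one side only, setting $A_n = A\,\chi_n(|D|)$ after writing $A = B|D|^k$ with $B \in op^0$; then $A - A_n = B\cdot |D|^k(1-\chi_n)(|D|)$ and a single estimate $|A-A_n|_{0,s} \le |B|_{0,s}\,n^k$ suffices, avoiding the split into two terms. Your two-sided truncation $E_N A E_N$ works just as well but costs you the extra bookkeeping of the second summand $E_N A(I-E_N)$; in particular the left projection $E_N$ is not needed for membership in $\mathcal{L}^{fr}_\infty$, since $A$ already maps $H^\infty$ to $H^\infty$.
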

\begin{proof}
Write $A=B |D|^k$, with $B \in op^0$.
Put
\begin{equation} \label{1.19}
   \chi_n(t)=\begin{cases}
               1 & \mbox{if } t\le n, \\
               0 & \mbox{otherwise}.
             \end{cases}
\end{equation}
Put $A_n = A \chi_n(|D|)$.
Then for every $s \in \Z$,
\begin{align} \label{1.20}
|A-A_n|_{0,s} = &
\| |D|^s B |D|^k
\cdot (1-\chi_n)(|D|) \cdot |D|^{-s} \| \\
= & \| |D|^s B  |D|^{-s}
\cdot |D|^k (1-\chi_n)(|D|) \| \notag \\
\le & \| |D|^s B  |D|^{-s} \| \cdot \| |D|^k (1-\chi_n)(|D|) \| \notag \\
= & |B|_{0,s} \cdot \||D|^k (1-\chi_n)(|D|) \| \le |B|_{0,s} n^k. \notag
\end{align}
Hence $\lim_{n \rightarrow \infty} |A-A_n|_{0,s} = 0$.
This proves the lemma.
\end{proof}

\begin{proof}[Proof of Proposition \ref{prop1}]
Suppose that $P \in {\mathcal P}$. Then we can take $Q$ to be the
operator $R$ of Lemma \ref{lem3}.

Conversely, suppose that we have an odd self-adjoint operator
$P \in op^1$,  and $Q  \in op^{-1}$ is an odd self-adjoint operator such that
$PQ-I \in op^{-1}$ and $QP-I \in op^{-1}$.
It follows that $P$ is Fredholm, and we therefore can define
$R$ as in (\ref{1.17}).
By the proof of Lemma \ref{lem5}, for every $N \in \Z^+$, we can write
\begin{equation} \label{1.21}
QP-I= F_N - A_N,
\end{equation}
where $F_N$ and $A_N$ are even operators, $F_N \in {\mathcal L}^{fr}_\infty(H)$
and $C_N = \max \{|A_N|_{0, s} \: : \: |s| \le N\}<1$.
Then $I-A_N$ is an even invertible operator on $H$ and for all $s \in \Z$ with
$|s| \le N$,
\begin{equation} \label{1.22}
|(I-A_N)^{-1}|_{0, s} = \left| \sum_{m=0}^{\infty} A_N^m \right|_{0, s} \le
\frac{1}{1-C_N}.
\end{equation}
Equation (\ref{1.21}) implies that
\begin{equation} \label{1.23}
(I-A_N) R - Q = Q(PR-I) - F_N R.
\end{equation}
Multiplying on the left by $(I-A_N)^{-1}$ shows that
\begin{equation} \label{1.23.5}
R- (1-A_N)^{-1}Q = (I - A_N)^{-1} ( Q (PR - I) - F_NR).
\end{equation}
Note that $Q (PR - I) - F_NR \in {\mathcal L}^{fr}_\infty$.
Given $s \in \Z$, if $N$ is sufficiently large then the
right-hand side of (\ref{1.23.5}) is a bounded operator from
$H^s$ to $H^{s+1}$.
Hence the operator
\begin{equation} \label{1.24}
|D|^{s+1} R
|D|^{-s} - \left( |D|^{s+1} (1-A_N)^{-1}
|D|^{-s-1} \right) \left( |D|^{s+1} Q
|D|^{-s} \right),
\end{equation}
defined originally on $H^\infty$,
extends to an odd bounded operator on $H$.
For $N$ sufficiently large, we know that $|D|^{s+1} (I - A_N)^{-1}
|D|^{-s-1}$ extends to a bounded operator on $H$. Also,
for any $s \in \Z$, the operator $|D|^{s+1} Q
|D|^{-s}$ extends to a bounded operator on $H$.
Therefore $|D|^{s+1} R
|D|^{-s}$ extends to an odd bounded operator on $H$
and $R \in op^{-1}$.
Lemma \ref{lem3} now implies the proposition.
\end{proof}

\begin{corollary} \label{cor1} If $P \in \mathcal{P}$, and $A \in op^0$ is an
odd self-adjoint operator,
then $P+A \in \mathcal{P}$.
\end{corollary}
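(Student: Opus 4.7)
The plan is to apply Proposition \ref{prop1}, which gives an equivalent characterization of $\mathcal{P}$ in terms of an approximate inverse. The idea is that adding a zero-order perturbation should not destroy the existence of such an approximate inverse, since the correction term can be absorbed into the $op^{-1}$ error.

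First I would check that $P+A$ satisfies the basic structural requirements: it is odd (sum of odd operators), self-adjoint (sum of self-adjoint operators), and lies in $op^1$, since $op^0 \subset op^1$ and $op^1$ is closed under addition. So $P+A$ is an odd self-adjoint element of $op^1$, as required by Definition \ref{def1}.

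Next, since $P \in \mathcal{P}$, Proposition \ref{prop1} supplies an odd self-adjoint operator $Q \in op^{-1}$ with $PQ - I \in op^{-1}$ and $QP - I \in op^{-1}$. I would simply take this same $Q$ as a candidate approximate inverse for $P+A$. Using Lemma \ref{lem1}(1) and the fact that $op^k$ is closed under addition, I compute
\begin{equation}
(P+A)Q - I = (PQ - I) + AQ,
\end{equation}
where $PQ - I \in op^{-1}$ by hypothesis and $AQ \in op^{0} \cdot op^{-1} \subset op^{-1}$. Hence $(P+A)Q - I \in op^{-1}$. Symmetrically,
\begin{equation}
Q(P+A) - I = (QP - I) + QA \in op^{-1}.
\end{equation}
Thus $Q$ is still an odd self-adjoint element of $op^{-1}$ that inverts $P+A$ modulo $op^{-1}$. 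Applying Proposition \ref{prop1} in the converse direction to $P+A$ then yields $P+A \in \mathcal{P}$.

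There is really no hard step here; the whole point of the equivalent characterization established in Proposition \ref{prop1} is precisely to make stability under bounded-order perturbations transparent. The only thing to be careful about is checking that the error terms stay in $op^{-1}$, which follows immediately from the composition rule $op^{k_1} \cdot op^{k_2} \subset op^{k_1+k_2}$ of Lemma \ref{lem1}(1).
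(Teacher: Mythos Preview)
Your proof is correct and follows essentially the same approach as the paper: both invoke Proposition~\ref{prop1} to obtain an approximate inverse $Q$ for $P$, observe that $AQ$ and $QA$ lie in $op^{-1}$, and then apply Proposition~\ref{prop1} in the reverse direction. You have simply spelled out a few details (oddness, self-adjointness, membership in $op^1$, and the use of Lemma~\ref{lem1}(1)) that the paper leaves implicit.
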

\begin{proof}
From Proposition \ref{prop1},
there is some odd self-adjoint $Q \in op^{-1}$ so that
$PQ - I \in op^{-1}$ and $QP - I \in op^{-1}$.
Then $(P+A)Q- I \in op^{-1}$ and $Q(P+A) - I \in op^{-1}$. The
corollary now follows from Proposition \ref{prop1}.
\end{proof}

\subsection{Duhamel formula} \label{subsect1.3}

We say that a map from $\R^n$ to ${\mathcal P}$ is smooth if
the composite map $\R^n \rightarrow {\mathcal P} \subset op^1$ is
smooth with respect to the Fr\'echet topology on $op^1$.

\begin{proposition} \label{prop2}
If $f \colon \mathbb{R}^n \to \mathcal{P}$ is smooth, then $x \mapsto e^{-\epsilon f(x)^2}$ is a smooth
map from $\mathbb{R}^n$ to $\mathcal{L}^1(H)$.
\end{proposition}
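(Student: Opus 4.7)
The plan is to prove smoothness by induction on the order of derivative, using a Duhamel-type formula to express each derivative of $e^{-\epsilon f(x)^2}$ as an iterated integral of products involving heat kernels $e^{-t f(x)^2}$ and derivatives of $f$. The central technical ingredient is Lemma \ref{lem2}(3): any product of the form $F_1 e^{-tP^2} F_2$, with $F_j \in op^{k_j}$ and $t$ bounded below, is trace class with controlled $\mathcal{L}^1$-norm.

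First, I would establish the pointwise Duhamel formula: for $P_0, P_1 \in \mathcal{P}$ with $P_s = (1-s) P_0 + s P_1 \in \mathcal{P}$ (by Corollary \ref{cor1}), one has
\begin{equation*}
e^{-\epsilon P_1^2} - e^{-\epsilon P_0^2} = -\int_0^\epsilon e^{-u P_1^2} \bigl(P_1^2 - P_0^2\bigr) e^{-(\epsilon - u) P_0^2} \, du,
\end{equation*}
where $P_1^2 - P_0^2 = (P_1 - P_0) P_0 + P_1 (P_1 - P_0) \in op^2$ is split so that each $P_j$ factor sits on the side of its own heat kernel. I would verify this by differentiating $g(u) = e^{-u P_1^2} e^{-(\epsilon - u) P_0^2}$ on the core $H^\infty$ via the spectral theorem, then upgrading to an identity in $\mathcal{L}^1(H)$ using (\ref{1.11}): each term in the integrand has the form $e^{-a P_1^2} P_1 \cdot (P_1 - P_0) \cdot P_0 e^{-b P_0^2}$ or similar, and absorbing each $P_j$ into the adjacent heat kernel via the spectral bound (\ref{1.16}) leaves a single factor $P_1 - P_0 \in op^1$ between two trace-class heat kernels, giving a uniform $\mathcal{L}^1$-bound.

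Second, applying this with $P_0 = f(x)$ and $P_1 = f(x + h e_i)$, dividing by $h$, and passing to the limit, I would show that the partial derivative exists in $\mathcal{L}^1(H)$ and equals
\begin{equation*}
\partial_{x_i} e^{-\epsilon f(x)^2} = -\int_0^\epsilon e^{-u f(x)^2} \bigl(f(x) \, \partial_{x_i} f(x) + \partial_{x_i} f(x) \cdot f(x)\bigr) e^{-(\epsilon - u) f(x)^2} \, du.
\end{equation*}
Convergence reduces to showing that $h^{-1}(P_1^2 - P_0^2)$ approaches $f \, \partial_{x_i} f + \partial_{x_i} f \cdot f$ in the appropriate $op^1$-norms, which follows from smoothness of $f$ into $op^1$ together with the product estimate of Lemma \ref{lem1}(1); continuity of the resulting integral in $x$ is proved by the same mechanism applied to differences.

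Third, I would iterate. Differentiating the integral formula once more, via a Duhamel expansion inside each heat kernel factor, produces a sum of integrals over simplices of products of the form
\begin{equation*}
e^{-t_0 f(x)^2} A_1(x) \, e^{-t_1 f(x)^2} \, A_2(x) \, \cdots \, A_m(x) \, e^{-t_m f(x)^2},
\end{equation*}
with each $A_j(x)$ a polynomial in $f(x)$ and its partial derivatives, lying in some $op^{k_j}$. Distributing each $f(x)$ factor into an adjacent heat kernel via (\ref{1.16}) and then appealing to Lemma \ref{lem2}(3) and (\ref{trace}), every such integrand is trace class with $\mathcal{L}^1$-norm bounded uniformly on compact subsets of $\mathbb{R}^n$. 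This yields the existence, continuity, and uniform bounds for every partial derivative as a map into $\mathcal{L}^1(H)$, proving smoothness. The main obstacle is the first step: rigorously justifying the Duhamel formula in the unbounded setting, because $P_1^2 - P_0^2 \in op^2$ must be split and each intermediate expression interpreted as a genuine trace-class operator on $H$ rather than only as an operator between Sobolev scales; once this pointwise formula and its $\mathcal{L}^1$ estimates are secured, the remainder is induction and bookkeeping.
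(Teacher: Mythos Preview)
Your proposal is correct but takes a genuinely different route from the paper's proof. The paper establishes the derivative formula by passing through resolvents: it first proves that $v \mapsto (\lambda - f(v)^2)^{-1}$ is differentiable as a map into $op^{-2}$ (an elementary algebraic computation with bounded operators), then uses the contour integral representation
\[
e^{-\epsilon f(u)^2} = \frac{1}{2\pi i}\int_\Gamma e^{-\epsilon\lambda}(\lambda - f(u)^2)^{-1}\,d\lambda
\]
to transfer differentiability to the heat semigroup, and finally identifies the resulting $\lambda$-integral with the Duhamel $\sigma$-integral by an eigenfunction calculation. In particular the paper never writes down the two-operator (mixed) Duhamel formula; it arrives directly at the one-operator formula $\partial_u e^{-\epsilon f(u)^2} = -\int_0^\epsilon e^{-\sigma f^2}(\partial_u f^2)e^{-(\epsilon-\sigma)f^2}\,d\sigma$.

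The trade-off is this. The resolvent route keeps every intermediate object bounded (resolvents lie in $op^{-2}$), so Fubini, limit interchange, and the justification of the contour integral are all routine; the price is the extra machinery of the contour representation and the eigenfunction verification. Your route is more direct and avoids contour integrals entirely, but it carries a hidden bootstrap: to pass the limit $h\to 0$ through your mixed Duhamel integral you must already know that $e^{-uP_1^2}\to e^{-uP_0^2}$ in a sense strong enough for the $\mathcal{L}^1$-estimate (with suitable uniformity in $u$), and this is obtained by feeding the mixed Duhamel formula back into itself---which is what you are gesturing at with ``the same mechanism applied to differences,'' but it deserves to be said explicitly. Both approaches handle the endpoint singularity of the $u$-integral the same way in spirit: you absorb each order-one factor into the adjacent heat kernel via (\ref{1.16}); the paper splits the $\sigma$-integral at $\epsilon/2$ and shifts a factor $e^{-\frac{\epsilon}{2}f^2}$ across.
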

\begin{proof}
Consider first the case when $n=1$.
Let $f : \R \rightarrow \mathcal{P}$ be a smooth
map, parametrized by $u \in \R$. We claim that for $u_1 < u_2$, we have
\begin{equation} \label{1.25}
e^{- \epsilon f(u_2)^2} - e^{- \epsilon f(u_1)^2} = -
\int_{u_1}^{u_2} \int_0^{\epsilon}
e^{- \sigma f(v)^2} \frac{df(v)^2}{dv} e^{- (\epsilon - \sigma)
f(v)^2} \: d\sigma \: dv.
\end{equation}
To give meaning to the integral over $\sigma$, we rewrite it as
\begin{align} \label{1.26}
& \int_0^{\epsilon}
e^{- \sigma f(v)^2} \frac{df(v)^2}{dv} e^{- (\epsilon - \sigma)
f(v)^2} \: d\sigma = \\
& \int_0^{\frac{\epsilon}{2}}
e^{- \sigma f(v)^2} \left( \frac{df(v)^2}{dv} e^{- \frac{\epsilon}{2}
f(v)^2} \right)
e^{- \left( \frac{\epsilon}{2} - \sigma \right)
f(v)^2} \: d\sigma + \notag \\
& \int_{\frac{\epsilon}{2}}^{\epsilon}
e^{- \left( \sigma - \frac{\epsilon}{2} \right) f(v)^2}
\left( e^{- \frac{\epsilon}{2}
f(v)^2} \frac{df(v)^2}{dv} \right) e^{- (\epsilon - \sigma)
f(v)^2} \: d\sigma. \notag
\end{align}
Using Lemma \ref{lem2}.(3),
the integrands in the last two integrals are
continuous as maps from $\left[ 0, \frac{\epsilon}{2} \right]$
(or $\left[ \frac{\epsilon}{2}, \epsilon \right]$) to
${\mathcal L}^1(H)$.

To prove (\ref{1.25}), we first prove the corresponding statement for
resolvents.  For $\lambda \in \C - \R^{\ge 0}$,  and $- \infty < v_1 < v_2
< \infty$,
we have
\begin{align} \label{1.27}
& (\lambda - f(v_2)^2)^{-1} - (\lambda - f(v_1)^2)^{-1} = \\
& (\lambda - f(v_2)^2)^{-1}  \cdot
(f(v_2)^2 - f(v_1)^2) \cdot
(\lambda - f(v_1)^2)^{-1}. \notag
\end{align}
Then\begin{align} \label{1.28}
& \frac{(\lambda - f(v_2)^2)^{-1} - (\lambda - f(v_1)^2)^{-1}}{v_2 - v_1} \: -  \\
& (\lambda - f(v_2)^2)^{-1}  \cdot
\frac{df(v)^2}{dv} \Big|_{v=v_1} \cdot
(\lambda - f(v_1)^2)^{-1} = \notag \\
&  (\lambda - f(v_2)^2)^{-1} \cdot
\left( \frac{f(v_2)^2 - f(v_1)^2}{v_2 - v_1} -
\frac{df(v)^2}{dv} \Big|_{v=v_1}
\right) \cdot \notag \\
& (\lambda - f(v_1)^2)^{-1}. \notag
\end{align}
By assumption, $f \: : \: \R \rightarrow op^1$ is differentiable, so
$f^2 \: : \: \R \rightarrow op^2$ is differentiable.
Using the fact that $H^s$ can be defined using $f(v) \in
{\mathcal P}$ instead of $D$, it follows that
$(\lambda - f(v)^2)^{-1} \in op^{-2}$.
From (\ref{1.27}),
\begin{equation} \label{1.29}
(\lambda - f(\cdot)^2)^{-1} \: : \:
\R \rightarrow op^{-2}
\end{equation}
is continuous.
Then (\ref{1.28}) implies that (\ref{1.29})
is differentiable and the derivative is given by
\begin{equation} \label{1.30}
\frac{d}{dv} (\lambda - f(v)^2)^{-1} =
(\lambda - f(v)^2)^{-1} \frac{df(v)^2}{dv} (\lambda - f(v)^2)^{-1}.
\end{equation}
Hence
\begin{align} \label{1.31}
& (\lambda - f(u_2)^2)^{-1} - (\lambda - f(u_1)^2)^{-1} = \\
& \int_{u_1}^{u_2}
(\lambda - f(v)^2)^{-1} \frac{df(v)^2}{dv} (\lambda - f(v)^2)^{-1} \: dv,
\notag
\end{align}
where the integrand is a continuous map from $[u_1, u_2]$ to
$op^{-2}$.

Put $\Gamma = \{(|t| - 1, t) \: : \: t \in \R \}$, a parametrized curve in
the complex plane. By the spectral theorem,
\begin{equation} \label{1.32}
e^{- \epsilon f(u)^2} = \frac{1}{2 \pi i} \int_{\Gamma}
e^{- \epsilon \lambda} (\lambda - f(u)^2)^{-1} \: d\lambda.
\end{equation}
Then
\begin{align} \label{1.33}
& e^{- \epsilon f(u_2)^2} - e^{- \epsilon f(u_1)^2} = \\
& \frac{1}{2 \pi i} \int_{\Gamma} \int_{u_1}^{u_2}
e^{- \epsilon \lambda}
(\lambda - f(v)^2)^{-1} \frac{df(v)^2}{dv} (\lambda - f(v)^2)^{-1}
\: dv \: d\lambda
 = \notag \\
& \frac{1}{2 \pi i} \int_{\Gamma} \int_{u_1}^{u_2}
e^{- \epsilon \lambda}
\frac{I + f(v)^2}{\lambda - f(v)^2}
(I + f(v)^2)^{-1}
 \frac{df(v)^2}{dv}
(I + f(v)^2)^{-1} \cdot \notag \\
& \frac{I + f(v)^2}{\lambda - f(v)^2}.
\: dv \: d\lambda \notag
\end{align}
The spectral theorem gives a uniform bound on
$\Big\| \frac{I + f(v)^2}{\lambda - f(v)^2} \Big\|$ for
$v \in \R$ and $\lambda \in \Gamma$.
Also, $\| I + f(v)^2)^{-1}
 \frac{df(v)^2}{dv}
(I + f(v)^2)^{-1} \|$ is uniformly bounded for $v \in [u_1, u_2]$.
Combined with the exponential decay
of $e^{- \epsilon \lambda} = e^{- \epsilon (|t| - 1 + it)}$
as $t \rightarrow \pm \infty$,
one can justify switching the order of integration to obtain
\begin{align} \label{1.34}
& e^{- \epsilon f(u_2)^2} - e^{- \epsilon f(u_1)^2} = \\
& \frac{1}{2 \pi i}  \int_{u_1}^{u_2} \int_{\Gamma}
e^{- \epsilon \lambda}
(\lambda - f(v)^2)^{-1} \frac{df(v)^2}{dv} (\lambda - f(v)^2)^{-1}
\: d\lambda \: dv. \notag
\end{align}

We claim that
\begin{align} \label{1.35}
& \frac{1}{2 \pi i} \int_{\Gamma}
e^{- \epsilon \lambda}
(\lambda - f(v)^2)^{-1} \frac{df(v)^2}{dv} (\lambda - f(v)^2)^{-1}
\: d\lambda = \\
& - \int_0^{\epsilon} e^{- \sigma f(v)^2} \frac{df(v)^2}{dv}
e^{- (\epsilon - \sigma) f(v)^2} \: d\sigma. \notag
\end{align}
To see this, let $e_1$ and $e_2$ be eigenfunctions of $f(v)^2$, with
eigenvalues $\lambda_1$ and $\lambda_2$, respectively. Then
 \begin{align} \label{1.36}
& \frac{1}{2 \pi i} \int_{\Gamma}
e^{- \epsilon \lambda}  \langle e_1,
(\lambda - f(v)^2)^{-1} \frac{df(v)^2}{dv} (\lambda - f(v)^2)^{-1}
 \: e_2 \rangle
\: d\lambda \: = \\
& \frac{1}{2 \pi i} \int_{\Gamma}
e^{- \epsilon \lambda} (\lambda - \lambda_1)^{-1}
(\lambda - \lambda_2)^{-1} \: \langle e_1,
\frac{df(v)^2}{dv}  \: e_2 \rangle \: d\lambda
\:  =  \notag \\
& \frac{e^{-\epsilon \lambda_2} - e^{-\epsilon \lambda_1}}{\lambda_2 -
\lambda_1} \langle e_1,
\frac{df(v)^2}{dv}  \: e_2 \rangle, \notag
\end{align} 
where
$\frac{e^{-\epsilon \lambda_2} - e^{-\epsilon \lambda_1}}{\lambda_2 -
\lambda_1}$ is taken to be $- \epsilon e^{- \epsilon \lambda_1}$ if
$\lambda_2 = \lambda_1$.
On the other hand,
\begin{align} \label{1.37}
& \int_0^{\epsilon} \langle e_1, e^{- \sigma f(v)^2} \frac{df(v)^2}{dv}
e^{- (\epsilon - \sigma) f(v)^2} e_2 \rangle \: d\sigma = \\
& \int_0^{\epsilon} e^{- \sigma \lambda_1} e^{- (\epsilon - \sigma) \lambda_2}
\: \langle e_1,  \frac{df(v)^2}{dv}
 e_2 \rangle \: d\sigma  \: = \notag \\
& - \: \frac{e^{-\epsilon \lambda_2} - e^{-\epsilon \lambda_1}}{\lambda_2 -
\lambda_1} \langle e_1,  \frac{df(v)^2}{dv}
 e_2 \rangle, \notag
\end{align}
which proves the claim.

This proves (\ref{1.25}). It follows that
$u \rightarrow e^{- \epsilon f(u)^2}$
is differentiable as a map into ${\mathcal L}^1(H)$, with derivative
\begin{equation} \label{1.38}
\frac{d}{du} e^{- \epsilon f(u)^2}
= - \int_0^{\epsilon}
e^{- \sigma f(u)^2} \frac{df(u)^2}{du} e^{- (\epsilon - \sigma)
f(u)^2} \: d\sigma.
\end{equation}
If $f \: : \: \R^n \rightarrow {\mathcal P}$ is
a smooth map then precomposing with smooth maps $\R \rightarrow \R^n$,
we see that $x \rightarrow e^{- \epsilon f(x)^2}$ is differentiable as
a map from $\R^n$ to ${\mathcal L}^1(H)$. By a similar argument, we can take
more derivatives and see that $x \rightarrow e^{- \epsilon f(x)^2}$ is
a smooth map from $\R^n$ to ${\mathcal L}^1(H)$.
\end{proof}

\section{Superconnections} \label{sect2}

In this section we develop the theory of superconnections on Hilbert
bundles.
In Subsection \ref{subsect2.1} we define the relevant
class of Hilbert bundles and specify, in particular, the structure
group.  In Subsection \ref{subsect2.2} we define superconnections on
such Hilbert bundles,
using the pseudodifferential operators of the previous section.
We construct Chern characters and eta forms.  We prove additivity
results for eta forms.

\subsection{Structure group} \label{subsect2.1}

Put $G = U(H) \cap op^0$.

\begin{lemma} \label{lem6}
$G$ is a group.
\end{lemma}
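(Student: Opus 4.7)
The plan is straightforward: I verify the three group axioms for $G = U(H) \cap op^0$, relying entirely on the composition and adjoint properties of $op^0$ already established in Lemma \ref{lem1}.

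First, for closure under multiplication: if $g_1, g_2 \in G$, then $g_1 g_2$ is unitary (since $U(H)$ is a group under composition), and by Lemma \ref{lem1}(1) applied with $k_1 = k_2 = 0$, the product $g_1 g_2$ lies in $op^0$. Hence $g_1 g_2 \in U(H) \cap op^0 = G$.

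Second, the identity $I$ is obviously unitary, preserves $H^\infty$, and acts as the bounded identity operator $H^s \to H^s$ for every $s \in \Z$, so $I \in op^0$ and therefore $I \in G$.

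Third, for inverses: if $g \in G$, then since $g$ is unitary we have $g^{-1} = g^*$. By Lemma \ref{lem1}(3), $g^* \in op^0$, and $g^*$ is of course unitary. Thus $g^{-1} \in U(H) \cap op^0 = G$. This completes the verification that $G$ is a group. There is no real obstacle here; the lemma is essentially a bookkeeping consequence of the fact that $op^0$ is closed under composition and under taking adjoints, both of which are already in hand.
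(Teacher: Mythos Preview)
Your proof is correct and follows the same approach as the paper: the key point is that for $g \in G$ the inverse $g^{-1} = g^*$ lies in $op^0$ by Lemma~\ref{lem1}(3). The paper's proof records only this step and leaves closure and the identity implicit, whereas you spell them out explicitly using Lemma~\ref{lem1}(1); that is a harmless addition, not a different method.
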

\begin{proof}
If $g \in G$ then
Lemma \ref{lem1}.(3) gives that $g^* \in op^0$,
so $g^*$ is an inverse of $g$ in $G$.
\end{proof}

We now put a smooth structure on $G$. Since we will be considering
principal $G$-bundles over finite dimensional manifolds, it suffices
to give a notion of smooth maps from  open subsets of Euclidean spaces
to $G$, i.e. plots in the sense of diffeology.  A reference for
diffeology is the book \cite{Iglesias (2013)}. A brief introduction
is in \cite[Appendix A]{Blohmann-Weinstein (2013)}.

The smooth structure on $G$ that we take
is such that the adjoint action of $G$ on
$op^*$ is smooth and the action of $G$ on $H^*$ is smooth. The
precise definition is the following. (We define
smooth maps to $op^k$ using the Fr\'echet structure on $op^k$, and
to $H^s$ using the Hilbert space structure on $H^s$.)

In the rest of the paper, we fix
a number $K \in \N$; its role will eventually be to
bound the pseudodifferential order of the connection form.
Taking $K$ large allows more flexibility.  The results of the
paper, such as Theorems \ref{thm1} and
\ref{newtheorem2}, will hold independent
of $K$.

\begin{definition} \label{def2}
If $U$ is an open subset of
$\R^n$ then a map $g : U \rightarrow G$ is a plot if
\begin{enumerate}
\item For any smooth map $F : U \rightarrow op^k$, the maps
$gFg^{-1} : U \rightarrow op^k$ and $g^{-1}Fg : U \rightarrow op^k$ are
smooth.
\item For any smooth map $v : U \rightarrow H^s$, the maps
$g v : U \rightarrow H^s$ and $g^{-1} v : U \rightarrow H^s$ are smooth.
\item
There is a smooth map $X : U \rightarrow
(\R^n)^* \otimes op^K$
so that for any smooth map $v : U \rightarrow H^s$, we have
$g^{-1} d(gv) = dv + Xv$ in $\Omega^1 \left( U; H^{s-K} \right)$.
\end{enumerate}
\end{definition}

It is straightforward to see that this defines a diffeology on $G$.
Let $M$ be a manifold and let $q : P \rightarrow M$
be a smooth principal $G$-bundle
\cite[Chapter 8.11]{Iglesias (2013)}.
Form the associated Hilbert bundle $\mathcal{H} = P \times_G H$
\cite[Chapter 8.16]{Iglesias (2013)}.
We can find a covering $\{U_\alpha\}$ of $M$ by open sets, diffeomorphic
to open subsets of $\R^n$, so that
$q^{-1}(U_\alpha)$ is $G$-diffeomorphic to $U_\alpha \times G$
\cite[Chapter 8.13]{Iglesias (2013)}. Let $g_{\alpha \beta} :
U_\alpha \cap U_\beta \rightarrow G$ be the transition map.
By a connection on ${\mathcal H}$, we will mean a collection of
$1$-forms
$A_\alpha \in \Omega^1(U_\alpha; op^K)$
satisfying
\begin{equation} \label{2.1}
A_\alpha = g_{\alpha \beta}^{-1} A_\beta g_{\alpha \beta} +
X_{\alpha \beta},
\end{equation}
where $X_{\alpha \beta}$ comes from Definition \ref{def2}.(3).
Because of our notion of smooth structure on $G$,
we can talk about the space
$\bigoplus_{p \ge 0, k \in \Z} \Omega^p(M; op^k(\mathcal{H}))$
of smooth $op^*$-valued differential forms on $M$.
Since $G$ preserves the space ${\mathcal P}$ from Definition \ref{def1},
and acts smoothly on it,
we can also talk about $\Omega^0
\left( M; {\mathcal P}({\mathcal H}) \right)$.

\begin{example} \label{ex3}
If $H$ is finite dimensional then $G = U(N)$ and
${\mathcal H}$ is a finite dimensional unitary vector bundle.
\end{example}

\begin{example} \label{ex4}
Let $Z$ be a compact manifold.  Let ${\mathcal D}^{\frac12}$
be the half-density
line bundle on $Z$. Let $V$ be a Hermitian vector bundle
on $Z$. Let $L$ be the group of Hermitian isomorphisms of $V$ to
itself.  We do not assume that the elements of $L$ cover the identity
diffeomorphism of $Z$.
Put $H = L^2(Z; {\mathcal D}^{\frac12} \otimes V)$. Then
there is a homomorphism $L \rightarrow U(H)$.
We give $L$ the smooth topology.
Note that if $\dim(Z) > 0$
then the
homomorphism will not be continuous if we give $U(H)$ the
topology coming from the norm topology on $B(H)$.

Suppose now that $Z$ is even dimensional and
$V$ is a Clifford module.  In particular, $Z$
acquires a Riemannian metric.  Let $D$ be the associated Dirac-type
operator. Putting $G = U(H) \cap op^0$ as before, with its
diffeological structure,
there is a homomorphism $\rho : L \rightarrow G$
that is smooth in the sense that if $U$ is an open subset of $\R^n$, and
$\alpha : U \rightarrow L$ is a smooth map, then $\rho \circ \alpha$
is a plot for $G$.

Now let $\pi : Q \rightarrow M$ be a fiber bundle with connected base $M$ and
compact even dimensional fibers.
Let $E$ be a Hermitian vector bundle on
$Q$. Given $m \in M$, put $Z_m = \pi^{-1}(m)$ and put $V_m = E \Big|_{Z_m}$.
Choose $m_0 \in M$ and let $L$ be the Hermitian isomorphisms of
$V_{m_0}$.
Then $(Q,E)$ is associated to some
principal $L$-bundle $P \rightarrow M$, using
the action of $L$ on $V_{m_0}$.

Suppose that $E$ is a Clifford
module with connection,
in the sense of
\cite[Section 10.2]{Berline-Getzler-Vergne (1996)}.
Construct $H$ and $D$ as above, using $Z_{m_0}$ and $V_{m_0}$.
Put ${\mathcal H}_m = L^2(Z_m; {\mathcal D}_m^{\frac12} \otimes V_m)$.
Then $\{{\mathcal H}_m\}_{m \in M}$ are the fibers of a
Hilbert bundle ${\mathcal H}$ associated to $P$ using the
representation $\rho$. The smooth sections of ${\mathcal H}
\rightarrow M$ are the
same as the smooth sections of $({\mathcal D}^VQ)^{\frac12} \otimes E
\rightarrow Q$,
where $({\mathcal D}^VQ)^{\frac12}$ is the line bundle on $Q$ of
vertical half-densities.
\end{example}

\subsection{Chern character and eta form} \label{subsect2.2}

\begin{definition} \label{def3}
Let ${\mathcal H} = {\mathcal H}^+ \oplus {\mathcal H}^-$
be a $\Z_2$-graded Hilbert bundle on $M$, in the sense of
the previous subsection.
A superconnection on ${\mathcal H}$ is a sum
\begin{equation} \label{2.2}
\A =\A_{[0]}+\A_{[1]}+ \A_{[2]} +\ldots
\end{equation}
where
\begin{enumerate}
\item
$\A_{[0]} \in \Omega^0(M; \mathcal{P})$,
\item
$\A_{[1]}$ is a connection on $\mathcal{H}$
and
\item For $i \ge 2$, we have
$\A_{[i]} \in \Omega^i(M, op^{k_i}(\mathcal{H}))$ for some $k_i \in \Z$,
and $\A_{[i]}$ has total parity $-1$ (including the parity $(-1)^i$ of
the exterior algebra part).
\end{enumerate}
\end{definition}

\begin{example} \label{ex5}
In the fiber bundle setting of Example \ref{ex4},
the Bismut superconnection
$\A_{Bismut}$
\cite[Section 10.3]{Berline-Getzler-Vergne (1996)} is an example of a
superconnection on ${\mathcal H}$.
\end{example}

As usual, we define $e^{- \: \A^2}$ by doing a Duhamel expansion
around $e^{- \: \A_{[0]}^2}$.  Because of the nilpotency of
$\Omega^{\ge 1}(M)$, the expansion has a finite number of terms.

\begin{lemma} \label{lem7}
\begin{enumerate}
\item
For any $X \in \Omega^*(M; op^*({\mathcal H}))$, we have that
$X e^{- \: \A^2}$ and
$e^{- \: \A^2} X$ lie in $\Omega^*(M; {\mathcal L}^1({\mathcal H}))$, and
\begin{equation} \label{closed}
d \Str \left( X e^{- \: \A^2} \right) =
\Str \left( [\A, X] e^{- \: \A^2} \right).
\end{equation}
\item $\Ch(\A) = \Str e^{- \: \A^2}$ is a closed form on $M$.
\end{enumerate}
\end{lemma}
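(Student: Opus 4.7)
The plan is to expand $e^{-\A^2}$ via a Duhamel formula around its degree-zero part $\A_{[0]}^2$, reducing everything to the estimates of Section~\ref{sect1}. Write $\A^2 = \A_{[0]}^2 + R$ with $R = \A^2 - \A_{[0]}^2 \in \bigoplus_{p \ge 1} \Omega^p(M; op^*({\mathcal H}))$. Because $R$ is of positive form degree on the finite-dimensional manifold $M$, the formal Duhamel expansion
\[
e^{-\A^2} = \sum_{k=0}^{\dim M} (-1)^k \int_{\Delta_k}
e^{-\sigma_0 \A_{[0]}^2} R\, e^{-\sigma_1 \A_{[0]}^2} R \cdots R\, e^{-\sigma_k \A_{[0]}^2}\, d\sigma
\]
is a finite sum, where $\Delta_k = \{\sigma_i \ge 0,\ \sum \sigma_i = 1\}$ is the standard $k$-simplex.

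For part~(1), first I would handle the trace-class claim. Each Duhamel summand of $Xe^{-\A^2}$ is a finite integral of a product of smooth $op^*$-valued forms separated by factors $e^{-\sigma_i \A_{[0]}^2}$. Since at least one $\sigma_i$ satisfies $\sigma_i \ge \frac{1}{k+1}$, I can split that factor symmetrically and apply Lemma~\ref{lem2}.(3) to absorb the surrounding $op^*$ pieces into $\mathcal{L}^1$ with integrable bounds; hence each integrand, and then the integral, lies in $\Omega^*(M; \mathcal{L}^1({\mathcal H}))$, and likewise for $e^{-\A^2}X$.

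Next I would derive the formula $d\,\Str(Xe^{-\A^2}) = \Str([\A, X]e^{-\A^2})$. The strategy rests on two general identities. The first is $[\A, e^{-\A^2}] = 0$, which follows from $[\A, \A^2] = 0$ (a consequence of $\A$ having odd total parity together with associativity) by commuting $\A$ past each factor in the Duhamel expansion using the subsidiary identity
\[
[\A, e^{-\sigma \A_{[0]}^2}] = -\int_0^\sigma e^{-(\sigma - \tau)\A_{[0]}^2}\,[\A, \A_{[0]}^2]\, e^{-\tau \A_{[0]}^2}\,d\tau.
\]
The second is the general rule $d\,\Str(Y) = \Str([\A, Y])$ for any $Y = Xe^{-\A^2}$. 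I would verify it in a local trivialization, writing $\A = d + \omega$ with $\omega$ an $op^*$-valued form, so that $d\,\Str(Y) = \Str(dY)$ (interchange of $d$ with both the Duhamel integral and $\Str$ being justified by Proposition~\ref{prop2} applied in the base direction), and then $\Str(dY - [\A, Y]) = -\Str([\omega, Y]) = 0$ by the cyclicity half of Lemma~\ref{lem2}.(3), applicable termwise because every Duhamel summand contains a trace-absorbing factor $e^{-\epsilon \A_{[0]}^2}$. Combining the two identities via graded Leibniz,
\[
[\A, Xe^{-\A^2}] = [\A, X]e^{-\A^2} + (-1)^{|X|} X[\A, e^{-\A^2}] = [\A, X]e^{-\A^2},
\]
yields part~(1). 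Part~(2) is the special case $X = I$, since $[\A, I] = 0$.

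The main obstacle, as I see it, is the rigorous interchange of $d$, the simplex integrals, and $\Str$, together with cyclicity of $\Str$ on products involving unbounded operators. Both reduce to the same observation: each Duhamel summand carries a factor $e^{-\epsilon \A_{[0]}^2}$ with $\epsilon \ge \frac{1}{2(k+1)}$, and Lemma~\ref{lem2}.(3) together with Proposition~\ref{prop2} supply exactly the trace-class absorption, the cyclicity, and the smoothness required to push the manipulations through.
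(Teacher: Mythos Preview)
Your proposal is correct and follows essentially the same route as the paper: both use the Duhamel expansion around $\A_{[0]}^2$, the pigeonhole observation that some $\sigma_i \ge \frac{1}{k+1}$ together with Lemma~\ref{lem2}.(3) for the trace-class claim, and then the standard argument (which the paper simply cites from \cite[Lemma~9.15]{Berline-Getzler-Vergne (1996)}) combining $[\A,e^{-\A^2}]=0$ with $d\,\Str(Y)=\Str([\A,Y])$ via cyclicity to obtain \eqref{closed}. Your write-up is in fact more detailed than the paper's, which defers the algebraic portion entirely to the reference.
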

\begin{proof}
Expanding $e^{- \: \A^2} \in \Omega^*(M; op^*)$ around
$e^{- \: \A_{[0]}^2}$ in a Duhamel expansion shows that the component in
$\Omega^i(M; op^*)$ is a finite sum of terms of the form
\begin{equation} \label{2.3}
\int_{\Delta_k} e^{- \: t_0 \: \A_{[0]}^2} F_1 e^{- \: t_1 \: \A_{[0]}^2}
F_2 \ldots F_k e^{- \: t_k \: \A_{[0]}^2},
\end{equation}
where
\begin{equation} \label{2.4}
\Delta_k = \{ (t_0, \ldots, t_k) \in \R^{k+1} \: : \:
\sum_{j=0}^k t_j = 1 \}
\end{equation}
and each $F_j$ lies in $\Omega^{\ge 1}(M; op^*)$.
For any $(t_0, \ldots, t_k) \in \Delta_k$, we have
$t_j \ge \frac{1}{k+1}$ for some $j$. Thus
the integral in (\ref{2.3}) can be written as a finite sum of integrals
where in each integral, $t_j \ge \frac{1}{k+1}$ for some $j$.
The fact that
$X e^{- \: \A^2}$ and
$e^{- \: \A^2} X$ lie in $\Omega^*(M; {\mathcal L}^1({\mathcal H}))$
now follows from  Lemma \ref{lem2}.(3).
Using (\ref{trace}), equation (\ref{closed}) can be proved along the
same lines as the proof of
\cite[Lemma 9.15]{Berline-Getzler-Vergne (1996)}.
Finally, as in
\cite[Theorem 9.17(1)]{Berline-Getzler-Vergne (1996)},
part (2) of the lemma is an immediate consequence
of (1).
\end{proof}

\begin{lemma} \label{lem8}
Let $\{\A(t)\}_{t \in [0,1]}$ and $\{\widehat{\A}(t)\}_{t \in [0,1]}$
be two smooth $1$-parameter families of superconnections on ${\mathcal H}$
with $\A(0) = \widehat{\A}(0)$ and $\A(1) = \widehat{\A}(1)$.
Suppose that the two $1$-parameter families are homotopic
relative to the endpoints, in sense that there is
a smooth $2$-parameter family of superconnections
$\{\widetilde{\A}(s,t)\}_{s,t \in [0,1]}$ on ${\mathcal H}$
with $\widetilde{\A}(0,t) = \A(t)$, $\widetilde{\A}(1,t) =
\widehat{\A}(t)$, $\widetilde{\A}(s,0) = \A(0) = \widehat{\A}(0)$ and
$\widetilde{\A}(s,1) = \A(1) = \widehat{\A}(1)$.
Then
\begin{equation} \label{2.5}
\int_0^1 \Str \left( \frac{d \A(t)}{dt}
e^{- \: \A^2(t)} \right) dt =
\int_0^1 \Str \left( \frac{d \widehat{\A}(t)}{dt}
e^{- \: \widehat{\A}^2(t)} \right) dt
\end{equation}
in $\Omega^{odd}(M)/\Image(d)$.
\end{lemma}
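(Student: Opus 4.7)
The plan is to carry out a two-parameter transgression argument, extending the standard finite-dimensional technique to this Hilbert-bundle setting using Lemma~7 and Proposition~2. First I would form the superconnection
\begin{equation*}
\B \defeq \widetilde{\A}(s,t) + ds \wedge \partial_s + dt \wedge \partial_t
\end{equation*}
on the pullback of $\mathcal{H}$ to $M \times [0,1]^2$. By hypothesis $\widetilde{\A}$ is smooth in $(s,t)$, so $\B$ fits the framework of Definition~3 with base $M \times [0,1]^2$. Lemma~7(2) then yields that $\Ch(\B) = \Str e^{-\B^2}$ is a closed form on $M \times [0,1]^2$.

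Next I would decompose $\Ch(\B)$ by bidegree in $(ds, dt)$:
\begin{equation*}
\Ch(\B) = \omega_{00} + ds \wedge \omega_{s0} + dt \wedge \omega_{0t} + ds \wedge dt \wedge \omega_{st},
\end{equation*}
where each $\omega_{\bullet\bullet} \in \Omega^{*}(M)$ depends smoothly on $(s,t)$. Expanding $e^{-\B^2}$ around $e^{-\widetilde{\A}^2}$ by Duhamel, using $\B^2 = \widetilde{\A}^2 + ds \wedge \partial_s \widetilde{\A} + dt \wedge \partial_t \widetilde{\A}$ together with the cyclicity of supertrace from Lemma~2(3), one finds, up to sign, $\omega_{00} = \Str e^{-\widetilde{\A}^2}$, $\omega_{0t} = -\Str(\partial_t \widetilde{\A}\, e^{-\widetilde{\A}^2})$ and $\omega_{s0} = -\Str(\partial_s \widetilde{\A}\, e^{-\widetilde{\A}^2})$, while $\omega_{st}$ is an explicit second-order Duhamel expression.

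Extracting the $(ds, dt)$-bidegree component of $d\Ch(\B) = 0$ yields a local relation of the form $\partial_s \omega_{0t} - \partial_t \omega_{s0} = \pm\, d_M \omega_{st}$. I would then integrate this in $t$ from $0$ to $1$. The $\partial_t \omega_{s0}$ term contributes the boundary evaluation $\omega_{s0}\big|_{t=0}^{t=1}$, and this vanishes by the relative-endpoints hypothesis: since $\widetilde{\A}(s,0) = \A(0)$ and $\widetilde{\A}(s,1) = \A(1)$ are independent of $s$, the derivative $\partial_s \widetilde{\A}$ is zero at $t = 0$ and $t = 1$, forcing $\omega_{s0}$ to vanish there. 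A final integration over $s \in [0,1]$ yields
\begin{equation*}
\int_0^1 \omega_{0t}(1,t)\, dt - \int_0^1 \omega_{0t}(0,t)\, dt = \pm\, d_M \int_{[0,1]^2} \omega_{st}\, ds\, dt,
\end{equation*}
which, after substituting $\omega_{0t}$ and restoring the identifications $\widetilde{\A}(0,t) = \A(t)$, $\widetilde{\A}(1,t) = \widehat{\A}(t)$, is exactly the claimed equality modulo $\Image(d)$.

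The main obstacle is justifying that Lemma~7 applies cleanly to superconnections on $M \times [0,1]^2$; this reduces to the smoothness of $(s,t) \mapsto e^{-\epsilon\, \widetilde{\A}_{[0]}(s,t)^2}$ as a trace-class-valued map, which is exactly Proposition~2 applied to the plot $\R^2 \to \mathcal{P}$ determined by the degree-zero part of $\widetilde{\A}$. Beyond this, the only real work is careful sign bookkeeping in the bidegree decomposition, but no further analytic input is required.
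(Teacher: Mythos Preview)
Your proposal is correct and follows essentially the same approach as the paper: both form the superconnection $\B = ds \wedge \partial_s + dt \wedge \partial_t + \widetilde{\A}(s,t)$ on $[0,1]^2 \times M$, use Lemma~7 to get $d\Ch(\B)=0$, and then integrate over the square. The only cosmetic difference is that the paper packages the final step as Stokes' theorem, writing $\int_{\partial([0,1]^2)}\Ch(\B) = -d_M\int_{[0,1]^2}\Ch(\B)$ and reading off the four boundary contributions (two of which vanish by the rel-endpoints hypothesis), whereas you extract the $ds\wedge dt$ component of $d\Ch(\B)=0$ and integrate in $t$ then $s$; these are the same computation.
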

\begin{proof}
Define a superconnection $\B$ on $[0,1] \times [0,1] \times M$ by
\begin{equation} \label{2.6}
\B = ds \wedge \partial_s + dt \wedge \partial_t + \widetilde{\A}(s,t).
\end{equation}
Then $\Ch(\B)$ is given by
\begin{align} \label{2.7}
\Ch(\B) = & \Ch(\widetilde{\A}(s,t)) - ds \wedge
\Str \left( \frac{d \widetilde{\A}(s,t)}{ds}
e^{- \: \widetilde{\A}^2(s,t)} \right) - \\
& dt \wedge
\Str \left( \frac{d \widetilde{\A}(s,t)}{dt}
e^{- \: \widetilde{\A}^2(s,t)} \right) + O(ds \wedge dt). \notag
\end{align}
From Lemma \ref{lem7}, $\Ch(\B)$ is closed on $[0,1] \times [0,1] \times M$.
Modulo $\Image(d_M)$, we have
\begin{align} \label{2.8}
\int_{\partial ([0,1] \times [0,1])} \Ch(\B) = &
\int_{[0,1] \times [0,1]} d_{[0,1] \times [0,1]} \Ch(\B) =
- \int_{[0,1] \times [0,1]} d_M \Ch(\B) \\
= & - d_M \int_{[0,1] \times [0,1]} \Ch(\B) = 0. \notag
\end{align}
The lemma follows.
\end{proof}

\begin{remark} \label{rem1}
We can weaken the homotopy hypothesis in Lemma \ref{lem8} to just assume that
there is a smooth $2$-parameter family
$\{{\mathcal O}(s,t)\}_{s,t \in [0,1]}$ in
$\Omega^0(M; {\mathcal P})$ with
${\mathcal O}(0,t) = \A(t)_{[0]}$, ${\mathcal O}(1,t) =
\widehat{\A}(t)_{[0]}$, ${\mathcal O}(s,0) = \A(0)_{[0]} =
\widehat{\A}(0)_{[0]}$ and
${\mathcal O}(s,1) = \A(1)_{[0]} = \widehat{\A}(1)_{[0]}$.
Then we can construct a $2$-parameter family
$\{\widetilde{\A}(s,t)\}_{0 \le s,t \le 1}$ as in the lemma.
\end{remark}

Let $\A_0$ and $\A_1$ be two superconnections on ${\mathcal H}$.
Let $\A_{0,[0]}$ and $\A_{1,[0]}$ denote their order-zero parts. Suppose that
$\A_{0,[0]}- \A_{1,[0]} \in \Omega^0(M; op^0({\mathcal H}))$.
For $t \in [0,1]$, put $\A(t) = (1-t)\A_0 + t\A_1$.
By Corollary \ref{cor1}, for any $t \in [0,1]$, we have
$\A(t)_{[0]} \in {\mathcal P}$.
Define $\eta(\A_0, \A_1) \in \Omega^{odd}(M)/ \Image(d)$ by
\begin{equation} \label{2.9}
\eta(\A_0, \A_1) = \int_0^1 \Str \left( \frac{d \A(t)}{dt}
e^{- \: \A^2(t)} \right) dt.
\end{equation}

\begin{lemma} \label{lem9}
\begin{equation} \label{2.10}
\Ch(\A_1)- \Ch(\A_0) = - d \eta(\A_0, \A_1).
\end{equation}
\end{lemma}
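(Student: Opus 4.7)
The plan is to derive the transgression formula by lifting the one-parameter family $\{\A(t)\}_{t \in [0,1]}$ to a single superconnection on $[0,1] \times M$ and then applying the closedness established in Lemma \ref{lem7}.

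First I would define a superconnection $\B$ on $[0,1] \times M$ by
\begin{equation*}
\B = dt \wedge \partial_t + \A(t).
\end{equation*}
Before doing anything with it, I need to check this falls within the framework of Definition \ref{def3}: the degree-zero part is $\A(t)_{[0]}$, which lies in $\mathcal{P}$ for each $t \in [0,1]$ by Corollary \ref{cor1} (this is exactly where the hypothesis $\A_{0,[0]} - \A_{1,[0]} \in \Omega^0(M; op^0(\mathcal{H}))$ is used, since then $\A(t)_{[0]} = \A_{0,[0]} + t(\A_{1,[0]} - \A_{0,[0]})$ is a perturbation of $\A_{0,[0]}$ by an odd self-adjoint element of $op^0$). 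The remaining components of $\B$ are smooth $op^*$-valued forms on $[0,1] \times M$ with the correct parity.

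Next I would compute $\Ch(\B) = \Str e^{-\B^2}$ on $[0,1] \times M$. A short calculation gives
\begin{equation*}
\B^2 = \A(t)^2 + dt \wedge \frac{d\A(t)}{dt},
\end{equation*}
and since $dt \wedge dt = 0$, the Duhamel expansion truncates after one commutator term, yielding
\begin{equation*}
\Ch(\B) = \Ch(\A(t)) - dt \wedge \Str\!\left( \frac{d\A(t)}{dt} \, e^{-\A(t)^2} \right).
\end{equation*}
By Lemma \ref{lem7}.(2), $\Ch(\B)$ is closed with respect to the total differential $d_{[0,1]} + d_M$ on $[0,1] \times M$. Writing out this closedness and separating the component containing $dt$ gives the pointwise transgression identity
\begin{equation*}
\frac{\partial}{\partial t} \Ch(\A(t)) = - d_M \Str\!\left( \frac{d\A(t)}{dt} \, e^{-\A(t)^2} \right).
\end{equation*}

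Finally I would integrate this identity from $t=0$ to $t=1$. The left-hand side yields $\Ch(\A_1) - \Ch(\A_0)$ by the fundamental theorem of calculus, and the right-hand side is $-d_M \eta(\A_0, \A_1)$ by definition of $\eta$, provided one can interchange $d_M$ with the $t$-integral --- but this is legitimate because the Duhamel-type argument behind Lemma \ref{lem7}, combined with Lemma \ref{lem2}.(3), already shows trace-norm continuity of the integrand in $t$ together with its derivatives in $M$-directions. The main technical nuisance, as in Lemma \ref{lem8}, is justifying the interchange of integration and differentiation in the trace-class setting, but this is the same type of estimate invoked repeatedly above and poses no genuinely new obstacle.
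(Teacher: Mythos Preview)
Your proof is correct and follows essentially the same approach as the paper: lift to the superconnection $\B = dt \wedge \partial_t + \A(t)$ on $[0,1]\times M$, compute $\Ch(\B)$, use closedness from Lemma~\ref{lem7} to obtain the pointwise transgression identity, and integrate over $[0,1]$. You have in fact been more careful than the paper in spelling out why $\A(t)_{[0]}\in\mathcal{P}$ via Corollary~\ref{cor1} and in flagging the interchange of $d_M$ with the $t$-integral.
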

\begin{proof}
Consider the superconnection on $[0,1] \times M$ given by
\begin{equation} \label{2.11}
\B = dt \wedge \partial_t + \A(t).
\end{equation}
Then $\Ch(\B) \in \Omega^*([0,1] \times M)$ is given by
\begin{equation} \label{2.12}
\Ch(\B) = \Ch(\A(t)) - dt \wedge
\Str \left( \frac{d \A(t)}{dt}
e^{- \: \A^2(t)} \right).
\end{equation}
From Lemma \ref{lem7}, $\Ch(\B)$ is closed on $[0,1] \times M$.
This implies that
\begin{equation} \label{2.13}
\frac{d}{dt} \Ch(\A(t)) = - d \Str \left( \frac{d \A(t)}{dt}
e^{- \: \A^2(t)} \right)
\end{equation}
in $\Omega^*(M)$. The lemma
follows by integrating over $[0,1]$.
\end{proof}

\begin{lemma} \label{lem10}
Let $\A_0$, $\A_1$ and $\A_2$ be three superconnections on ${\mathcal H}$
such that $\A_{0,[0]} - \A_{1,[0]} \in \Omega^0(M; op^0({\mathcal H}))$ and
$\A_{1,[0]} - \A_{2,[0]} \in \Omega^0(M; op^0({\mathcal H}))$. Then
\begin{equation} \label{2.14}
\eta(\A_0, \A_1) +\eta(\A_1, \A_2) = \eta(\A_0, \A_2).
\end{equation}
\end{lemma}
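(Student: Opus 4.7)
The plan is to extend the one-parameter Stokes argument of Lemma \ref{lem9} to a two-parameter family indexed by a two-simplex. I would let $\Delta^2 \subset \R^2$ be the standard two-simplex with vertices $v_0, v_1, v_2$ and barycentric coordinates $(s_0, s_1, s_2)$, pull $\mathcal{H}$ back to $\Delta^2 \times M$, and define
\begin{equation*}
\B = d_{\Delta^2} + s_0 \A_0 + s_1 \A_1 + s_2 \A_2.
\end{equation*}
The first task is to verify that $\B$ is a genuine superconnection. Its degree-zero part at $(s_0, s_1, s_2)$ equals $\A_{0,[0]} + s_1(\A_{1,[0]} - \A_{0,[0]}) + s_2(\A_{2,[0]} - \A_{0,[0]})$; since by hypothesis both differences lie in $\Omega^0(M; op^0(\mathcal{H}))$, two applications of Corollary \ref{cor1} show that this sum lies in $\Omega^0(\Delta^2 \times M; \mathcal{P})$. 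This is essentially the only step with any real content.

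Next, by Lemma \ref{lem7}, $\Ch(\B)$ is a closed form on $\Delta^2 \times M$. Fiber integration along the projection $\pi : \Delta^2 \times M \to M$, combined with Stokes's theorem on the manifold-with-corners $\Delta^2$, will yield
\begin{equation*}
d_M \int_{\Delta^2} \Ch(\B) = - \int_{\partial \Delta^2} \Ch(\B)
\end{equation*}
in $\Omega^*(M)$, so $\int_{\partial \Delta^2} \Ch(\B)$ lies in $\Image(d)$.

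On each oriented edge $[v_i, v_j]$ of $\partial \Delta^2$, the restriction of $\B$ has the form $dt \wedge \partial_t + \A(t)$ with $\A(t) = (1-t)\A_i + t\A_j$, matching (\ref{2.11}) exactly, and the identity (\ref{2.12}) then gives $\int_{[v_i, v_j]} \Ch(\B) = -\eta(\A_i, \A_j)$. Summing the three boundary contributions with the orientation induced from $\Delta^2$ produces
\begin{equation*}
\eta(\A_0, \A_1) + \eta(\A_1, \A_2) + \eta(\A_2, \A_0) \equiv 0 \pmod{\Image(d)}.
\end{equation*}
Finally, substituting $t \mapsto 1-t$ in (\ref{2.9}) gives the antisymmetry $\eta(\A_2, \A_0) = -\eta(\A_0, \A_2)$, and (\ref{2.14}) follows. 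I do not anticipate any serious obstacle beyond the verification, just noted, that convex combinations of the $\A_{i,[0]}$ stay in $\mathcal{P}$.
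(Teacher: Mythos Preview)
Your proof is correct and follows essentially the same idea as the paper: fill in a two-dimensional region bounded by the three linear paths and apply Stokes. The only packaging difference is that you work directly over a 2-simplex, whereas the paper parametrizes by a square $[0,1]^2$ (with $s=0$ giving the direct path $\A_0 \to \A_2$ and $s=1$ giving the concatenation $\A_0 \to \A_1 \to \A_2$) and then invokes Lemma~\ref{lem8}, whose proof is itself the Stokes argument on that square. Your simplex version is slightly more streamlined, since it avoids the reparametrization needed to smooth the piecewise-linear concatenated path and does not require Lemma~\ref{lem8} as a separate intermediate; conversely, the paper's route has the advantage that Lemma~\ref{lem8} is stated once and reused in several places.
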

\begin{proof}
For $s, t \in [0,1]$, put
\begin{align} \label{2.15}
& \widetilde{\A}(s,t) = \\
& \begin{cases}
(1-t-st) \A_0 + 2st \A_1 + (1-s) t \A_2
& \mbox{ if } 0 \le t \le \frac12, \\
(1-s)(1-t) \A_0 + 2s(1-t) \A_1 + (-s+t+ts) \A_2
& \mbox{ if } \frac12 \le t \le 1.
\end{cases} \notag
\end{align}
Then $\{\widetilde{\A}(0,t)\}_{t \in [0,1]}$
is the linear path from $\A_0$ to $\A_2$,
while $\{\widetilde{\A}(1,t)\}_{t \in [0,1]}$ is the concatenation
of the linear path from $\A_0$ to $\A_1$, with the linear path
from $\A_1$ to $\A_2$.
After reparametrizing $[0,1] \times [0,1]$ to make $\widetilde{\A}(s,t)$
smooth in $s$ and $t$, we can apply Lemma \ref{lem8}.
\end{proof}

Given a superconnection $\A$ on ${\mathcal H}$ and $t > 0$, define a new
superconnection by
\begin{equation} \label{2.16}
\A_t = t\A_{[0]} + \A_{[1]}+t^{-1} \A_{[2]}+\ldots.
\end{equation}

\begin{lemma} \label{lem11}
Suppose that there is some $c > 0$ so that $\A_{[0]}^2 \ge c^2 \Id$
fiberwise on ${\mathcal H}$.
Put
\begin{equation} \label{2.17}
\eta(\A, \infty) = \int_1^\infty \Str \left( \frac{d \A_t}{dt}
e^{- \: \A_t^2} \right) dt.
\end{equation}
Then
\begin{equation} \label{2.18}
\Ch(\A) = d \eta(\A, \infty)
\end{equation}
\end{lemma}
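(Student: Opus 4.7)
The plan is to run a transgression argument on $[1,\infty)$, with the spectral gap $\A_{[0]}^2 \ge c^2 \Id$ providing the Gaussian decay needed to make sense of the improper integral and to kill the boundary term at infinity.

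For each $T > 1$, since $\A_{[0]}$ takes values in $\mathcal{P}$ pointwise on $M$, so does $t\A_{[0]}$ for every $t \ge 1$, and hence $\B_T = dt \wedge \partial_t + \A_t$ is a valid superconnection on $[1,T] \times M$. By Lemma~\ref{lem7}, its Chern character
\[
\Ch(\B_T) = \Ch(\A_t) - dt \wedge \Str\!\left( \frac{d\A_t}{dt}\, e^{-\A_t^2}\right)
\]
is closed. Reading off the $dt$-component of $d\Ch(\B_T) = 0$ yields, exactly as in the proof of Lemma~\ref{lem9},
\[
\frac{d}{dt}\Ch(\A_t) = - d\, \Str\!\left(\frac{d\A_t}{dt}\, e^{-\A_t^2}\right)
\]
in $\Omega^*(M)$, and integrating from $1$ to $T$ (using $\A_1 = \A$) gives
\[
\Ch(\A_T) - \Ch(\A) = - d \int_1^T \Str\!\left(\frac{d\A_t}{dt}\, e^{-\A_t^2}\right) dt.
\]

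To let $T \to \infty$, I need uniform estimates from the Duhamel expansion of $e^{-\A_t^2}$ around $e^{-t^2 \A_{[0]}^2}$. Each summand has the form
\[
\int_{\Delta_k} e^{-s_0 t^2 \A_{[0]}^2}\, R_t \, e^{-s_1 t^2 \A_{[0]}^2} \cdots R_t \, e^{-s_k t^2 \A_{[0]}^2} \, ds,
\]
where $R_t = \A_t^2 - t^2 \A_{[0]}^2 \in \Omega^{\ge 1}(M;op^*)$ is a Laurent polynomial in $t$; nilpotency of $\Omega^{\ge 1}(M)$ truncates the expansion at $k \le \dim M =: N$. On each $\Delta_k$ some $s_{j_0} \ge 1/(N+1)$, so splitting off a factor $e^{-\delta t^2 \A_{[0]}^2}$ with $\delta = 1/(2(N+1))$ from the corresponding exponential, using the spectral bound $\| e^{-\sigma t^2\A_{[0]}^2}\|_{op} \le e^{-\sigma t^2 c^2}$ on the remaining exponentials, and applying Lemma~\ref{lem2}.(3) to control the intermediate $R_t$-sandwich in trace-class norm with a constant depending only on the pseudodifferential orders of the $R_t$, one finds that each summand is bounded in the $\mathcal{L}^1$-form norm — uniformly on compact subsets of $M$ together with all derivatives — by $C\, p(t)\, e^{-c^2 t^2/(2(N+1))}$ for some polynomial $p$ coming from the Laurent dependence of $R_t$. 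The same estimate applies to $\Str(\frac{d\A_t}{dt}\, e^{-\A_t^2})$, since $\frac{d\A_t}{dt} = \A_{[0]} - \sum_{i\ge 2}(i-1) t^{-i}\A_{[i]}$ is again a Laurent polynomial in $t$ with coefficients in $op^*$.

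Consequently $\Ch(\A_T) \to 0$ in $\Omega^{even}(M)$ as $T \to \infty$, and $\int_1^\infty \Str(\frac{d\A_t}{dt}\, e^{-\A_t^2})\, dt = \eta(\A,\infty)$ converges absolutely in $\Omega^{odd}(M)$. Passing the limit through the continuous operator $d$ in the transgression formula gives $-\Ch(\A) = - d\eta(\A,\infty)$, i.e.\ $\Ch(\A) = d\eta(\A,\infty)$. The main technical obstacle is precisely the uniform Gaussian decay of every Duhamel summand on $t \in [1,\infty)$; the ingredients that make it work are (a) nilpotency of $\Omega^{\ge 1}(M)$, which truncates the expansion to finitely many summands; (b) the uniform spectral gap $\A_{[0]}^2 \ge c^2\Id$, which produces the Gaussian operator-norm decay $e^{-\sigma t^2 c^2}$; and (c) Lemma~\ref{lem2}.(3), which supplies uniform trace-class bounds on the $R_t$-factors once a small fraction of one exponential has been peeled off.
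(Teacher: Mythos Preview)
Your argument is correct and follows essentially the same route as the paper: derive the transgression identity $\frac{d}{dt}\Ch(\A_t) = -\,d\,\Str\!\big(\tfrac{d\A_t}{dt}e^{-\A_t^2}\big)$ via the closedness of $\Ch(dt\wedge\partial_t+\A_t)$, then use the spectral gap $\A_{[0]}^2\ge c^2\Id$ together with the Duhamel expansion to obtain Gaussian decay of both $\Ch(\A_t)$ and the integrand, and integrate over $[1,\infty)$. The only cosmetic difference is that the paper obtains the decay more directly from the trace-norm bound $\|e^{-t^2\A_{[0]}^2}\|_{\mathcal L^1}\le e^{-c^2(t^2-1)}\|e^{-\A_{[0]}^2}\|_{\mathcal L^1}$, yielding the sharper rate $O(e^{-c^2 t^2/2})$, whereas your simplex-splitting argument gives $O(e^{-c^2 t^2/(2(N+1))})$; either suffices.
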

\begin{proof}
Given $m \in M$, let $\{ \lambda_i\}$ denote the eigenvalues of
$\A_{[0]}^2$ on the fiber ${\mathcal H}_m$.
By assumption, $\lambda_i \ge c^2$ for each $i$. Then for $t \ge 1$,
\begin{equation} \label{2.19}
\sum_i e^{- t^2 \lambda_i} = \sum_i e^{- (t^2 -1) \lambda_i}
e^{- \lambda_i} \le e^{- c^2 (t^2-1)} \sum_i e^{- \lambda_i}.
\end{equation}
Hence
\begin{equation} \label{2.20}
\left\| e^{- t^2 \A_{[0]}^2} \right\|_1 \le e^{- \: c^2 (t^2-1)}
\left\| e^{- \A_{[0]}^2} \right\|_1
\end{equation}
and it follows from the proof of
Lemma \ref{lem2} that on any compact subset of $M$,
\begin{equation} \label{2.21}
\Str \left(
e^{- \: \A_t^2} \right) = O \left( e^{- c^2 t^2/2} \right)
\end{equation}
and
\begin{equation} \label{2.22}
\Str \left( \frac{d \A_t}{dt}
e^{- \: \A_t^2} \right) = O \left( e^{- c^2 t^2/2} \right).
\end{equation}
In particular, the integrand in (\ref{2.17}) is integrable.

As in the proof of Lemma \ref{lem9},
\begin{equation} \label{2.23}
\frac{d}{dt} \Ch(\A_t) = - d \Str \left( \frac{d \A_t}{dt}
e^{- \: \A_t^2} \right).
\end{equation}
Using (\ref{2.21}) and (\ref{2.22}), we can integrate (\ref{2.23})
over $[1, \infty)$,
from which (\ref{2.18}) follows.
\end{proof}

\begin{lemma} \label{lem12}
Let $\E$ be a superconnection on a Hilbert bundle over $[0,1] \times M$.
Let $\E(s)$ be the superconnection on the restriction of the Hilbert
bundle to $\{s\} \times M$.
If $\E(s)_{[0]}$ is invertible for all $s \in [0,1]$
then
\begin{equation} \label{2.24}
\eta(\E(0),\infty) - \eta(\E(1), \infty) = - \int_0^1 \Ch(\E)
\end{equation}
in $\Omega^{odd}(M)/\Image(d)$.
In the case of a product Hilbert bundle,
meaning that it pulls back from a Hilbert bundle on $M$,
 (\ref{2.24}) becomes
\begin{equation} \label{2.25}
\eta(\E(0),\infty) - \eta(\E(1), \infty) = \eta(\E(0), \E(1)).
\end{equation}
\end{lemma}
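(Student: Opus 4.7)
The plan is to apply Stokes' theorem to the Chern character of an auxiliary superconnection over a two-parameter rectangle and then let one parameter tend to infinity, using the exponential decay from Lemma \ref{lem11}.

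For $T > 1$, let $\pi \colon [1,T]_t \times [0,1]_s \times M \to [0,1] \times M$ be the projection. On the pullback Hilbert bundle define
$$\widetilde{\E} = dt \wedge \partial_t + (\pi^*\E)_t,$$
where the subscript $t$ denotes the rescaling (\ref{2.16}) in the variable $t$. By Lemma \ref{lem7}, $\Ch(\widetilde{\E})$ is closed on $[1,T] \times [0,1] \times M$, and the argument of (\ref{2.8}) gives
$$\int_{\partial R} \Ch(\widetilde{\E}) \equiv 0 \pmod{\Image(d_M)}, \qquad R = [1,T] \times [0,1].$$
I would then evaluate the four boundary contributions with their induced orientations. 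At $t=1$ the restriction of $\widetilde{\E}$ is $\E$, contributing $-\int_0^1 \Ch(\E)$; at $t=T$ the restriction is the rescaling $\E_T$, contributing $+\int_0^1 \Ch(\E_T)$. On the sides $s=0$ and $s=1$ the restrictions of $\widetilde{\E}$ are $dt \wedge \partial_t + \E(0)_t$ and $dt \wedge \partial_t + \E(1)_t$ respectively, and reading off the $dt$-coefficients via (\ref{2.12}) yields
$$-\int_1^T \Str\!\left(\frac{d\E(0)_t}{dt} e^{-\E(0)_t^2}\right)dt \quad\text{and}\quad +\int_1^T \Str\!\left(\frac{d\E(1)_t}{dt} e^{-\E(1)_t^2}\right)dt.$$

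To pass to the limit $T \to \infty$, I would observe that smoothness of $s \mapsto \E(s)_{[0]}$ on the compact interval $[0,1]$ together with the invertibility hypothesis yield a uniform constant $c > 0$ with $\E(s)_{[0]}^2 \ge c^2 \Id$ for every $s \in [0,1]$. The estimates (\ref{2.20})--(\ref{2.22}) from the proof of Lemma \ref{lem11} then apply uniformly in $s$, showing that $\int_0^1 \Ch(\E_T) \to 0$ and that the two $t$-integrals converge to $\eta(\E(0), \infty)$ and $\eta(\E(1), \infty)$. Rearranging proves (\ref{2.24}). For the product case, a Duhamel computation in the spirit of Lemma \ref{lem9} gives $\int_0^1 \Ch(\E) \equiv -\eta(\E(0), \E(1)) \pmod{\Image(d_M)}$, invoking path-independence via Lemma \ref{lem8} and Remark \ref{rem1} to reduce the path determined by $\E$ to the linear path used in the definition (\ref{2.9}); combining this with (\ref{2.24}) yields (\ref{2.25}). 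The principal obstacle is precisely the uniform-in-$s$ decay controlling the $T \to \infty$ limit, which hinges on compactness of $[0,1]$ together with the standing invertibility assumption.
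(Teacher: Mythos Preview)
Your argument is correct and matches the paper's own proof essentially line for line: the paper introduces the superconnection $\F = dt\wedge\partial_t + \E_t$ on $[1,L]\times[0,1]\times M$, applies the Stokes-type identity (\ref{2.8}) to obtain exactly your boundary decomposition (paper's equation (\ref{2.27})), and then uses (\ref{2.21}) to kill the $t=L$ term in the limit. Your treatment of the product case via (\ref{2.12}) together with Lemma~\ref{lem8}/Remark~\ref{rem1} is in fact more explicit than the paper, which simply says ``Equation~(\ref{2.25}) follows as in (\ref{2.12}).''
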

\begin{proof}
Consider the superconnection $\F$ on $[1,\infty) \times [0,1] \times M$
given by
\begin{equation} \label{2.26}
dt \wedge \partial_t + \E_t.
\end{equation}
where $t$ is the coordinate on $[1, \infty)$. Given $L > 1$,
integrating $\Ch(\F)$ over $[1,L] \times [0,1]$ (c.f. (\ref{2.8})) gives
\begin{align} \label{2.27}
& \int_1^L \Str \left( \frac{d \E(0)_t}{dt} e^{- \E(0)_t^2} \right) dt -
\int_1^L \Str \left( \frac{d \E(1)_t}{dt} e^{- \E(1)_t^2} \right) dt = \\
& - \int_0^1 \Ch(\E) + \int_0^1 \Ch(\E_L). \notag
\end{align}
From (\ref{2.21}),
\begin{equation} \label{2.28}
\lim_{L \rightarrow \infty} \int_0^1 \Ch(\E_L) = 0.
\end{equation}
This proves (\ref{2.24}). Equation (\ref{2.25}) follows as in (\ref{2.12}).
\end{proof}

In Appendix \ref{app}
we prove a generalization of (\ref{2.25}) where we no longer
assume the invertibility of $\{ \E(s)_{[0]} \}_{s \in [0,1]}$.

\section{Differential $K$-theory} \label{sect3}

This section contains the main results of the paper.
In Subsection \ref{subsect3.1} we define the differential $K$-group
$\check{K}^0(M)$ in terms of superconnections on Hilbert
bundles over $M$. In Subsection \ref{subsect3.2}
we construct a map $q$
from $\check{K}^0(M)$
to the standard differential $K$-group $\check{K}^0_{stan}(M)$.
Subsection \ref{subsect3.3} makes the map more explicit when
the degree-$0$ part of the superconnection, $\A_{[0]}$, has
vector bundle kernel.  In Subsection \ref{subsect3.4} we show
that $q$ is an isomorphism, thereby proving Theorem \ref{thm1}.
Subsection \ref{mult} provides a multiplication on
$\check{K}^0(M)$.

\subsection{Definitions} \label{subsect3.1}

Let $M$ be a smooth manifold.

\begin{definition} \label{def4}
A cocycle for $\check{K}^0(M)$
is a triple $\left[ \mathcal{H}, \A, \omega \right]$
where
\begin{enumerate}
\item
$\mathcal{H}$ is a $\Z_2$-graded Hilbert bundle over $M$,
\item $\A$ is a superconnection on ${\mathcal H}$ and
\item $\omega \in \Omega^{odd}(M) /\Image(d)$.
\end{enumerate}
\end{definition}

\begin{definition} \label{def4.5}
Two cocycles $\left[ \mathcal{H}, \A, \omega \right]$ and
$\left[ \mathcal{H}', \A', \omega' \right]$ for $\check{K}^0(M)$
are isomorphic if
there is a smooth isometric isomorphism $i : {\mathcal H} \rightarrow
{\mathcal H}'$ so that $\left[ \mathcal{H}, \A, \omega \right] =
\left[ i^* \mathcal{H}', i^* \A', \omega' \right]$.
\end{definition}

\begin{definition} \label{def5}
The group $\check{K}^0(M)$ is the quotient of the free abelian
group generated by the isomorphism classes
of cocycles,
by the subgroup generated by
the following relations :
\begin{enumerate}
\item If $\left[ \mathcal{H}, \A, \omega \right]$ and
$\left[ \mathcal{H}', \A', \omega' \right]$ are cocycles then
\begin{equation} \label{3.2}
\left[ \mathcal{H}, \A, \omega \right]+
\left[ \mathcal{H}', \A', \omega' \right]=\left[ \mathcal{H} \oplus \mathcal{H}', \A \oplus \A', \omega+\omega' \right].
\end{equation}
\item If $\A_{[0]}$ is invertible then
\begin{equation} \label{3.3}
\left[ \mathcal{H}, \A, \omega \right] = \left[ 0, 0, \omega+\eta(\A, \infty) \right].
\end{equation}
\item Suppose that $\A_0$ and $\A_1$ are superconnections on
$\mathcal{H}$ such that
$\A_{0, [0]}- \A_{1, [0]} \in \Omega^0(M; op^0)$. Then
\begin{equation} \label{3.4}
\left[ \mathcal{H}, \A_0, \omega \right]=\left[ \mathcal{H}, \A_1, \omega+\eta(\A_0, \A_1) \right].
\end{equation}
\end{enumerate}
\end{definition}

\begin{example} \label{ex6}
In the setting of Example \ref{ex5}, given
$\omega \in \Omega^{odd}(M)/\Image(d)$,
the triple $\left[ {\mathcal H}, \A_{Bismut}, \omega \right]$ gives an
element of $\check{K}^0(M)$.  Compare with the ``geometric family''
cocycles of \cite{Bunke-Schick (2009)}.
\end{example}

It follows from the relations that there is a map
$\check{K}^0(M) \rightarrow \Omega^{even}(M)$ that sends a cocycle
$\left[ {\mathcal H}, \A, \omega \right]$ to $\Ch(\A) + d \omega$.

\begin{lemma} \label{extralem}
Let $\left[ \mathcal{H}, \A, \omega \right]$ be a cocycle for
$\check{K}^0(M)$.
Let $E$ be a finite dimensional Hermitian vector  bundle on $M$ with
compatible connection  $\nabla^E$. Put
$\widetilde{E} = E \oplus E$ with $\Z_2$-grading
$\begin{pmatrix}
1 & 0 \\
0 & -1
\end{pmatrix}$
and connection $\nabla^{\widetilde{E}} = \nabla^E \oplus \nabla^E$. Then
\begin{equation} \label{3.1}
\left[ \mathcal{H}, \A, \omega \right]= \left[ \mathcal{H}\oplus\widetilde{E}, \A \oplus \nabla^{\widetilde{E}}, \omega \right].
\end{equation}
\end{lemma}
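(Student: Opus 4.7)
My plan is to apply the three relations defining $\check{K}^0(M)$ in sequence: first split off $\widetilde{E}$ using relation (\ref{3.2}), then perturb by an odd invertible endomorphism via (\ref{3.4}) in order to enable (\ref{3.3}), and finally check by a direct supertrace calculation that the resulting eta-form contribution vanishes. The first step is immediate: (\ref{3.2}) gives
\[
\left[ \mathcal{H} \oplus \widetilde{E}, \A \oplus \nabla^{\widetilde{E}}, \omega \right]
= \left[ \mathcal{H}, \A, \omega \right] + \left[ \widetilde{E}, \nabla^{\widetilde{E}}, 0 \right],
\]
so the problem reduces to showing $\left[ \widetilde{E}, \nabla^{\widetilde{E}}, 0 \right] = 0$ in $\check{K}^0(M)$.

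To make the degree-zero part of the superconnection invertible, I would introduce the parallel odd self-adjoint endomorphism
\[
T = \begin{pmatrix} 0 & \Id_E \\ \Id_E & 0 \end{pmatrix} \in \Omega^0(M; \End(\widetilde{E})),
\]
which satisfies $T^2 = \Id$ and is fiberwise invertible. Because $\widetilde{E}$ is finite dimensional, $T$ lies in $\Omega^0(M; op^0(\widetilde{E}))$, so relation (\ref{3.4}) applied with $\A_0 = \nabla^{\widetilde{E}}$ and $\A_1 = \nabla^{\widetilde{E}} + T$ yields
\[
\left[ \widetilde{E}, \nabla^{\widetilde{E}}, 0 \right]
= \left[ \widetilde{E}, \nabla^{\widetilde{E}} + T, \eta(\nabla^{\widetilde{E}}, \nabla^{\widetilde{E}} + T) \right].
\]
Since $(\nabla^{\widetilde{E}} + T)_{[0]} = T$ is invertible, relation (\ref{3.3}) then collapses this to
$\left[ 0, 0, \eta(\nabla^{\widetilde{E}}, \nabla^{\widetilde{E}} + T) + \eta(\nabla^{\widetilde{E}} + T, \infty) \right]$.

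The last step is to verify that the sum of eta forms vanishes identically. Setting $\A(t) = \nabla^{\widetilde{E}} + tT$, the rescaling $(\nabla^{\widetilde{E}} + T)_t$ of (\ref{2.16}) equals $\A(t)$, so concatenating the two integrals gives
\[
\eta(\nabla^{\widetilde{E}}, \nabla^{\widetilde{E}} + T) + \eta(\nabla^{\widetilde{E}} + T, \infty)
= \int_0^{\infty} \Str \left( T e^{-\A(t)^2} \right) dt.
\]
Because $T$ is $\nabla^{\widetilde{E}}$-parallel, the graded anticommutator $\{\nabla^{\widetilde{E}}, T\}$ vanishes, so $\A(t)^2 = R^{\widetilde{E}} + t^2 \Id$, with $R^{\widetilde{E}} = (\nabla^{\widetilde{E}})^2$ diagonal in the $\Z_2$-grading of $\widetilde{E}$. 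Consequently $T e^{-\A(t)^2} = e^{-t^2}\, T e^{-R^{\widetilde{E}}}$ is off-diagonal with respect to the grading, so its fiberwise supertrace is zero. The integrand vanishes identically, and therefore $\left[ \widetilde{E}, \nabla^{\widetilde{E}}, 0 \right] = [0,0,0] = 0$, completing the proof.

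I expect the most delicate point to be the sign bookkeeping needed to verify $\{\nabla^{\widetilde{E}}, T\} = 0$ as operators on $\Omega^*(M; \widetilde{E})$, where $T$ must be lifted with the appropriate Koszul sign on forms; once this identity is granted, the remainder is the elementary observation that off-diagonal block endomorphisms have vanishing supertrace.
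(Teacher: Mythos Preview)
Your proof is correct and follows essentially the same route as the paper's: both reduce to showing $[\widetilde{E},\nabla^{\widetilde{E}},0]=0$, introduce the same superconnection $\nabla^{\widetilde{E}}+T$, apply relations (3) and (2), and then observe that $\A(t)^2$ is block-diagonal so that $T e^{-\A(t)^2}$ is off-diagonal and has vanishing supertrace. The only cosmetic difference is that the paper shows $\eta(\nabla^{\widetilde{E}},\B)$ and $\eta(\B,\infty)$ vanish separately rather than concatenating them into a single integral over $[0,\infty)$; your observation that $(\nabla^{\widetilde{E}}+T)_t=\nabla^{\widetilde{E}}+tT$ makes this concatenation legitimate.
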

\begin{proof}
From relation (1) of Definition \ref{def5}, it suffices to show that
$\left[ \widetilde{E}, \nabla^{\widetilde{E}}, 0 \right]$ vanishes
in $\check{K}^0(M)$. Define a superconnection $\B$ on $\widetilde{E}$ by
\begin{equation}
\B = \begin{pmatrix}
\nabla^E & I \\
I & \nabla^E
\end{pmatrix}.
\end{equation}
From relations (2) and (3) of Definition \ref{def5}, in $\check{K}^0(M)$
we have
\begin{align}
\left[ \widetilde{E}, \nabla^{\widetilde{E}}, 0 \right] = &
\left[ \widetilde{E}, \B, 0 \right] +
\eta \left( \nabla^{\widetilde{E}}, \B \right) \\
= & \left[ 0, 0, \eta \left( \nabla^{\widetilde{E}}, \B \right) +
\eta(\B, \infty) \right]. \notag
\end{align}

For $t \in [0,1]$, put
\begin{equation}
\A(t) = (1-t) \nabla^{\widetilde{E}} + t \B =
\begin{pmatrix}
\nabla^E & tI \\
tI & \nabla^E
\end{pmatrix}.
\end{equation}
Then
\begin{equation}
\A(t)^2 = \begin{pmatrix}
t^2 + (\nabla^E)^2 & 0 \\
0 & t^2 + (\nabla^E)^2
\end{pmatrix}
\end{equation}
and
\begin{align}
& \Str \left( \frac{d \A(t)}{dt}
e^{- \: \A^2(t)} \right) = \\
& \Str \left(
\begin{pmatrix}
0 & I \\
I & 0
\end{pmatrix}
\begin{pmatrix}
e^{-t^2 - (\nabla^E)^2} & 0 \\
0 & e^{-t^2 - (\nabla^E)^2}
\end{pmatrix}
\right) = 0. \notag
\end{align}
It follows from (\ref{2.9}) that
$\eta \left( \nabla^{\widetilde{E}}, \B \right) = 0$.
By a similar argument,
$\eta(\B, \infty) = 0$. This proves the lemma.
\end{proof}

\subsection{Map to the standard finite
dimensional version} \label{subsect3.2}

Define $\check{K}^0_{stan}(M)$ as in the previous subsection,
except using finite dimensional vector bundles
rather than Hilbert bundles, connections instead of superconnections,
removing relation (2)
and adding a stabilization relation
\begin{equation}
\left[ {\mathcal H}, \nabla, \omega \right] =
\left[ {\mathcal H} \oplus \widetilde{E} , \nabla \oplus
\nabla^{\widetilde{E}}, \omega \right]
\end{equation}
as in the conclusion of Lemma \ref{extralem}.

Then $\check{K}^0_{stan}(M)$ is isomorphic to the standard
differential $K$-theory group as defined, for example, in
\cite{Freed-Lott (2010)}.

Suppose now that $M$ is compact.
Given a cocycle
$c=\left[ \mathcal{H}, \A, \omega \right]$
for $\check{K}^0(M)$,
we construct an equivalent finite dimensional cocycle   as follows.

As in \cite[Section 9.5]{Berline-Getzler-Vergne (1996)},
one can find a finite dimensional
vector bundle $E$ on $M$ (in fact, a trivial one)
and a linear map $s : E \rightarrow {\mathcal H}^-$
so that $\A_{[0]}^+ + s \: : \:
{\mathcal H}^+ \oplus E \rightarrow {\mathcal H}^-$
is surjective. Let $\nabla^E$ be a connection on $E$. First,
from Lemma \ref{extralem}, the cocycle
$\left[ {\mathcal H}, \A, \omega \right]$ is equivalent to
$\left[ {\mathcal H} \oplus \widetilde{E}, \A \oplus \nabla^{\widetilde{E}},
\omega \right]$. Next, define $\Delta^+ \: : \:
{\mathcal H}^+ \oplus \widetilde{E}^+ \rightarrow
{\mathcal H}^- \oplus \widetilde{E}^-$ by
\begin{equation} \label{3.6}
\Delta^+ =
\begin{pmatrix}
0 & s \\
0 & 0
\end{pmatrix}
\end{equation}
and  define $\Delta^- \: : \:
{\mathcal H}^- \oplus \widetilde{E}^- \rightarrow {\mathcal H}^+ \oplus
\widetilde{E}^+$ by
\begin{equation} \label{3.7}
\Delta^- =
\begin{pmatrix}
0 & 0 \\
s^* & 0
\end{pmatrix}.
\end{equation}
Put
\begin{equation} \label{3.8}
\widehat{\A} = (\A \oplus \nabla^{\widetilde{E}}) +
\begin{pmatrix}
0 & \Delta^- \\
\Delta^+ & 0
\end{pmatrix},
\end{equation}
a superconnection on $\widehat{\mathcal H} = {\mathcal H} \oplus
\widetilde{E}$.
From relation (3) in Definition \ref{def5}, the cocycle
$\left[ \mathcal{H}, \A, \omega \right]$ is equivalent to
$\left[ \widehat{\mathcal{H}}, \widehat{\A}, \omega +
\eta (\A\oplus \nabla^{\widetilde{E}}, \widehat{\A} )\right]$.
As a map from ${\mathcal H}^+ \oplus \widetilde{E}^+$ to
${\mathcal H}^- \oplus \widetilde{E}^-$, we have
\begin{equation} \label{3.9}
\widehat{\A}_{[0]}^+ =
\begin{pmatrix}
\A_{[0]}^+ & s \\
0 & 0
\end{pmatrix}.
\end{equation}

Since $\A_{[0]}^+ + s$ is surjective, its kernel is a finite dimensional
vector bundle; c.f.
\cite[Section 9.5]{Berline-Getzler-Vergne (1996)}. Then
$\Ker( \widehat{\A}_{[0]}^+ ) = \Ker
( \A_{[0]}^+ + s )$
and
$\Ker( \widehat{\A}_{[0]}^- ) \cong
\Coker( \widehat{\A}_{[0]}^+ ) = \widetilde{E}^-$.
This shows that $\widehat{\A}_{[0]}$ has
($\Z_2$-graded) vector bundle kernel.

Let $P$ be orthogonal projection onto
$\Ker (\widehat{\A}_{[0]})$.
Put
\begin{equation} \label{3.10}
\widehat{\A}' = (I-P) \widehat{\A} (I-P) + P \widehat{\A}_{[1]} P.
\end{equation}
Then in $\check{K}^0(M)$,
$\left[ {\mathcal{H}}, {\A}, \omega \right]$ equals
\begin{align} \label{3.11}
& \left[ {\mathcal H} \oplus \widetilde{E}, \A \oplus \nabla^{\widetilde{E}},
\omega \right]
= \\
& \left[ \widehat{\mathcal{H}}, \widehat{\A}, \omega +
\eta(\A \oplus \nabla^{\widetilde{E}}, \widehat{\A}) \right] = \notag \\
&
\left[ \widehat{\mathcal{H}}, \widehat{\A}', \omega+
\eta(\A \oplus \nabla^{\widetilde{E}}, \widehat{\A}) +
\eta(\widehat{\A}, \widehat{\A}') \right] = \notag \\
 &
\left[ \widehat{\mathcal{H}}, \widehat{\A}', \omega+
\eta(\A \oplus \nabla^{\widetilde{E}}, \widehat{\A}') \right] = \notag \\
 &
\left[ (I-P)\widehat{\mathcal{H}},
(I-P) \widehat{\A} (I-P), 0 \right]+ \notag \\
& \left[\Ker (\widehat{\A}_{[0]}), P \widehat{\A}_{[1]} P, \omega+
\eta(\A \oplus \nabla^{\widetilde{E}}, \widehat{\A}') \right] = \notag \\
 &
\left[\Ker (\widehat{\A}_{[0]}),
P \widehat{\A}_{[1]} P, \right. \notag \\
& \left. \omega+\eta(\A \oplus \nabla^{\widetilde{E}},
\widehat{\A}') +\eta((I-P) \widehat{\A} (I-P), \infty) \right]. \notag
\end{align}
We define the last expression to be $q(c)$.

Thus given a cocycle $c$ for $\check{K}^0(M)$,  we
have constructed an equivalent cocycle $q(c)$ for $\check{K}^0(M)$, and
$q(c)$ is also a cocycle for $\check{K}^0_{stan}(M)$.

\begin{proposition} \label{prop3}
The class in $\check{K}^0_{stan}(M)$ represented by $q(c)$ is independent
of the choices of $E$, $s$ and $\nabla^E$.
\end{proposition}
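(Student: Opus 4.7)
\textit{Plan.} I would use a common-refinement argument, combining stabilization invariance with a linear interpolation in the $s$-variable.

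\textit{Step 1 (stabilization invariance).} I first show that enlarging $(E, s, \nabla^E)$ to $(E \oplus E', s \oplus 0, \nabla^E \oplus \nabla^{E'})$ leaves the class of $q(c)$ in $\check{K}^0_{stan}(M)$ unchanged. Tracing through (\ref{3.11}), the enlargement adds $\widetilde{E'}$ as a summand of $\Ker(\widehat{\A}_{[0]})$ on which $P \widehat{\A}_{[1]} P$ restricts to $\nabla^{\widetilde{E'}}$. Since the perturbation $\Delta$ vanishes on the $E'$-factor and $\widetilde{E'}$ lies entirely in the kernel, both eta-form corrections $\eta(\A \oplus \nabla^{\widetilde{E \oplus E'}}, \widehat{\A}')$ and $\eta((I-P)\widehat{\A}(I-P), \infty)$ decompose trivially on the $\widetilde{E'}$-block and agree with their unstabilized counterparts. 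Hence the enlarged cocycle equals the original plus the summand $[\widetilde{E'}, \nabla^{\widetilde{E'}}, 0]$, which vanishes in $\check{K}^0_{stan}(M)$ by the stabilization relation. Applying this with $E' = E_1$ to $(E_0, s_0, \nabla^{E_0})$, and with $E' = E_0$ to $(E_1, s_1, \nabla^{E_1})$, reduces the problem to comparing $q(c)$ with the two choices $(E, s_0 \oplus 0, \nabla^E)$ and $(E, 0 \oplus s_1, \nabla^E)$ on the common $E = E_0 \oplus E_1$ with $\nabla^E = \nabla^{E_0} \oplus \nabla^{E_1}$.

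\textit{Step 2 (homotopy).} Consider the linear family $s_u = ((1-u) s_0, u s_1) : E \to {\mathcal H}^-$ for $u \in [0,1]$. Throughout this path, $\A_{[0]}^+ + s_u$ remains surjective, since its image always contains $s_0(E_0)$ (for $u < 1$) or $s_1(E_1)$ (for $u > 0$). The resulting $u$-family $\widehat{\A}_u$ on $\widehat{\mathcal H} = {\mathcal H} \oplus \widetilde{E}$ has degree-zero parts whose kernels assemble into a smooth $\Z_2$-graded finite-dimensional vector bundle $\widetilde{K} \to [0,1] \times M$. Trivializing $\widetilde{K}$ in the $u$-direction (e.g.\ by parallel transport) yields an isomorphism $\widetilde{K}_0 \cong \widetilde{K}_1$; under this identification, the connections $\widetilde{\nabla}_u = P_u \widehat{\A}_{u,[1]} P_u$ at $u = 0$ and $u = 1$ differ by a Chern--Simons transgression form on the same underlying finite-dimensional bundle.

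\textit{Step 3 (conclusion and main obstacle).} Using relation (3) of Definition \ref{def5} to absorb the Chern--Simons transgression between $\widetilde{\nabla}_0$ and $\widetilde{\nabla}_1$, the equality $q_0(c) = q_1(c)$ in $\check{K}^0_{stan}(M)$ reduces to an additivity identity among eta forms: the Chern--Simons contribution $\eta(\widetilde{\nabla}_0, \widetilde{\nabla}_1)$ must match the net change $\eta_1 - \eta_0$ in the two eta corrections appearing in the $\omega$-slots of $q_u(c)$. I would prove this identity by assembling the superconnection $\widetilde{\A} = du \wedge \partial_u + \widehat{\A}_u$ on the pulled-back bundle over $[0,1] \times M$, integrating its Chern character over $[0,1]$, and invoking Lemma \ref{lem10} (additivity of eta in the path variable), Lemma \ref{lem11} (Chern character of the invertible $(I-P)\widehat{\A}(I-P)$ factor), Lemma \ref{lem12} (relating $\eta(\,\cdot\,, \infty)$ across homotopies), and the relative-cohomology generalization in the Appendix. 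The main technical obstacle is controlling the infinite-dimensional orthogonal-complement factor $(I-P_u)\widehat{\A}_u(I-P_u)$ and its $t \to \infty$ rescaling along the homotopy: these contributions must combine, via the transgression formula applied on $[0,1] \times M$, to produce exactly the finite-dimensional Chern--Simons form on $\widetilde{K}$, with no residual anomaly at infinity.
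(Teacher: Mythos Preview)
Your proposal is correct and follows essentially the same strategy as the paper: reduce to a common $(E,\nabla^E)$ via stabilization, then linearly interpolate in $s$ and track the kernel bundle and eta forms over $[0,1]\times M$ using Lemmas~\ref{lem10} and~\ref{lem12}. The only organizational difference is that the paper first isolates the $\nabla^E$-independence as a separate direct computation (via Lemma~\ref{lem10} and (\ref{2.25})) before doing the common-refinement step, whereas you absorb this into the general stabilization-plus-homotopy argument; the Appendix material you cite is not actually needed here.
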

\begin{proof}
We first claim that given $E$ and $s$, the class is independent of the
choice of $\nabla^E$. Let $\underline{\nabla}^E$ be another choice of
connection on $E$. Let $\widehat{\underline{\A}}$ and
$\widehat{\underline{\A}}^\prime$ be the corresponding superconnections
constructed using $\underline{\nabla}^E$ instead of $\nabla^E$.
Note that $\Ker(\widehat{\underline{\A}}_{[0]}) =
\Ker(\widehat{{\A}}_{[0]})$, so the projection operator $P$ doesn't change.
Then the change in the class of $q(c)$ in $\check{K}^0_{stan}(M)$ is
$[0,0,\delta \omega]$, where
\begin{align} \label{3.12}
\delta \omega = &
\eta(P \widehat{\underline{\A}}_{[1]} P,
P \widehat{{\A}}_{[1]} P) +
\eta({\A} \oplus \underline{\nabla}^{\widetilde{E}},
\widehat{\underline{\A}}') +\eta((I-P) \widehat{\underline{\A}} (I-P), \infty)-
\\
& \eta(\A \oplus \nabla^{\widetilde{E}},
\widehat{\A}')  - \eta((I-P) \widehat{\A} (I-P), \infty). \notag
\end{align}
Using Lemma \ref{lem10} and (\ref{2.25}), this equals
\begin{align} \label{3.13}
& \eta(P \widehat{\underline{\A}}_{[1]} P,
P \widehat{{\A}}_{[1]} P) +
\eta({\A} \oplus \underline{\nabla}^{\widetilde{E}},
\widehat{\underline{\A}}') + \\
& \eta((I-P) \widehat{\underline{\A}} (I-P),
(I-P) \widehat{\A} (I-P)) -
 \eta(\A \oplus \nabla^{\widetilde{E}}, \widehat{\A}') = \notag \\
& \eta({\A} \oplus \underline{\nabla}^{\widetilde{E}},
\widehat{\underline{\A}}') +\eta(\widehat{\underline{\A}}^\prime,
{\widehat{\A}}^\prime) -
 \eta(\A \oplus \nabla^{\widetilde{E}}, \widehat{\A}') = \notag \\
& \eta({\A} \oplus \underline{\nabla}^{\widetilde{E}},
{\A} \oplus {\nabla}^{\widetilde{E}}) =
\eta(\underline{\nabla}^{\widetilde{E}}, {\nabla}^{\widetilde{E}})  = 0. \notag
\end{align}
This proves the claim.

Now suppose that $E_1$, $s_1$, $E_2$ and $s_2$ are two different choices
for $E$ and $s$. Let $q_1(c)$ and $q_2(c)$ be the ensuing cocycles
for $\check{K}^0_{stan}(M)$.
Put $F = E_1 \oplus E_2$, with connection $\nabla^{E_1} \oplus
\nabla^{E_2}$. Put $S_1 = s_1 \oplus 0$ and
$S_2 = 0 \oplus s_2$, both being maps from $F$ to ${\mathcal H}^-$.
Let $C_1$ and $C_2$ be the ensuing cocycles
for $\check{K}^0(M)$. Then $q_1(c)$ is equivalent to $q(C_1)$, and
$q_2(c)$ is equivalent to $q(C_2)$.
For $t \in [0,1]$, put $S(t) = t S_2 + (1-t) S_1$. Then for all
$t \in [0,1]$, the map $\A_{[0]}^+ + S(t)$ is surjective from
${\mathcal H}^+ \oplus F$ to ${\mathcal H}^-$.
Hence we can reduce to the case when
$E_1 = E_2$, which we will again call $E$, but there are two maps
$s_1, s_2 : E \rightarrow {\mathcal H}^-$ that are joined by a
$1$-parameter family of maps $s(t) : E \rightarrow {\mathcal H}^-$,
so that
$\A_{[0]}^+ + s(t)$ is surjective for all $t \in [0,1]$.
If $q_1(c)$ is the cocycle for $\check{K}^0_{stan}(M)$ constructed using
$s(0)$, and $q_2(c)$ is the cocycle for $\check{K}^0_{stan}(M)$ constructed using
$s(1)$, then we want to show that $q_1(c) = q_2(c)$.

Define
$\widehat{\A}(t)$, $P(t)$,
and $\widehat{\A}^\prime(t)$ accordingly.
The family of Hilbert bundles
$\{ \widehat{\mathcal H}(t) \}_{t \in [0,1]}$
forms a Hilbert bundle ${\mathcal L}$ over $[0,1] \times M$.
There are subbundles ${\mathcal M}$ and ${\mathcal M}^\prime$ of ${\mathcal L}$
formed by $\{ P(t) \widehat{\mathcal H}(t) \}_{t \in [0,1]}$
and $\{(I-P(t)) \widehat{\mathcal H}(t) \}_{t \in [0,1]}$,
respectively.
The Hilbert bundle ${\mathcal L}$
acquires a superconnection
\begin{align} \label{3.14}
\widehat{\C}^\prime = &
(I-P(t))
\left( dt \wedge \partial_t + \widehat{\A}(t) \right) (I-P(t)) +  \\
& P(t) \left( dt \wedge \partial_t + \widehat{\A}_{[1]}(t) \right)
P(t). \notag
\end{align}
The finite dimensional bundle ${\mathcal M}$ acquires a
connection
\begin{equation} \label{3.15}
\B = P(t) \left( dt \wedge \partial_t +
\widehat{\A}_{[1]}(t) \right) P(t).
\end{equation}
There is a superconnection on ${\mathcal M}^\prime$ given by
\begin{equation} \label{3.16}
\widehat{\B}^{\prime} =
(I-P(t))
\left( dt \wedge \partial_t + \widehat{\A}(t) \right) (I-P(t)).
\end{equation}

In $\check{K}^0_{stan}(M)$, we have
\begin{align} \label{3.17}
&
\left[ \Ker(\widehat{\A}_{[0]}(1)), P(1) \widehat{\A}_{[1]}(1) P(1), 0 \right]
= \\
&
\left[ \Ker(\widehat{\A}_{[0]}(0)), P(0) \widehat{\A}_{[1]}(0) P(0), 0 \right]
+ \left[ 0,0,\int_0^1 \Ch(\B) \right], \notag
\end{align}
as can be seen from trivializing ${\mathcal M}$ with respect to
$[0,1]$ and then applying
(\ref{2.12}) and (\ref{3.4}).

From Lemma \ref{lem10},
\begin{align} \label{3.18}
\eta(\A \oplus \nabla^{\widetilde{E}}, \widehat{\A}^\prime (1)) -
\eta(\A \oplus \nabla^{\widetilde{E}}, \widehat{\A}^\prime (0)) = &
\eta(\widehat{\A}^\prime (0), \widehat{\A}^\prime (1)) \\
= & - \int_0^1 \Ch(\widehat{\C}^\prime). \notag
\end{align}
From Lemma \ref{lem12},
\begin{align} \label{3.19}
& \eta((I - P(1)) \widehat{\A}(1) (I - P(1)), \infty) = \\
& \eta((I - P(0)) \widehat{\A}(0) (I - P(0)), \infty) +
\int_0^1 \Ch(\B^{\prime}). \notag
\end{align}
Now
\begin{equation} \label{3.20}
\int_0^1 \Ch(\widehat{\C}^{\prime}) = \int_0^1 \Ch(\B) +
\int_0^1 \Ch(\B^{\prime}).
\end{equation}
Equations (\ref{3.17}), (\ref{3.18}), (\ref{3.19}) and (\ref{3.20}) imply that
$q_1(c) = q_2(c)$.
\end{proof}

\begin{proposition} \label{prop4}
$q$ passes to a map from $\check{K}^0(M)$ to $\check{K}^0_{stan}(M)$.
\end{proposition}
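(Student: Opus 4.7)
The plan is to check that $q$ respects each of the three generating relations in Definition \ref{def5}, so that it descends from cocycles to a well-defined map $q:\check{K}^0(M)\to\check{K}^0_{stan}(M)$. By Proposition \ref{prop3} we are free to make convenient choices of stabilizing data $(E,s,\nabla^E)$ when evaluating $q$ on any given cocycle.

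For the additivity relation, given cocycles $c_i=[\mathcal{H}_i,\A_i,\omega_i]$ stabilized by $(E_i,s_i,\nabla^{E_i})$ for $i=1,2$, I would compute $q$ on the sum $[\mathcal{H}_1\oplus\mathcal{H}_2,\A_1\oplus\A_2,\omega_1+\omega_2]$ using the direct-sum data $(E_1\oplus E_2,\,s_1\oplus s_2,\,\nabla^{E_1}\oplus\nabla^{E_2})$. Every ingredient of (\ref{3.11})---the kernel bundle, the projection, the projected connection $P\widehat{\A}_{[1]}P$, and each eta form---respects direct sums (using additivity of $\Str$), yielding $q(c_1)+q(c_2)$. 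For the invertibility relation, when $\A_{[0]}$ is invertible one may take $E=0$ and $s=0$; then $\widehat{\mathcal{H}}=\mathcal{H}$, $\widehat{\A}=\A$, $\Ker(\widehat{\A}_{[0]})=0$, $P=0$, $\widehat{\A}'=\A$, and formula (\ref{3.11}) collapses to $[0,0,\omega+\eta(\A,\infty)]$, which also equals $q([0,0,\omega+\eta(\A,\infty)])$ computed trivially.

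The main obstacle is the third relation: given $\A_0,\A_1$ with $\A_{0,[0]}-\A_{1,[0]}\in\Omega^0(M;op^0(\mathcal{H}))$, one must show $q([\mathcal{H},\A_0,\omega])=q([\mathcal{H},\A_1,\omega+\eta(\A_0,\A_1)])$. By Corollary \ref{cor1} the linear path $\A(t)=(1-t)\A_0+t\A_1$ satisfies $\A(t)_{[0]}\in\mathcal{P}$ for every $t\in[0,1]$. Following \cite[Section 9.5]{Berline-Getzler-Vergne (1996)}, together with the facts that surjectivity is an open condition and $[0,1]$ is compact, we can choose common stabilizing data $(E,s,\nabla^E)$ such that $\A(t)_{[0]}^++s$ is surjective for all $t\in[0,1]$. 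The resulting families $\widehat{\A}(t),P(t),\widehat{\A}'(t)$ then assemble over $[0,1]\times M$ into the product bundle $\mathcal{L}=[0,1]\times\widehat{\mathcal{H}}$, carrying finite-rank subbundle $\mathcal{M}=\{P(t)\widehat{\mathcal{H}}\}$, complement $\mathcal{M}'$, and the three superconnections $\widehat{\C}',\B,\widehat{\B}'$ defined as in (\ref{3.14})-(\ref{3.16}).

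At this point I would reuse the chain of identities (\ref{3.17})-(\ref{3.20}) verbatim: the first compares the finite-dimensional kernel cocycles via $\int_0^1\Ch(\B)$; the second, via Lemma \ref{lem10}, yields $\eta(\widehat{\A}'(0),\widehat{\A}'(1))=-\int_0^1\Ch(\widehat{\C}')$; the third, via Lemma \ref{lem12}, relates the two infinite-dimensional eta forms through $\int_0^1\Ch(\widehat{\B}')$; and the fourth is the decomposition $\int_0^1\Ch(\widehat{\C}')=\int_0^1\Ch(\B)+\int_0^1\Ch(\widehat{\B}')$. The new feature, compared with Proposition \ref{prop3}, is the extra $\eta(\A_0,\A_1)$ on the right-hand side of relation (3); this emerges by applying Lemma \ref{lem10} to split $\eta(\A_0\oplus\nabla^{\widetilde{E}},\widehat{\A}'(1))=\eta(\A_0\oplus\nabla^{\widetilde{E}},\A_1\oplus\nabla^{\widetilde{E}})+\eta(\A_1\oplus\nabla^{\widetilde{E}},\widehat{\A}'(1))$ and noting that $\eta(\A_0\oplus\nabla^{\widetilde{E}},\A_1\oplus\nabla^{\widetilde{E}})=\eta(\A_0,\A_1)$ by additivity of $\Str$. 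The delicate part is the bookkeeping of these eta forms and verifying that all the Chern-character terms cancel correctly in $\check{K}^0_{stan}(M)$.
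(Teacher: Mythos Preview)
Your proposal is correct and follows essentially the same approach as the paper: both dispatch relations (1) and (2) quickly and handle relation (3) by stabilizing the linear path $\A(t)$ uniformly over $[0,1]\times M$ and then adapting the identities (\ref{3.17})--(\ref{3.20}) from the proof of Proposition~\ref{prop3}, with the extra $\eta(\A_0,\A_1)$ accounted for via Lemma~\ref{lem10}. Your handling of that extra term is in fact slightly more explicit than the paper's terse equation~(\ref{3.21}).
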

\begin{proof}
We have to show that $q$ vanishes on relations for $\check{K}^0(M)$.
This is evident for relations (1) and (2) in Definition \ref{def5}.
For relation (3), suppose that $\A_0$ and $\A_1$ are two superconnections
on ${\mathcal H}$ such that $\A_{0,[0]} - \A_{1,[0]} \in \Omega^0(M; op^0)$.
For $t \in [0,1]$, put $\A(t) = t \A_1 + (1-t) \A_0$. From
Corollary \ref{cor1},
we know that $\A(t) \in {\mathcal P}$.
Let ${\mathcal K}$ be the product Hilbert bundle $[0,1] \times {\mathcal H}$
over $[0,1] \times M$.
We can find a finite dimensional vector bundle $E$ on $[0,1] \times M$
(in fact a trivial one) and a map $s : E \rightarrow {\mathcal K}^-$
such that for each $t \in [0,1]$,
if $E(t) \rightarrow \{t\} \times M$ is the
restricted bundle and $s(t) : E(t) \rightarrow {\mathcal H}^-$ is the
restricted map then
$\A_{[0]}(t) + s(t) : {\mathcal H}^+
\oplus E(t) \rightarrow {\mathcal H}^-$ is surjective.
Define
$\widehat{\A}(t)$, $P(t)$,
and $\widehat{\A}^\prime(t)$ accordingly.
The family of Hilbert bundles
$\{ \widehat{\mathcal H}(t) \}_{t \in [0,1]}$
forms a Hilbert bundle ${\mathcal L}$ over $[0,1] \times M$.
There are subbundles ${\mathcal M}$ and ${\mathcal M}^\prime$ of ${\mathcal L}$
formed by $\{ P(t) \widehat{\mathcal H}(t) \}_{t \in [0,1]}$
and $\{(I-P(t)) \widehat{\mathcal H}(t) \}_{t \in [0,1]}$,
respectively. Define $\widehat{\C}^\prime$, $\B$ and
$\widehat{\B}^\prime$ as in (\ref{3.14}), (\ref{3.15}) and (\ref{3.16}).
As in the proof of Proposition \ref{prop3}, we conclude that
\begin{align} \label{3.21}
& \left[ \Ker(\widehat{A}_{[0]}(1)), P(1) \widehat{A}_{[1]} P(1), 0 \right] -
\left[ \Ker(\widehat{A}_{[0]}(0)), P(0) \widehat{A}_{[1]} P(0), 0 \right] = \\
& \left[ 0, 0, \int_0^1 \Ch(\B) \right] =
\left[ 0, 0, \int_0^1 \Ch(\widehat{\C}^\prime) \right] -
\left[ 0, 0, \int_0^1 \Ch({\B}^\prime) \right] = \notag \\
& \left[0, 0, - \eta(\A \oplus \nabla^{\widetilde{E}}, \widehat{\A}^\prime(1))
+ \eta(\A \oplus \nabla^{\widetilde{E}}, \widehat{\A}^\prime(0)) \right]
- \notag \\
& \left[ 0, 0, \eta((I - P(1)) \widehat{\A}(1) (I - P(1)), \infty) - \right.
\notag \\
& \left. \eta((I - P(0)) \widehat{\A}(0) (I - P(0)), \infty) \right].
\notag
\end{align}
The proposition follows.
\end{proof}

\subsection{Vector bundle kernel} \label{subsect3.3}

In this subsection we simplify the formula for $q(c)$
in the special case when
the family $\Ker(\A_{[0]})$ of vector spaces actually
form a vector bundle on $M$.

\begin{proposition} \label{prop5}
Suppose that $\Ker(\A_{[0]})$ is a ($\Z_2$-graded) vector bundle.
Let $Q$ denote orthogonal projection onto $\Ker(\A_{[0]})$.
Put
\begin{equation} \label{3.22}
\B = (I-Q) \A (I-Q) + Q \A_{[1]} Q.
\end{equation}
If an element $c \in \check{K}^0(M)$ is represented by
$c = [{\mathcal H}, \A, \omega]$ then $q(c) \in
\check{K}^0(M)_{stan}$ is represented by
\begin{equation} \label{3.23}
\left[ \Ker(\A_{[0]}), Q \A_{[1]} Q, \omega +
\eta(\A, \B) +
\eta((I-Q) \A (I-Q), \infty)
\right].
\end{equation}
\end{proposition}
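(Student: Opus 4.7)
The plan is in two parts. First, I would show that $[\mathcal{H}, \A, \omega]$ equals the right-hand side of (\ref{3.23}) as a class in $\check{K}^0(M)$ by direct application of the three relations of Definition \ref{def5}. Second, I would invoke Proposition \ref{prop4} to transfer this equality to $\check{K}^0_{stan}(M)$, together with a direct verification that $q$ acts as the identity on finite-dimensional cocycles.

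For the first part, the key observation is $\B_{[0]} = \A_{[0]}$. Since $\A_{[0]} Q = 0$ by definition of $Q$ and $Q \A_{[0]} = (\A_{[0]} Q)^* = 0$ by self-adjointness, one has $\A_{[0]} = (I-Q) \A_{[0]} (I-Q) = \B_{[0]}$; hence $\A_{[0]} - \B_{[0]} = 0 \in op^0$ and relation (3) of Definition \ref{def5} gives $[\mathcal{H}, \A, \omega] = [\mathcal{H}, \B, \omega + \eta(\A, \B)]$. The superconnection $\B$ respects the orthogonal decomposition $\mathcal{H} = \Ker(\A_{[0]}) \oplus \Ker(\A_{[0]})^{\perp}$, acting as the connection $Q \A_{[1]} Q$ on the kernel and as $(I-Q) \A (I-Q)$ on the complement, so relation (1) splits the cocycle. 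Since $(I-Q) \A_{[0]} (I-Q)$ is invertible on $(I-Q)\mathcal{H}$, relation (2) reduces the complement summand to $[0, 0, \eta((I-Q)\A(I-Q), \infty)]$, yielding $[\mathcal{H}, \A, \omega] = $ (\ref{3.23}) in $\check{K}^0(M)$.

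For the second part, Proposition \ref{prop4} gives $q([\mathcal{H}, \A, \omega]) = q(c')$ in $\check{K}^0_{stan}(M)$, where $c'$ denotes (\ref{3.23}). It remains to verify $q(c') = c'$ in $\check{K}^0_{stan}(M)$. Writing $c' = [F, \nabla^F, \mu]$, I would take $E = F^-$, $s = I$, and $\nabla^E = \nabla^{F^-}$ in the construction of Subsection \ref{subsect3.2}. A block-matrix computation in the split basis $F^+ \oplus \widetilde{E}^- \oplus \widetilde{E}^+ \oplus F^-$ shows that $\Ker(\widehat{\A}_{[0]}) = F^+ \oplus \widetilde{E}^- \cong F$ (via the canonical identification $\widetilde{E}^- = F^-$), that $\widehat{\A}'$ becomes $\nabla^F \oplus \mathcal{B}_0$ where $\mathcal{B}_0 = \left(\begin{smallmatrix} \nabla^{F^-} & I \\ I & \nabla^{F^-} \end{smallmatrix}\right)$ is the graded-trivial superconnection from Lemma \ref{extralem}, and that $\A \oplus \nabla^{\widetilde{E}}$ becomes $\nabla^F \oplus \nabla^{\widetilde{F^-}}$. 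By additivity of eta forms (Lemma \ref{lem10}) together with the vanishings $\eta(\nabla^{\widetilde{F^-}}, \mathcal{B}_0) = 0 = \eta(\mathcal{B}_0, \infty)$ proved in Lemma \ref{extralem}, both $\eta(\A \oplus \nabla^{\widetilde{E}}, \widehat{\A}')$ and $\eta((I-P) \widehat{\A} (I-P), \infty)$ vanish, so $q(c') = c'$.

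The main obstacle is the block-matrix verification in the second part. The crucial point is that with $E = F^-$ and $s = I$, the coupling terms $\Delta^{\pm}$ act only between the auxiliary summand $\widetilde{E}^+$ and the $\mathcal{H}^- = F^-$ summand, both of which lie in the complement $(I-P)\widehat{\mathcal{H}}$; since $\widehat{\A}_{[1]} = \nabla^F \oplus \nabla^{\widetilde{E}}$ is already block diagonal in the split basis, $\widehat{\A}'$ inherits the direct-sum structure needed for the eta form vanishings.
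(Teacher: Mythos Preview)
Your proof is correct and follows essentially the same route as the paper's: both replace $\A$ by $\B$ via relation (3), choose $E=\Ker(\A_{[0]}^-)$ with $s$ the inclusion, and reduce the remaining eta contributions to the vanishings $\eta(\nabla^{\widetilde{E}},\mathcal{B}_0)=\eta(\mathcal{B}_0,\infty)=0$ from Lemma~\ref{extralem}. The only organizational difference is that you first reduce $c$ all the way to the finite-dimensional cocycle (\ref{3.23}) inside $\check{K}^0(M)$ and then verify $q$ is the identity on it, whereas the paper applies the $q$-construction directly to the intermediate infinite-dimensional cocycle $[\mathcal{H},\B,\omega+\eta(\A,\B)]$; the block computations and the appeal to Proposition~\ref{prop4} are the same.
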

\begin{proof}
Put $c = [{\mathcal H}, \A, \omega]$.
Then $c$ is equivalent to
$c^\prime = [{\mathcal H}, \B, \omega + \eta(\A, \B)]$, so
it suffices to look at $q(c^\prime)$.
In the construction of $q(c^\prime)$,
put $E = \Ker(\A_{[0]}^-)$, with connection
$\nabla^E = Q \A_{[1]}^- Q$. Let
$s : E \rightarrow {\mathcal H}^-$ be inclusion.
Define $\widehat{\B}$ as in (\ref{3.8}).
In terms of the orthogonal decompositions
\begin{equation} \label{3.24}
\widehat{\mathcal H}^+ = (I-Q){\mathcal H}^+ \oplus \Ker(\A_{[0]}^+) \oplus
\widetilde{E}^+
\end{equation}
and
\begin{equation} \label{3.25}
\widehat{\mathcal H}^- = (I-Q){\mathcal H}^- \oplus \Ker(\A_{[0]}^-) \oplus
\widetilde{E}^-,
\end{equation}
we can write
\begin{equation} \label{3.26}
\widehat{\B}_{[0]}^+ =
\begin{pmatrix}
(I - Q) \A_{[0]}^+ (I - Q) & 0 & 0 \\
0 & 0 & I \\
0 & 0 & 0
\end{pmatrix}
\end{equation}
and
\begin{equation} \label{3.27}
\widehat{\B}_{[0]}^- =
\begin{pmatrix}
(I - Q) \A_{[0]}^- (I - Q) & 0 & 0 \\
0 & 0 & 0 \\
0 & I & 0
\end{pmatrix}.
\end{equation}
Then
\begin{equation} \label{3.28}
\Ker(\widehat{\B}_{[0]}^+) \cong
0 \oplus \Ker(\A_{[0]}^+) \oplus 0
\end{equation}
and
\begin{equation} \label{3.29}
\Ker(\widehat{\B}_{[0]}^-) \cong 0 \oplus 0 \oplus \widetilde{E}^-.
\end{equation}

Let $P$ be projection onto $\Ker(\widehat{\B}_{[0]})$.
Put
\begin{equation} \label{3.30}
\widehat{\B}^\prime = (I-P) \widehat{\B} (I-P) +
P \widehat{\B}_{[1]} P.
\end{equation}
Then the cocycle
$q(c^\prime)$
is represented by
\begin{align} \label{3.31}
& \left[\Ker (\widehat{\B}_{[0]}),
P \widehat{\B}_{[1]} P, \right. \\
& \left. \omega+
\eta(\A, \B) + \eta(\B \oplus \nabla^{\widetilde{E}},
\widehat{\B}') +\eta((I-P) \widehat{\B} (I-P), \infty) \right]. \notag
\end{align}

As $P(Q \oplus 0)=P$, there is an orthogonal decomposition
\begin{equation}
\widehat{\mathcal H} = ((I - Q) \oplus 0) \widehat{\mathcal H} \oplus
((Q \oplus 0) - P) \widehat{\mathcal H} \oplus P \widehat{\mathcal H}.
\end{equation}
Put
\begin{equation}
{\mathcal H}^{\prime \prime} = (((Q \oplus 0) - P) \widehat{\mathcal H} =
\widetilde{E}^+ \oplus \Ker(\A_{[0]}^-),
\end{equation}
with the $\Z_2$-grading operator
$\begin{pmatrix}
1 & 0 \\
0 & -1
\end{pmatrix}$.
Define a
connection $\nabla^{{\mathcal H}^{\prime \prime}}$
by
$\nabla^{{\mathcal H}^{\prime \prime}} =
\begin{pmatrix}
\nabla^E & 0 \\
0 & \nabla^E
\end{pmatrix}$ and a superconnection $\E$ on
${\mathcal H}^{\prime \prime}$ by
$\E =
\begin{pmatrix}
\nabla^E & I \\
I & \nabla^E
\end{pmatrix}$.
Due to cancellations,
$\eta(\B \oplus \nabla^{\widetilde{E}},
\widehat{\B}') = \eta \left(
\nabla^{{\mathcal H}^{\prime \prime}}, \E \right)$.

Next,
\begin{equation} \label{3.32}
\eta((I-P) \widehat{\B} (I-P), \infty) = \eta((I-Q) \A (I-Q), \infty) +
\eta(\E, \infty).
\end{equation}
As in the proof of Lemma \ref{extralem},
\begin{equation} \label{straight}
\eta \left(
\nabla^{{\mathcal H}^{\prime \prime}}, \E \right)
+ \eta(\E, \infty) = 0.
\end{equation}
The proposition follows.
\end{proof}

\subsection{Proof of Theorem \ref{thm1}} \label{subsect3.4}

We now give the proof of Theorem \ref{thm1}, in the
following precise form.

\begin{proposition} \label{prop6}
The map $q \: : \: \check{K}^0(M) \rightarrow \check{K}^0_{stan}(M)$ is an
isomorphism.
\end{proposition}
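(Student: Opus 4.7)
The plan is to construct a tautological inverse \( i \colon \check{K}^0_{stan}(M) \to \check{K}^0(M) \) and verify that \( q \circ i \) and \( i \circ q \) are the identities. Define \( i \) on generators by sending \( [E, \nabla, \omega] \) to itself, where on the right \( E \) is regarded as a finite dimensional Hilbert bundle (Example \ref{ex3}) and \( \nabla \) is regarded as a superconnection with \( \nabla_{[0]} = 0 \). This is legitimate: in the finite dimensional case \( op^k = B(E) \) for all \( k \) and \( |0| = I \), so \( 0 \in \mathcal{P} \).

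To see that \( i \) descends to a homomorphism on \( \check{K}^0_{stan}(M) \), I would check the three defining relations. The direct sum relation is immediate. The stabilization relation \( [E, \nabla, \omega] = [E \oplus \widetilde{F}, \nabla \oplus \nabla^{\widetilde{F}}, \omega] \) is precisely the content of Lemma \ref{extralem}. The Chern-Simons relation is literally relation (3) of Definition \ref{def5} applied to two connections on \( E \): both have vanishing degree-zero part, so the hypothesis \( \nabla_{0,[0]} - \nabla_{1,[0]} \in \Omega^0(M; op^0) \) is automatic, and the eta form (\ref{2.9}) reduces in this setting to the usual Chern-Simons transgression.

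For \( i \circ q = \id_{\check{K}^0(M)} \), observe that the chain of equalities displayed in (\ref{3.11}) already exhibits \( c = q(c) \) in \( \check{K}^0(M) \), so \( i(q(c)) = c \) by the very construction of \( q \). For the reverse composition \( q \circ i = \id_{\check{K}^0_{stan}(M)} \), apply Proposition \ref{prop5} to \( i([E, \nabla, \omega]) \): since \( \A = \nabla \) has \( \A_{[0]} = 0 \), the family \( \Ker(\A_{[0]}) \) is tautologically the \( \Z_2 \)-graded vector bundle \( E \), so \( Q = I \) and \( \B = \A \); consequently \( \eta(\A, \B) = 0 \) and \( \eta((I-Q)\A(I-Q), \infty) = \eta(0, \infty) = 0 \). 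Formula (\ref{3.23}) then collapses to \( q(i([E, \nabla, \omega])) = [E, \nabla, \omega] \). The only real thing to watch is that each of these finite dimensional specializations --- notably the reduction of the eta form to the Chern-Simons transgression and the vanishing of the various eta terms when \( \A_{[0]} = 0 \) --- is verified cleanly; but each follows directly from the formulas of Subsection \ref{subsect2.2}, so none constitutes a genuine obstacle.
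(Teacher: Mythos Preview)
Your proposal is correct and follows essentially the same approach as the paper: construct the inverse as the tautological inclusion of finite-dimensional cocycles, check well-definedness on $\check{K}^0_{stan}(M)$ via the defining relations (using Lemma~\ref{extralem} for stabilization), and verify the two compositions using (\ref{3.11}) and Proposition~\ref{prop5} with $Q=\Id$. The only cosmetic difference is that the paper additionally spells out compatibility with the isomorphism relation of Definition~\ref{def4.5} (by an argument parallel to Lemma~\ref{extralem}), which you implicitly treat as automatic.
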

\begin{proof}
Let
$e = \left[ F, \nabla^F, \omega \right]$
be a cocycle for $\check{K}^0_{stan}(M)$, where
$F$ is a finite dimensional $\Z_2$-graded Hermitian vector bundle
on $M$ and
$\nabla^F$ is a compatible connection on $F$.
We can also consider $e$ to be
a cocycle for $\check{K}^0(M)$.
Let $r(e)$ denote this cocycle for $\check{K}^0(M)$.
Then $r: 
\check{K}^0_{stan}(M) \rightarrow \check{K}^0(M)$ is
well defined, because of Definition \ref{def4.5} and Lemma \ref{extralem}.

Given a cocycle $c = \left[ {\mathcal H}, \A, \omega \right]$ for
$\check{K}^0(M)$, the cocycle
$r(q(c))$ amounts to considering $q(c)$ as a cocycle for
$\check{K}^0(M)$. Equation (\ref{3.11}) shows that
$r(q(c))$ is equivalent to $c$ in $\check{K}^0(M)$.

Given a cocycle $e$ for $\check{K}^0_{stan}(M)$,
applying Proposition \ref{prop5} to $r(e)$, with
$Q = \Id$, shows that $q(r(e))$ is equivalent to $e$ in
$\check{K}^0_{stan}(M)$. (Note that in this application, all of the
vector bundles in the proof of Proposition \ref{prop5} are finite
dimensional.) This proves the proposition.
\end{proof}

\begin{remark}
  The isomorphism in Proposition \ref{prop6} is compatible with
  the structures in differential cohomology theory.
  \end{remark} 

\subsection{Multiplicative structure} \label{mult}

Let $\left[ {\mathcal H}_1, \A_1, \omega_1 \right]$ and
$\left[ {\mathcal H}_2, \A_2, \omega_2 \right]$ be two
cocyles for $\check{K}^0(M)$. Let $\gamma_1$ be the $\Z_2$-grading
operator for ${\mathcal H}_1$. Put
\begin{align}
{\mathcal H} = & {\mathcal H}_1 \otimes {\mathcal H}_2, \\
\A = & (\A_1 \otimes I) + (\gamma_1 \otimes A_2), \notag \\
\omega = & (\Ch(\A_1) \wedge \omega_2) + (\omega_1 \wedge \Ch(\A_2))
+ (\omega_1 \wedge d\omega_2). \notag
\end{align}

\begin{proposition}
The map that sends $\left[ {\mathcal H}_1, \A_1, \omega_1 \right],
\left[ {\mathcal H}_2, \A_2, \omega_2 \right]$ to
$\left[ {\mathcal H}, \A, \omega \right]$ passes to a map
$m : \check{K}^0(M) \times \check{K}^0(M)  \rightarrow \check{K}^0(M) $.
\end{proposition}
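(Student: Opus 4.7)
The plan is to verify that $m$ descends to the quotient by checking isomorphism invariance and each of the three relations of Definition \ref{def5} in the first variable; the check in the second variable is analogous up to bookkeeping with $\gamma_1$. First, $\omega$ is well-defined in $\Omega^{odd}(M)/\Image(d)$: shifting $\omega_i$ by an exact form alters $\omega$ only by exact forms, using closedness of $\Ch(\A_i)$ and the graded Leibniz rule. Isomorphism invariance is immediate from naturality, and Relation (1) reduces to the decomposition $(\mathcal{H}_1 \oplus \mathcal{H}_1') \otimes \mathcal{H}_2 \cong (\mathcal{H}_1 \otimes \mathcal{H}_2) \oplus (\mathcal{H}_1' \otimes \mathcal{H}_2)$ together with additivity of the Chern character.

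The central computational input is the identity $\B(t)^2 = \A(t)^2 \otimes I + I \otimes \A_2^2$ for $\B(t) = \A(t) \otimes I + \gamma_1 \otimes \A_2$, which forces the commuting factorization $e^{-\B(t)^2} = e^{-\A(t)^2} \otimes e^{-\A_2^2}$. Combined with multiplicativity of the supertrace on graded tensor products, this yields the form-level identity
\begin{equation*}
\eta(\A_0 \otimes I + \gamma_1 \otimes \A_2,\, \A_1 \otimes I + \gamma_1 \otimes \A_2) = \eta(\A_0, \A_1) \wedge \Ch(\A_2)
\end{equation*}
whenever $\A_{0,[0]} - \A_{1,[0]} \in op^0$. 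Relation (3) then reduces to checking $(\Ch(\A_0) - \Ch(\A_1)) \wedge \omega_2 \equiv \eta(\A_0, \A_1) \wedge d\omega_2 \pmod{\Image(d)}$, which is immediate from Lemma \ref{lem9} together with the graded Leibniz identity $d\eta \wedge \omega_2 \equiv \eta \wedge d\omega_2 \pmod{\Image(d)}$ valid for odd-degree $\eta$.

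Relation (2), assuming $\A_{[0]}$ invertible, requires the congruence $\eta(\B, \infty) \equiv \eta(\A, \infty) \wedge \Ch(\A_2) \pmod{\Image(d)}$, where $\B = \A \otimes I + \gamma_1 \otimes \A_2$ and invertibility of $\B_{[0]}$ is inherited from $\B_{[0]}^2 = \A_{[0]}^2 \otimes I + I \otimes \A_{2,[0]}^2 \geq c^2 I$. The main obstacle here is that the standard rescaling $\B_t = \A_t \otimes I + \gamma_1 \otimes \A_{2,t}$ acts on both tensor factors simultaneously, so $\frac{d\B_t}{dt}$ splits into a first-factor contribution that directly produces $\eta(\A, \infty) \wedge \Ch(\A_{2,t})$ plus a second-factor contribution involving the untwisted trace $\tr_1(e^{-\A_t^2})$. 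I would handle this by comparison with the partial rescaling $\A_t \otimes I + \gamma_1 \otimes \A_2$—for which the desired identity already holds at the form level by the same factorization—via a two-parameter Stokes argument on $(t_1, t_2) \in [1, \infty)^2$, in the spirit of Lemmas \ref{lem11} and \ref{lem12}; the exponential decay $e^{-t^2 \A_{[0]}^2}$ coming from invertibility of $\A_{[0]}$ kills the boundary contributions at infinity. Assembling these two eta identities with the product formula for $\omega$ then establishes invariance under each of the three relations, so $m$ descends to $\check{K}^0(M) \times \check{K}^0(M) \to \check{K}^0(M)$.
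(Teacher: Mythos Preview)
Your proposal is correct and follows essentially the same route as the paper: the factorization $e^{-\B(t)^2}=e^{-\A(t)^2}\otimes e^{-\A_2^2}$ and supertrace multiplicativity give the form-level identity $\eta(\B_0,\B_1)=\eta(\A_0,\A_1)\wedge\Ch(\A_2)$, which handles Relation~(3) exactly as in the paper, and for Relation~(2) both arguments compare the full rescaling $\B_t$ with the partial rescaling $\widehat{\A}(t)=(\A_1)_t\otimes I+\gamma_1\otimes\A_2$ via a two-parameter Stokes argument controlled by the invertibility of $\A_{1,[0]}$. The only cosmetic difference is that the paper parametrizes the homotopy linearly as $\B(s,t)=s\A_t+(1-s)\widehat{\A}(t)$ on $[0,1]\times[1,\infty)$ and checks the explicit lower bound $\B(s,t)_{[0]}^2\ge t^2\,\A_{1,[0]}^2\otimes I$, whereas you sketch an equivalent argument over a region in $[1,\infty)^2$; both rely on the same decay estimate.
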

\begin{proof}
First, the isomorphism class of $\left[ {\mathcal H}, \A, \omega \right]$
only depends on the isomorphism classes of
$\left[ {\mathcal H}_1, \A_1, \omega_1 \right]$ and
$\left[ {\mathcal H}_2, \A_2, \omega_2 \right]$.
We will check that the relations on the first factor
$\left[ {\mathcal H}_1, \A_1, \omega_1 \right]$ pass to relations on
$\left[ {\mathcal H}, \A, \omega \right]$. The argument for the second
factor is similar.

Relation (1) of Definition \ref{def5} is clearly compatible.
For relation (2) of Definition \ref{def5}, suppose that $\A_{1,[0]}$
is invertible.  Since
\begin{equation}
\A_{[0]}^2 = \A_{1,[0]}^2 + \A_{2,[0]}^2,
\end{equation}
it follows that $\A_{[0]}$ is invertible. It suffices to show that
\begin{align}
& \eta(\A, \infty) + (\Ch(\A_1) \wedge \omega_2) + (\omega_1 \wedge
\Ch(\A_2)) + (\omega_1 \wedge d\omega_2) = \\
& (\omega_1 + \eta(\A_1, \infty)) \wedge (\Ch(A_2) + d\omega_2)) \notag
\end{align}
in $\Omega^{odd}(M)/\Image(d)$ or, equivalently, that
\begin{equation} \label{suffices}
\eta(\A, \infty) =
\eta(\A_1, \infty) \wedge \Ch(A_2)
\end{equation}
in $\Omega^{odd}(M)/\Image(d)$.
Put
\begin{equation}
\widehat{\A}(t) = ((\A_1)_t \otimes \Id) + (\gamma_1 \otimes \A_2).
\end{equation}
Then
\begin{equation}
\Str \left( \frac{d\widehat{A}(t)}{dt} e^{- \widehat{\A}^2(t)\: } \right)
= \Str \left( \frac{d(\A_1)_t}{dt} e^{- (\A_1)_t^2\: } \right) \wedge
\Ch(\A_2).
\end{equation}
For $s \in [0,1]$, put
\begin{equation}
\B(s,t) = s \A_t + (1-s) \widehat{\A}(t).
\end{equation}
Then
\begin{equation}
\B(s,t)_{[0]} = t (\A_{1,[0]} \otimes \Id) + (st+1-s)
(\gamma_1 \otimes \A_{2,[0]})
\end{equation}
and
\begin{equation}
\B(s,t)_{[0]}^2 = (t^2 \A_{1,[0]}^2 \otimes \Id) + (st+1-s)^2
(\Id \otimes \A_{2,[0]}^2) \ge  t^2 \A_{1,[0]}^2 \otimes \Id.
\end{equation}

Using the strict positivity of $\A_{1,[0]}^2$, we can apply
a homotopy argument as in the proof of Lemma \ref{lem12}.
Since $\widehat{\A}(1) = \A$, it follows that
$\B(s,1) = \A$ for all $s \in [0,1]$. 
Then the analog of the term $\int_0^1 \Ch(\E)$ in Lemma \ref{lem12}
vanishes, so 
\begin{equation}
  \eta(\A, \infty) = \int_1^\infty \Tr_s \left(
\frac{d\widehat{\A}(t)}{dt} e^{- \: \widehat{\A}^2(t)} \right) \: dt =
\eta(\A_1, \infty) \wedge \Ch(A_2).
\end{equation}
in $\Omega^{odd}(M)/\Image(d)$.
This proves ( \ref{suffices}).

For relation (3) of Definition \ref{def5}, suppose that $\A_1^\prime$ is
another superconnection on ${\mathcal H}_1$ with
$\A_{1,[0]}^\prime - \A_{1,[0]} \in \Omega^0(M; op^0({\mathcal H}_1))$.
The new product
superconnection $\A^\prime$ on ${\mathcal H}$ satisfies
\begin{equation}
\A^\prime - \A = (\A_{1}^\prime - \A_{1}) \otimes \Id
\end{equation}
and hence
\begin{equation}
\A^\prime_{[0]} - \A_{[0]} = (\A_{1,[0]}^\prime - \A_{1,[0]}) \otimes \Id \in
op^0({\mathcal H}).
\end{equation}
It suffices to show that
\begin{align}
\Ch(\A_1) \wedge \omega_2 \: = & \: (\Ch(\A_1^\prime) \wedge \omega_2) +
(\eta(\A_1, \A_1^\prime) \wedge \Ch(\A_2)) + \\
& \: (\eta(\A_1, \A_1^\prime) \wedge d\omega_2) +
\eta(\A^\prime, \A) \notag
\end{align}
in $\Omega^{odd}(M)/\Image(d)$
or, equivalently,
\begin{equation} \label{equivalently}
\eta(\A, \A^\prime) = \eta(\A_1, \A_1^\prime) \wedge \Ch(\A_2).
\end{equation}
in $\Omega^{odd}(M)/\Image(d)$.
For $t \in [0,1]$, put
\begin{equation}
\A(t) = (1-t) \A + t \A^\prime
\end{equation}
and
\begin{equation}
\A_1(t) = (1-t) \A_1 + t \A_1^\prime.
\end{equation}
Then
\begin{equation}
\A^2(t) = (\A^2_1(t) \otimes \Id) + (\Id \otimes \A^2_2)
\end{equation}
and
\begin{equation}
\frac{d\A(t)}{dt} \: = \: \frac{d\A_1(t)}{dt} \otimes \Id,
\end{equation}
from which (\ref{equivalently}) follows.
\end{proof}

\begin{remark}
  It is straightforward to see that $m$ is unital and associative.
  That it is commutative can be shown using the graded isomorphism
  ${\mathcal H}_2 \otimes {\mathcal H}_1 \rightarrow
  {\mathcal H}_1 \otimes {\mathcal H}_2$ that sends
  $v_2 \otimes v_1$ to $(-1)^{\sign(v_1) \sign(v_2)} v_1 \otimes v_2$.
\end{remark}

\section{Pushforward} \label{sect4}

Given a fiber bundle $\pi : M \rightarrow B$ with even dimensional
compact fibers and a Riemannian
structure, in this section we construct a pushforward
$\pi_* : \check{K}^0_{fin}(M) \rightarrow \check{K}^0(B)$.
Here $\check{K}^0_{fin}(M)$ denotes the differential $K$-theory
constructed using finite dimensional Hermitian vector bundles with
compatible superconnections.
As $\check{K}^0_{fin}(M)$ is isomorphic to $\check{K}^0(M)$,
to define the pushforward on $\check{K}^0(M)$
it suffices to just define the pushforward on $\check{K}^0_{fin}(M)$.

In Subsection \ref{subsect4.1} we define the pushforward on
cocycles. In Subsection \ref{subsect4.2} we show that it passes
to a pushforward on the differential $K$-groups.  Subsection
\ref{subsect4.3} has the proof that $\pi_*$ coincides with the
analytic index of \cite{Freed-Lott (2010)}.

One could consider defining the pushforward directly on cocycles for
$\check{K}^0(M)$, instead of just on cocycles for
$\check{K}^0_{fin}(M)$. This may be possible but there are some
technical issues; see Remark \ref{added}.

\subsection{Pushforward on cocycles} \label{subsect4.1}

Let $\check{K}^0_{fin}(M)$ denote the group formed by only allowing
finite dimensional Hilbert bundles in the generators and relations of
Definition \ref{def5}. The proof of Proposition \ref{prop6}, when restricted to
finite dimensional Hilbert bundles, shows that $\check{K}^0_{fin}(M)$ is
isomorphic to $\check{K}^0_{stan}(M)$.
(The distinction between $\check{K}^0_{fin}(M)$ and
$\check{K}^0_{stan}(M)$ is that cocycles for the former involve
superconnections, whereas cocycles for the latter involve connections.)
From (\ref{3.11}), any cocycle for
$\check{K}^0(M)$ is equivalent to a cocycle for $\check{K}^0_{fin}(M)$.

Let $\pi \colon M \to B$ be a fiber bundle with even dimensional compact
fibers.
We assume that $\pi$ has a Riemannian structure in the sense of
\cite[Section 3.1]{Freed-Lott (2010)}. This means that we
are given an inner product $g^{T^V M}$ on the vertical tangent bundle
$T^VM = \Ker(d\pi)$ and a horizontal
distribution $T^H M$ on $M$. Let ${\mathcal D}^VM$ denote the
vertical density bundle on $M$.
We assume that $\pi$ is
${\mbox spin}^c$-oriented in the sense that the vertical tangent
bundle $T^V M$ on $M$ has a ${\mbox spin}^c$-structure, with
characteristic Hermitian line bundle $L^V M \rightarrow M$.
We also assume that $\pi$ has a differential ${\mbox spin}^c$-structure
in the sense of \cite[Section 3.1]{Freed-Lott (2010)}, meaning that
$L^V M$ is equipped with a unitary connection. Let $S^V M$ denote the
vertical spinor bundle on $M$. The connections on $T^V M$ and
$L^V M$ induce a connection $\widehat{\nabla}^{T^V M}$ on $S^V M$.

There is a degenerate Clifford module $C_0(M)$ generated by
$T^*M$, with a Clifford action $m$ on $\pi^*T^*B \otimes S^V M$
\cite[Section 10.2]{Berline-Getzler-Vergne (1996)}.
Note that we can identify   $C_0(M)$ and $\Lambda^*(T^*M)$ as vector
bundles on $M$.
Let $\B$ denote the Bismut superconnection acting on
$C^\infty(M; S^VM \otimes ({{\mathcal D}^V M})^{\frac12} )$.

Let $[E, \A, \omega]$ be a cocycle for
$\check{K}^0_{fin}(M)$.
We define the push-forward $\pi_*[E, \A, \omega]$ to be the cocycle
$[\mathcal{H}, \pi_*\A, \omega']$ where the definition of the terms is
as follows.  First, $\mathcal{H}$ is the Hilbert bundle whose fiber
over $b \in B$ is
\begin{equation} \label{4.1}
{\mathcal H}_b = L^2 \left( \pi^{-1}(b);
(E \otimes
S^VM \otimes ({{\mathcal D}^V M})^{\frac12}) \Big|_{\pi^{-1}(b)} \right) .
\end{equation}
The operator $D_b$ on ${\mathcal H}_b$ is the Dirac-type operator.
Then
\begin{equation} \label{4.2}
{\mathcal H}_b^\infty = C^\infty \left( \pi^{-1}(b);
(E \otimes
S^VM \otimes ({{\mathcal D}^V M})^{\frac12}) \Big|_{\pi^{-1}(b)} \right) .
\end{equation}
and
\begin{equation} \label{4.3}
C^\infty(B; {\mathcal H}^\infty) =
C^\infty(M; E \otimes S^VM \otimes ({{\mathcal D}^V M})^{\frac12}).
\end{equation}

We use the identification
\begin{align} \label{4.4}
& C^\infty(M; E \otimes S^VM \otimes ({{\mathcal D}^V M})^{\frac12}) = \\
& C^\infty(M; E)
\otimes_{C^\infty(M)}
C^\infty(M; S^VM \otimes ({{\mathcal D}^V M})^{\frac12}). \notag
\end{align}
Given $\xi \in C^\infty(M; E)$, write
\begin{equation} \label{4.5}
\A \xi = \sum_i \xi_i \otimes \omega_i \in
C^\infty(M; E \otimes \Lambda^*(T^*M)),
\end{equation} a locally finite sum on $M$,
where $\xi_i \in C^\infty(M; E)$ and
$\omega_i \in C^\infty(M; \Lambda^*(T^*M))$.
With $s \in C^\infty(M; S^VM \otimes ({{\mathcal D}^V M})^{\frac12})$,
let $m(\A \otimes 1)$ denote the operator that sends $\xi \otimes s$ to
\begin{equation} \label{4.6}
\sum_i \xi_i \otimes m(\omega_i) s \in
C^\infty(M; E \otimes S^VM \otimes
({{\mathcal D}^V M})^{\frac12} \otimes \pi^* T^*B).
\end{equation}

Acting on
$C^\infty(M; E \otimes S^VM \otimes ({{\mathcal D}^V M})^{\frac12})$ and
using (\ref{4.4}),
we define
\begin{equation} \label{4.7}
\pi_*\A = m(\A \otimes \Id) + \Id \otimes \B.
\end{equation}
This is well-defined since one can check, for example, that
if $\xi \in C^\infty(M; E)$, $s \in
C^\infty(M; S^VM \otimes ({{\mathcal D}^V M})^{\frac12})$ and
$f \in C^\infty(M)$ then
\begin{equation}
(\pi_*\A)(\xi f \otimes s) = (\pi_*\A)(\xi \otimes fs).
\end{equation}

Let ${\mathcal R}_t$ denote the rescaling operator on
superconnections, i.e. ${\mathcal R}_t \A = \A_t$ where the right-hand
side is defined in (\ref{2.16}).
Put
\begin{equation} \label{4.8}
  (\pi_*\A)_u = {\mathcal R}_u
  \left( m({\mathcal R}_{u^{-1}} \A \otimes \Id) + \Id \otimes \B \right).
\end{equation}
From \cite[Theorem 5.41]{Kahle (2011)}, the limit
$\lim_{u \to 0} \eta ((\pi_*\A)_u, \pi_*\A) \in \Omega^{odd}(B)/\Image(d)$
exists;
see also \cite[Appendix 1]{Bismut-Cheeger (1989)}.
Denote the limit by $\eta ((\pi_*\A)_0, \pi_*\A)$.
With the characteristic form $\Todd \left(
\widehat{\nabla}^{T^V M} \right)$ from \cite[Section 2.1]{Freed-Lott (2010)},
we put
\begin{equation} \label{4.9}
\omega' = \int_{M/B} \Todd \left(
\widehat{\nabla}^{T^V M} \right) \wedge \omega +
\eta ((\pi_*\A)_0, \pi_*\A).
\end{equation}

\begin{definition} \label{def6}
\begin{equation} \label{4.10}
\pi_* \left[ E, \A, \omega \right] =
\left[ {\mathcal H}, \pi_* \A, \omega^\prime \right].
\end{equation}
\end{definition}

\subsection{Pushforward on differential $K$-theory} \label{subsect4.2}

In this subsection we show that the pushforward $\pi_*$ is
well defined on $\check{K}^0_{fin}(M)$. We begin with a lemma.

\begin{lemma} \label{lem13}
Suppose that $\{\A(s)\}_{s \in [0,1]}$ is a smooth $1$-parameter
family of finite dimensional superconnections on $E$.
Then
\begin{equation} \label{4.11}
\lim_{u \rightarrow 0} \eta((\pi_* \A(1))_u, (\pi_* \A(0))_u)
= \int_{M/B} \Todd \left(
\widehat{\nabla}^{T^V M} \right) \wedge \eta(\A(1), \A(0)).
\end{equation}
\end{lemma}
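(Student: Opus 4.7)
The plan is to recast (\ref{4.11}) as the $ds$-component of a transgressed Chern character identity on $[0,1]\times B$, and then invoke the family local index theorem on an auxiliary submersion. The key structural observation is that the Riemannian and differential $\mathrm{spin}^c$ data for $\pi$, as well as the Bismut superconnection $\B$, pull back tautologically along $\pi\times\Id\colon[0,1]\times M\to[0,1]\times B$, so the pushforward construction of Subsection \ref{subsect4.1} commutes with adjoining an $s$-direction.

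Concretely, I would introduce the superconnection $\widetilde{\A}=ds\wedge\partial_s+\A(s)$ on the pullback of $E$ to $[0,1]\times M$, which by the above satisfies
\begin{equation*}
(\pi_*\widetilde{\A})_u = ds\wedge\partial_s + (\pi_*\A(s))_u .
\end{equation*}
Expanding the Chern characters of $\widetilde{\A}$ and $(\pi_*\widetilde{\A})_u$ in the $ds$-direction as in (\ref{2.12}) and integrating the $ds$-components over $s\in[0,1]$ produces $-\eta(\A(0),\A(1))$ and $-\eta((\pi_*\A(0))_u,(\pi_*\A(1))_u)$ respectively, modulo exact forms on $B$.

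Next I would apply the family local index theorem to $\pi\times\Id$ with the finite dimensional coefficient superconnection $\widetilde{\A}$ to obtain
\begin{equation*}
\lim_{u\to 0}\Ch((\pi_*\widetilde{\A})_u) \;=\; \int_{[0,1]\times M/[0,1]\times B}\Todd(\widehat{\nabla}^{T^V M})\wedge\Ch(\widetilde{\A})
\end{equation*}
in $\Omega^{even}([0,1]\times B)$, with existence of the $u\to 0$ limit on the eta-form side provided by \cite[Theorem 5.41]{Kahle (2011)} applied to $\widetilde{\A}$. Extracting $ds$-components on both sides, integrating over $s\in[0,1]$, and using $\eta(\A(1),\A(0))=-\eta(\A(0),\A(1))$ then yields (\ref{4.11}).

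The main obstacle is justifying that the $u\to 0$ limit can be interchanged with the $s$-integral, i.e.\ that the convergence is uniform for $s\in[0,1]$. Because $E$ is finite dimensional, $\A(s)$ depends smoothly on $s$, and $[0,1]$ is compact, the Getzler rescaling bounds underlying the family local index theorem are uniform in $s$, so dominated convergence applies. A secondary routine check is that the sign and $\Image(d)$ bookkeeping match across (\ref{4.11}), which follows from the closedness of $\Ch((\pi_*\widetilde{\A})_u)$ on $[0,1]\times B$ together with Stokes' theorem as in (\ref{2.8}).
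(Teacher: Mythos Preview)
Your proposal is correct and follows essentially the same route as the paper: introduce the auxiliary superconnection $ds\wedge\partial_s+\A(s)$ on $[0,1]\times M$, identify the two sides of (\ref{4.11}) as $s$-integrals of the $ds$-components of $\Ch((\pi_*\widetilde{\A})_u)$ and $\int_{M/B}\Todd\wedge\Ch(\widetilde{\A})$, and then apply the family local index theorem with uniform convergence in $s$ to pass the limit inside the integral. The paper cites \cite[Theorem 5.33]{Kahle (2011)} for the uniform $u\to 0$ convergence of the Chern character (rather than Theorem 5.41, which concerns the eta-form limit), but the argument is otherwise the same.
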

\begin{proof}
Put $\E = ds \wedge \partial_s + \A(s)$ and
$\pi_* \E = ds \wedge \partial_s + \pi_* \A(s)$. Then
\begin{equation} \label{4.12}
\lim_{u \rightarrow 0} \eta((\pi_* \A(1))_u, (\pi_* \A(0))_u)
= \lim_{u \rightarrow 0} \int_0^1 \Ch((\pi_* \E)_u)
\end{equation}
and
\begin{align} \label{4.13}
& \int_{M/B} \Todd \left(
\widehat{\nabla}^{T^V M} \right) \wedge \eta(\A(1), \A(0)) = \\
& \int_0^1 \int_{M/B} \Todd \left(
\widehat{\nabla}^{T^V M} \right) \wedge \Ch(\E). \notag
\end{align}
From \cite[Theorem 5.33]{Kahle (2011)},
\begin{equation} \label{4.14}
\lim_{u \rightarrow 0} \Ch((\pi_* \E)_u) = \int_{M/B} \Todd \left(
\widehat{\nabla}^{T^V M} \right) \wedge \Ch(\E).
\end{equation}
uniformly on $[0,1] \times M$;
see also \cite[(2.127)]{Bismut-Gillet-Soule (1988)} and
\cite[Proposition 11.7]{Bismut-Lebeau (1991)}.
The lemma follows.
\end{proof}

\begin{proposition} \label{prop7}
The map $\pi_*$, as defined on cocycles for
$\check{K}^0_{fin}(M)$, passes
to a map $\pi_* : \check{K}^0_{fin}(M) \rightarrow \check{K}^0(B)$.
\end{proposition}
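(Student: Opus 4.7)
The plan is to verify that the assignment $\left[ E, \A, \omega \right] \mapsto \left[ \mathcal{H}, \pi_* \A, \omega' \right]$ respects each of the three relations of Definition \ref{def5}, so that it descends to $\check{K}^0_{fin}(M)$. Relation (1) is immediate from the construction: one has $\pi_*(\A \oplus \A') = \pi_* \A \oplus \pi_* \A'$ on $\mathcal{H} \oplus \mathcal{H}'$, integration over $M/B$ is additive, and the eta form $\eta((\pi_*(\A \oplus \A'))_0, \pi_*(\A \oplus \A'))$ splits as a sum by block-diagonality and additivity of the supertrace; relation (1) in $\check{K}^0(B)$ then gives the equality.

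For Relation (3), given $\A_0, \A_1$ on $E$ with $\A_{0,[0]} - \A_{1,[0]} \in \Omega^0(M; \End(E))$, one has $(\pi_* \A_0)_{[0]} - (\pi_* \A_1)_{[0]} = m((\A_{0,[0]} - \A_{1,[0]}) \otimes \Id) \in \Omega^0(B; op^0(\mathcal{H}))$, so relation (3) in $\check{K}^0(B)$ applies. The desired compatibility reduces to
\begin{equation*}
\eta((\pi_* \A_0)_0, \pi_* \A_0) + \eta(\pi_* \A_0, \pi_* \A_1) - \eta((\pi_* \A_1)_0, \pi_* \A_1) \equiv \int_{M/B} \Todd(\widehat{\nabla}^{T^V M}) \wedge \eta(\A_0, \A_1) \pmod{\Image(d)}.
\end{equation*}
For each fixed $u \in (0,1]$, a $2$-parameter family construction modelled on the proof of Lemma \ref{lem10}, combined with Lemma \ref{lem8} and Remark \ref{rem1} (the intermediate degree-$0$ parts have the form $m(\widetilde{\A}_{[0]} \otimes \Id) + v \cdot \Id \otimes \B_{[0]}$ with $v \in [u,1]$ and therefore remain in $\mathcal{P}$), identifies the left-hand side, with each $(\pi_* \A_i)_0$ replaced by $(\pi_* \A_i)_u$, with $\eta((\pi_* \A_0)_u, (\pi_* \A_1)_u)$. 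Sending $u \to 0$ and applying Lemma \ref{lem13} then produces the right-hand side.

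The main obstacle is Relation (2). Assume $\A_{[0]}$ is invertible, so $\A_{[0]}^2 \ge c^2 \Id$ for some $c > 0$. In the graded tensor-product convention the operators $m(\A_{[0]} \otimes \Id)$ and $\Id \otimes \B_{[0]}$ anticommute, hence $((\pi_* \A)_{u,[0]})^2 = \A_{[0]}^2 \otimes \Id + u^2 \Id \otimes \B_{[0]}^2 \ge c^2 \Id$ uniformly in $u \in [0,1]$. In particular $\pi_* \A$ satisfies the hypothesis of Lemma \ref{lem11}, and applying relation (2) in $\check{K}^0(B)$ together with $\pi_*[0, 0, \omega + \eta(\A, \infty)] = [0, 0, \int_{M/B} \Todd(\widehat{\nabla}^{T^V M}) \wedge (\omega + \eta(\A, \infty))]$ reduces the required equality to
\begin{equation*}
\eta((\pi_* \A)_0, \pi_* \A) + \eta(\pi_* \A, \infty) \equiv \int_{M/B} \Todd(\widehat{\nabla}^{T^V M}) \wedge \eta(\A, \infty) \pmod{\Image(d)}.
\end{equation*}
By the product-bundle case of Lemma \ref{lem12}, the left-hand side equals $\lim_{u \to 0} \eta((\pi_* \A)_u, \infty)$. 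The substantive analytic step is to interchange this $u \to 0$ limit with the $\int_1^\infty$ integral defining $\eta((\pi_* \A)_u, \infty)$: the uniform bound $((\pi_* \A)_{u,t,[0]})^2 \ge t^2 c^2 \Id$ gives, as in the proof of Lemma \ref{lem11}, an integrable dominating function of the form $O(e^{-c^2 t^2/2})$ independent of $u$, while the pointwise-in-$t$ adiabatic limit of the integrand is supplied by the fiberwise local index arguments underlying \cite[Theorems 5.33 and 5.41]{Kahle (2011)}. Dominated convergence then yields the identity and completes the verification.
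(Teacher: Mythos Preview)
Your handling of relations (1) and (3) is essentially correct and matches the paper's argument; your formula $(\pi_* \A_0)_{[0]} - (\pi_* \A_1)_{[0]} = m((\A_{0,[0]} - \A_{1,[0]}) \otimes \Id)$ is incomplete, since the degree-$0$ part of $\pi_*\A$ also picks up the vertical components of $\A_{[1]}$ (through the coupled Dirac operator) and of the higher $\A_{[i]}$ (through Clifford multiplication), but as all of these contribute $0$th-order operators the conclusion survives.

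The genuine gap is in your treatment of relation (2). Your key claim, that $m(\A_{[0]}\otimes\Id)$ and $\Id\otimes\B_{[0]}$ anticommute and hence $((\pi_*\A)_{u,[0]})^2 = \A_{[0]}^2\otimes\Id + u^2\,\Id\otimes\B_{[0]}^2$, is false. The operator you are calling $\Id\otimes\B_{[0]}$ is the fiberwise Dirac operator coupled to $\A_{[1]}$; it differentiates sections of $E$ along the fiber, and $\A_{[0]}\in C^\infty(M;\End(E))$ is not fiberwise constant. A direct computation (see equation~(\ref{4.20}) in the paper) gives the anticommutator $c(\nabla^{\A_{[1]}}\A_{[0]})$, and there are further cross terms coming from $m(X\otimes\Id)$ with $X=\A_{[2]}+\ldots$. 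Consequently you do \emph{not} have the uniform lower bound $((\pi_*\A)_{u,[0]})^2\ge c^2\Id$, $(\pi_*\A)_{[0]}$ need not be invertible, and your dominated-convergence argument has no majorant.

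The paper confronts exactly this difficulty. Its fix is to first observe (Lemma~\ref{lem14}) that the relevant class $F(\A)$ is unchanged under a smooth deformation of $\A$, and then (Lemma~\ref{lem15}) to rescale $\A_{[0]}\mapsto v\A_{[0]}$ so that the $v^2\A_{[0]}^2$ term dominates the cross terms in (\ref{4.20}) and $(\pi_*\A)_{[0]}$ becomes genuinely invertible. Only after this reduction does one face the adiabatic identity (\ref{4.21}); the paper appeals to the adiabatic-limit techniques of \cite{Bunke-Ma (2004)} and \cite{Liu (2015)} for that step. Your final dominated-convergence sketch is in the same spirit, but it cannot be carried out until the invertibility and uniform-positivity issues above are repaired.
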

\begin{proof}
It is clear that relation (1), in Definition \ref{def5}
for $\check{K}^0_{fin}(M)$, passes through
$\pi_*$.

For relation (3), suppose that $\left[ E, \A, \omega \right]$ is a cocycle
for $\check{K}^0_{fin}(M)$ with $\A_{[0]}$ invertible. We must show that
$\pi_* \left[ E, \A, \omega \right]$ is equivalent to
\begin{equation} \label{4.15}
\pi_* \left[ 0,0, \omega + \eta(\A, \infty) \right] =
\left[ 0, 0, \int_{M/B} \Todd \left(
\widehat{\nabla}^{T^V M} \right) \wedge (\omega + \eta(\A, \infty)) \right].
\end{equation}
Equivalently, letting $F(\A) \in \check{K}^0(B)$ be the class represented by
the cocycle
\begin{equation} \label{4.16}
\left[ {\mathcal H}, \pi_* \A, \eta((\pi_* \A)_0, \pi_* \A) -
\int_{M/B} \Todd \left(
\widehat{\nabla}^{T^V M} \right) \wedge \eta(\A, \infty) \right],
\end{equation}
we must show that $F(\A)$ vanishes.

\begin{lemma} \label{lem14}
Suppose that $\{\A(s)\}_{s \in [0,1]}$ is a smooth $1$-parameter
family of finite dimensional superconnections on $M$, with
$\A(0)_{[0]}$ and $\A(1)_{[0]}$ invertible.
Then $F(\A(0)) = F(\A(1))$.
\end{lemma}
\begin{proof}
Using Lemmas \ref{lem10} and \ref{lem12}, one finds that
\begin{align} \label{4.17}
F(\A(1)) - F(\A(0)) = &
\left[ 0, 0, \lim_{u \rightarrow 0} \eta((\pi_* \A(1))_u, (\pi_* \A(0))_u)
- \right. \\
& \left. \int_{M/B} \Todd \left(
\widehat{\nabla}^{T^V M} \right) \wedge \eta(\A(1), \A(0)) \right]. \notag
\end{align}
The lemma now follows from Lemma \ref{lem13}.
\end{proof}

\begin{lemma} \label{lem15}
For $v > 0$, put $\A(v) = (v-1) \A_{[0]} + \A$. Then for all $b \in B$,
for sufficiently large $v$ the operator $(\pi_* \A(v))_{[0]}$ is invertible
on ${\mathcal H}_b$.
\end{lemma}
\begin{proof}
Without loss of generality, suppose that $B$ is a point.
Writing
\begin{equation} \label{4.18}
\A = \A_{[0]} +  \A_{[1]} + X,
\end{equation}
with $X \in C^\infty(M; \End(E) \otimes \Lambda^{\ge 2}(T^*M))$,
we have
\begin{equation} \label{4.19}
(\pi_* \A(v))_{[0]} = v (\A_{[0]} \otimes \Id) + D_{\A_{[1]}} +
m(X \otimes \Id),
\end{equation}
where $D_{\A_{[1]}}$ denotes the Dirac operator on $M$ coupled to the
connection $A_{[1]}$ on $E$.
Using the fact that $\A_{[0]}$ anticommutes with $\gamma_E$, while
$D_{\A_{[1]}}$ commutes with $\gamma_E$, it follows that
\begin{align} \label{4.20}
(\pi_* \A(v))_{[0]}^2 = & v^2 (\A_{[0]}^2 \otimes \Id) +
v m([\A_{[1]}, \A_{[0]}] \otimes \Id) + \\
& v m([\A_{[0]}, X] \otimes \Id) +
\left (D_{\A_{[1]}} + m(X \otimes \Id) \right)^2. \notag
\end{align}
Since $\A_{[0]}^2$ is strictly positive, if $v$ is sufficiently large
then $(\pi_* \A(v))_{[0]}^2$ is strictly positive.  This proves the lemma.
\end{proof}

By Lemmas \ref{lem14} and \ref{lem15}, without loss of generality we can assume
that $(\pi_* \A)_{[0]}$ is invertible.  Then the cocycle in (\ref{4.16}) is
equivalent to
\begin{equation} \label{4.21}
\left[ 0, 0,
\eta((\pi_* \A)_0, \pi_* \A) + \eta(\pi_* \A, \infty) -
\int_{M/B} \Todd \left(
\widehat{\nabla}^{T^V M} \right) \wedge \eta(\A, \infty) \right].
\end{equation}
By adiabatic techniques for eta forms, (\ref{4.21}) vanishes.
The proof of this is similar to the proofs in
\cite[Theorem 5.10 and Section 6]{Bunke-Ma (2004)}
and \cite{Liu (2015)}, which deal with
adiabatic limits in the more difficult case of a double fibration, i.e.
when $E$ is itself the pushforward of a vector bundle with connection.
We omit the details.

Finally, to show that relation
(3)
passes through $\pi_*$,
suppose that $\A_0$ and $\A_1$ are two superconnections on $E$.
Then
\begin{align} \label{4.22}
& \pi_* \left[ E , \A_1, \omega + \eta(\A_0, \A_1) \right]
- \pi_* \left[ E , \A_0, \omega \right] = \\
& \left[ {\mathcal H}, \pi_* \A_1, \int_{M/B} \Todd \left(
\widehat{\nabla}^{T^V M} \right) \wedge (\omega + \eta(\A_0, \A_1)) +
\eta((\pi_* \A_1)_0,\pi_* \A_1) \right] - \notag \\
& \left[ {\mathcal H}, \pi_* \A_0, \int_{M/B} \Todd \left(
\widehat{\nabla}^{T^V M} \right) \wedge \omega +
\eta((\pi_* \A_0)_0,\pi_* \A_0) \right] = \notag \\
& \left[ 0, 0,
\int_{M/B} \Todd \left(
\widehat{\nabla}^{T^V M} \right) \wedge \eta(\A_0, \A_1)
- \lim_{u \rightarrow 0} \eta((\pi_*\A_0)_u, (\pi_*\A_1)_u)
\right]. \notag
\end{align}
This vanishes from Lemma \ref{lem13}.
\end{proof}

\begin{remark} \label{added}
If we start with a cocycle $[E, \A, \omega]$ for $\check{K}^0(M)$,
i.e. an infinite dimensional cocycle, then we could consider
defining its pushforward $\left[ {\mathcal H}, \pi_* \A, \omega \right]$
as in Definition \ref{def6}. The definition of ${\mathcal H}$, as
(\ref{4.1}), still makes perfect sense. The formal definition of
$\pi_* \A$ is the same as (\ref{4.7}). However, there is the technical
point that we want $(\pi_* \A)_{[0]}$ to be $\theta$-summable
for all $\theta >0$. To analyze this
requirement, let us assume that $B$ is a point.
Then
\begin{equation}
\pi_* \A = \A_{[0]} + D_{\A_{[1]}} +
\sum_{i \ge 2} c( \A_{[i]}),
\end{equation}
where
$D_{\A_{[1]}}$ is the Dirac-type operator on
$C^\infty \left( M; E \otimes SM \otimes ({\mathcal D}M)^{\frac12} \right)$
as constructed using the connection $\A_{[1]}$ on $\E$,
and $c$
denotes the Clifford action on $SM$.
As
\begin{align}
(\pi_* \A)^2 = & \A_{[0]}^2 + c \left( \nabla \A_{[0]} \right) +
D_{\A_{[1]}}^2 + \\
& \sum_{i \ge 2}
c \left( [\A_{[0]}, \A_{[i]}] \right) +
\sum_{i \ge 2}
c \left( \nabla \A_{[i]} \right) +
\left( \sum_{i \ge 2} c( \A_{[i]}) \right)^2, \notag
\end{align}
in order for $\pi_* \A$ to be $\theta$-summable it is reasonable to
assume that $\nabla \A_{[0]}$ is $1^{st}$-order, and if $i \ge 2$ then
$\A_{[i]}$ and $\nabla \A_{[i]}$ are $0^{th}$-order.  (That is,
to require that
$\nabla \A_{[0]} \in C^\infty(M; T^* M \otimes op^1)$, and for $i \ge 2$ that
$\A_{[i]} \in C^\infty(M; \Lambda^i(T^*M) \otimes op^0)$ and
$\nabla \A_{[i]} \in C^\infty(M; T^*M \otimes \Lambda^i(T^*M) \otimes op^0)$.)

Returning to the case of general $B$,
even with such additional assumptions on $\A$, it remains to show that
$\eta((\pi_* \A)_0, \pi_* \A)$ is well defined, i.e. that the analog
of \cite[Theorem 5.41]{Kahle (2011)} holds for infinite dimensional bundles.
One can do all this for superconnections $\A$ arising from geometric
families in the sense of \cite{Bunke-Schick (2009)}, which gives reason
to believe that it can be done more generally.
\end{remark}

\subsection{Relation with the analytic index} \label{subsect4.3}

\begin{proposition} \label{prop8}
Under the isomorphisms
$\check{K}^0 \cong \check{K}^0_{fin} \cong \check{K}^0_{stan}$,
the map $\pi_* : \check{K}^0_{fin}(M) \rightarrow \check{K}^0(B)$ coincides
with the analytic index $\Ind^{an} :
\check{K}^0_{stan}(M) \rightarrow \check{K}^0_{stan}(B)$ of
\cite[Definition 3.12]{Freed-Lott (2010)}.
\end{proposition}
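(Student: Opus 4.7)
The plan is to reduce to a finite-dimensional connection representative, apply $\pi_*$ and the comparison map $q$, and match the result to the Freed--Lott analytic index formula. By the finite-dimensional restriction of Proposition \ref{prop6}, every class in $\check{K}^0_{fin}(M)$ is represented by a cocycle $[E, \nabla^E, \omega]$ with $\nabla^E$ a compatible connection (viewed as a superconnection with vanishing degree-$0$ part). Applying Definition \ref{def6} produces
\begin{equation*}
\pi_*[E, \nabla^E, \omega] = [\mathcal{H}, \pi_*\nabla^E, \omega'],
\end{equation*}
where $(\pi_*\nabla^E)_{[0]}$ is the fiberwise Dirac operator $D_E$ twisted by $E$, the degree-$1$ part is the connection on $\mathcal{H}$ induced by $\nabla^E$ and the horizontal distribution, and
\begin{equation*}
\omega' = \int_{M/B} \Todd\!\left(\widehat{\nabla}^{T^V M}\right) \wedge \omega + \eta\!\left((\pi_*\nabla^E)_0, \pi_*\nabla^E\right).
\end{equation*}

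To land in $\check{K}^0_{stan}(B)$ I would then apply the map $q$ of Subsection \ref{subsect3.2}, which stabilizes $D_E$ by a finite-dimensional bundle $E'$ with a map $s : E' \rightarrow \mathcal{H}^-$ making $D_E^+ + s$ fiberwise surjective, and then projects onto the resulting finite-dimensional kernel. This is the very same stabilization used in \cite[Definition 3.12]{Freed-Lott (2010)} to define $\Ind^{an}$, and independence of the choices is established in Proposition \ref{prop3} here and in \cite[Section 3]{Freed-Lott (2010)} there. It therefore suffices to compare the two constructions in the special case when $\Ker(D_E)$ already forms a $\Z_2$-graded vector bundle, since both reductions handle the general case by the same stabilization.

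In that case Proposition \ref{prop5} applied to $\A = \pi_*\nabla^E$ represents $q\left(\pi_*[E, \nabla^E, \omega]\right)$ by
\begin{equation*}
\left[ \Ker(D_E),\, Q(\pi_*\nabla^E)_{[1]}Q,\, \omega' + \eta(\pi_*\nabla^E, \B) + \eta\!\left((I-Q)(\pi_*\nabla^E)(I-Q), \infty\right) \right],
\end{equation*}
where $Q$ projects onto $\Ker(D_E)$ and $\B$ is the superconnection of (\ref{3.22}). The bundle $\Ker(D_E)$ with the projected connection $Q(\pi_*\nabla^E)_{[1]}Q$ is by inspection the index bundle with the projected Bismut connection appearing in $\Ind^{an}[E, \nabla^E, \omega]$, so the remaining content of the proposition is the identity
\begin{equation*}
\eta\!\left((\pi_*\nabla^E)_0, \pi_*\nabla^E\right) + \eta(\pi_*\nabla^E, \B) + \eta\!\left((I-Q)(\pi_*\nabla^E)(I-Q), \infty\right) \equiv \widetilde{\eta}
\end{equation*}
in $\Omega^{odd}(B)/\Image(d)$, where $\widetilde{\eta}$ is the Bismut--Cheeger eta form employed by Freed--Lott.

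The main obstacle will be this last identity of eta forms. I would derive it by considering the Bismut rescaling family $(\pi_*\nabla^E)_u$ for $u \in (0,\infty)$ together with a second interpolation parameter that moves $\pi_*\nabla^E$ to $\B$ along the kernel direction. The additivity Lemma \ref{lem10} and the homotopy Lemma \ref{lem12} applied to this two-parameter family stitch the three eta contributions into a single integral of the rescaling integrand $\Str\!\left(\frac{d(\pi_*\nabla^E)_u}{du} e^{-(\pi_*\nabla^E)_u^2}\right)$ over $u \in (0,\infty)$, modulo exact forms. Convergence at the small-$u$ end is \cite[Theorem 5.41]{Kahle (2011)}, which is already invoked in Subsection \ref{subsect4.1}, while the large-$u$ behavior on the orthogonal complement of $\Ker(D_E)$ is controlled by the adiabatic analysis of \cite[Section 6]{Bunke-Ma (2004)} and \cite{Liu (2015)}; the large-$u$ contribution on the kernel itself is an exact form and so drops out. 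The resulting integral is exactly the defining expression for $\widetilde{\eta}$, completing the comparison.
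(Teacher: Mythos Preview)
Your strategy is correct and matches the paper's: reduce to the vector-bundle-kernel case, apply Proposition~\ref{prop5}, and identify the resulting sum of eta forms with the Bismut--Cheeger form $\widetilde{\eta}$ via the large-time behaviour of the rescaled superconnection. Two points deserve comment.

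First, the paper executes the eta-form identity more directly than your two-parameter homotopy. It observes that the finite-dimensional representative produced by Proposition~\ref{prop5} can equally well be written using the rescaled superconnection $(\pi_*\A)_T$ for any $T>0$ (this follows from relation~(3) of Definition~\ref{def5} and the additivity of Lemma~\ref{lem10}), giving
\[
\eta\bigl((\pi_*\A)_0,(\pi_*\A)_T\bigr)+\eta\bigl((\pi_*\A)_T,\E_T\bigr)+\eta\bigl((I-Q)(\pi_*\A)_T(I-Q),\infty\bigr)
\]
as the differential-form part. One then simply lets $T\to\infty$: the first term converges to $\widetilde{\eta}$ and the other two tend to zero. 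This avoids assembling a two-parameter family and invoking Lemma~\ref{lem12}.

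Second, your citations for the large-$u$ estimates are off. The references \cite{Bunke-Ma (2004)} and \cite{Liu (2015)} concern adiabatic limits for \emph{double} fibrations and were invoked in the paper only in the proof of Proposition~\ref{prop7} (showing that relation~(2) is respected). For the single fibration here, the relevant large-time analysis is the standard one from \cite[Theorem~9.23 and Section~9.3]{Berline-Getzler-Vergne (1996)}: Theorem~9.23 gives convergence of the first term to $\widetilde{\eta}$, and the estimates of Section~9.3 force the remaining two terms to zero as $T\to\infty$.
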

\begin{proof}
Suppose first that $\Ker((\pi_*\A)_{[0]})$ is a ($\Z_2$-graded) vector bundle
on $B$. Let $Q$ be orthogonal projection onto $\Ker((\pi_*\A)_{[0]})$.
For $T > 0$, put
\begin{equation} \label{4.23}
\E_T = (I-Q) (\pi_* \A)_T (I-Q) + Q (\pi_* \A)_{[1]} Q.
\end{equation}
By Proposition \ref{prop5} and Definition \ref{def6},
\begin{align} \label{4.24}
\pi_* \left[ E, \A, \omega \right] =
& \left[ \Ker(\A_{[0]}), Q (\pi_* \A)_{[1]} Q,
\int_{M/B} \Todd \left(
\widehat{\nabla}^{T^V M} \right) \wedge \omega + \right. \\
& \left. \eta( (\pi_* \A)_0, \pi_* \A) + \eta( \pi_* \A, \E_1) + \right.
\notag \\
& \left. \eta( (I-Q) \pi_* \A (I-Q), \infty) \right] \notag \\
= & \left[ \Ker(\A_{[0]}), Q (\pi_* \A)_{[1]} Q,
\int_{M/B} \Todd \left(
\widehat{\nabla}^{T^V M} \right) \wedge \omega + \right. \notag \\
& \left. \eta( (\pi_* \A)_0, (\pi_* \A)_T)) + \eta( (\pi_* \A)_T,\E_T) +
\right. \notag \\
& \left. \eta( (I-Q) (\pi_* \A)_T (I-Q), \infty) \right]. \notag
\end{align}
From Lemma \ref{lem8},
\begin{equation} \label{4.25}
\eta( (\pi_* \A)_0, (\pi_* \A)_T)) = \lim_{u \rightarrow 0}
\int_u^T
\Str \left( \frac{d (\pi_* \A)_t}{dt}
e^{- \: (\pi_* \A)_t^2} \right) dt.
\end{equation}
From \cite[Theorem 9.23]{Berline-Getzler-Vergne (1996)}, the limit
$\lim_{T \rightarrow \infty} \eta( (\pi_* \A)_0, (\pi_* \A)_T))$ exists;
it is called $\widetilde{\eta}$ in
\cite{Freed-Lott (2010)},
after the Bismut-Cheeger eta form
\cite{Bismut-Cheeger (1989)}.
Using the estimates in
\cite[Section 9.3]{Berline-Getzler-Vergne (1996)}, we have
$\lim_{T \rightarrow \infty} \eta( (\pi_* \A)_T,\E_T) = 0$ and
$\lim_{T \rightarrow \infty} \eta( (I-Q) (\pi_* \A)_T (I-Q), \infty) = 0$.
Thus
\begin{equation} \label{4.26}
\pi_* \left[ E, \A, \omega \right] =
\left[ \Ker(\A_{[0]}), Q (\pi_* \A)_{[1]} Q,
\int_{M/B} \Todd \left(
\widehat{\nabla}^{T^V M} \right) \wedge \omega +
\widetilde{\eta} \right],
\end{equation}
which is the same as \cite[Definition 3.12]{Freed-Lott (2010)}.
If $\Ker((\pi_*\A)_{[0]})$ is not a vector bundle then
we can effectively deform to that case, as in Subsection \ref{subsect3.2} and
\cite[Section 7.12]{Freed-Lott (2010)}.
\end{proof}

\section{Odd differential K-groups} \label{sect5}

In this section we indicate how the results of the previous
sections extend to the odd differential $K$-group $\check{K}^1(\cdot)$.
Some of the arguments are similar to those of the previous sections,
so we do not write them out in detail.  For this reason,
we label the results of this section as ``claims''.

We use the odd Chern characters of Quillen
\cite[Section 5]{Quillen (1985)}.
Let $M$ be a smooth manifold.
Let ${\mathcal H}$ be a Hilbert bundle over $M$ as in
Subsection \ref{subsect2.1}, except ungraded.
A superconnection $\A$ on ${\mathcal H}$ is defined as in
Definition \ref{def3}, except removing the
oddness condition on $\A_{[0]}$ and the parity
condition on $\A_{[i]}$. Note that the only grading on $\Omega^*(M; op^*)$
is the one coming from $\Omega^*(M)$.
Let $\sigma$ be a new formal odd variable
with $\sigma^2 = 1$. Put
\begin{equation} \label{5.1}
\A_\sigma = \sigma \A_{[0]} + \A_{[1]} + \sigma \A_{[2]} + \ldots,
\end{equation}
so that $\A_{\sigma}$ has odd total parity. Define
$\Tr_\sigma(A + B \sigma) = \Tr(B)$ and
\begin{equation} \label{5.2}
\Ch(\A) = \Tr_\sigma \left( e^{- \A_{\sigma}^2} \right) \in
\Omega^{odd}(M).
\end{equation}
Define the eta forms similarly to
(\ref{2.9}) and (\ref{2.17}), as elements of $\Omega^{even}(M)/\Image(d)$,
using $\Tr_\sigma$ instead of $\Tr_s$.

\begin{definition} \label{def7}
A cocycle for $\check{K}^1(M)$
is a triple $\left[ \mathcal{H}, \A, \omega \right]$
where
\begin{enumerate}
\item
$\mathcal{H}$ is an ungraded Hilbert bundle over $M$,
\item $\A$ is a superconnection on ${\mathcal H}$ and
\item $\omega \in \Omega^{even}(M) /\Image(d)$.
\end{enumerate}
\end{definition}

\begin{definition} \label{def8}
The group $\check{K}^1(M)$ is the quotient of the free abelian
group generated by the
isomorphism classes of
cocycles, by the subgroup generated by
the following relations :
\begin{enumerate}
\item If $\left[ \mathcal{H}, \A, \omega \right]$ and
$\left[ \mathcal{H}', \A', \omega' \right]$ are cocycles then
\begin{equation} \label{5.4}
\left[ \mathcal{H}, \A, \omega \right]+
\left[ \mathcal{H}', \A', \omega' \right]=\left[ \mathcal{H} \oplus \mathcal{H}', \A \oplus \A', \omega+\omega' \right].
\end{equation}
\item If $\A_{[0]}$ is invertible then
\begin{equation} \label{5.5}
\left[ \mathcal{H}, \A, \omega \right] = \left[ 0, 0, \omega+\eta(\A, \infty) \right].
\end{equation}
\item Suppose that $\A_0$ and $\A_1$ are superconnections on
$\mathcal{H}$ such that
$\A_{0, [0]}- \A_{1, [0]} \in \Omega^0(M; op^0)$. Then
\begin{equation} \label{5.6}
\left[ \mathcal{H}, \A_0, \omega \right]=\left[ \mathcal{H}, \A_1, \omega+\eta(\A_0, \A_1) \right].
\end{equation}
\end{enumerate}
\end{definition}

It follows from the definitions that there is a map $\check{K}^1(M) \rightarrow
\Omega^{odd}(M)$ that sends a cocycle $[{\mathcal H}, \A, \omega]$ to
$\Ch(\A) + d\omega$.

Let $\pi : M \rightarrow B$ be a fiber bundle with
odd dimensional compact fibers.  As in Section \ref{sect4},
we assume that $\pi$ has a Riemannian structure and a
differential $spin^c$-structure.  Given a cocycle
$[E, \A,\omega]$ for $\check{K}^0_{fin}(M)$, we define the
pushforward $\pi_*[E, \A, \omega]  =
[{\mathcal H}, \pi_* \A, \omega^\prime]$ as in
Definition \ref{def6}, where ${\mathcal H}$ is now ungraded.

\begin{claim}
The map $\pi_*$ on cocycles passes to a
map $\pi_* : \check{K}^0_{fin}(M) \rightarrow \check{K}^1(B)$.
\end{claim}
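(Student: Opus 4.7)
The plan is to mirror the proof of Proposition \ref{prop7} step by step, replacing $\Tr_s$ by $\Tr_\sigma$ and even fiber dimension by odd throughout, and checking that the three relations of Definition \ref{def5} for $\check{K}^0_{fin}(M)$ are sent by $\pi_*$ to identities in $\check{K}^1(B)$.

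First, relation (1) is immediate from the definitions, since the pushforward commutes with direct sums at the level of Hilbert bundles, superconnections, and eta forms. For relation (3), suppose $\A_0$ and $\A_1$ are superconnections on $E$ with $\A_{0,[0]} - \A_{1,[0]} \in \Omega^0(M; op^0)$; I would establish an odd analog of Lemma \ref{lem13}, namely
\begin{equation}
\lim_{u \to 0} \eta((\pi_* \A(1))_u, (\pi_* \A(0))_u) = \int_{M/B} \Todd \left( \widehat{\nabla}^{T^V M} \right) \wedge \eta(\A(1), \A(0))
\end{equation}
in $\Omega^{even}(B)/\Image(d)$, by the same $ds\wedge \partial_s$ superconnection construction and applying the odd local index theorem from \cite[Theorem 5.33]{Kahle (2011)} adapted to odd fibers (i.e. using $\Tr_\sigma$). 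The difference $\pi_*[E,\A_1,\omega+\eta(\A_0,\A_1)] - \pi_*[E,\A_0,\omega]$ is then a degree-$1$ cocycle of the form $[0,0,\cdot]$ whose correction form vanishes by this limit identity.

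For relation (2), suppose $\A_{[0]}$ is invertible, and let $F(\A) \in \check{K}^1(B)$ be the difference class
\begin{equation}
\left[ {\mathcal H}, \pi_* \A, \eta((\pi_* \A)_0, \pi_* \A) - \int_{M/B} \Todd \left( \widehat{\nabla}^{T^V M} \right) \wedge \eta(\A, \infty) \right].
\end{equation}
By the odd analog of Lemma \ref{lem14}, $F(\A)$ is invariant under smooth homotopies of $\A$ through superconnections with invertible degree-zero part; the argument uses Lemmas \ref{lem10} and \ref{lem12} together with the identity established above, and carries over verbatim since both sides remain polynomial in the same Duhamel expansion. Then, by the odd analog of Lemma \ref{lem15}, the rescaling $\A(v) = (v-1)\A_{[0]} + \A$ makes $(\pi_* \A(v))_{[0]}$ invertible on each fiber for large $v$: the only change is that $D_{\A_{[1]}}$ no longer anticommutes with a grading, but the computation of $(\pi_*\A(v))_{[0]}^2$ is identical and strict positivity of $\A_{[0]}^2$ still dominates for $v$ large. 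This reduces matters to showing $F(\A) = 0$ when $(\pi_* \A)_{[0]}$ is invertible, in which case the cocycle becomes
\begin{equation}
\left[ 0, 0, \eta((\pi_* \A)_0, \pi_* \A) + \eta(\pi_* \A, \infty) - \int_{M/B} \Todd \left( \widehat{\nabla}^{T^V M} \right) \wedge \eta(\A, \infty) \right].
\end{equation}

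The hard part will be showing that this residual form vanishes in $\Omega^{even}(B)/\Image(d)$; as in the even case, this is an adiabatic limit computation for odd-degree eta forms, paralleling the arguments in \cite[Section 6]{Bunke-Ma (2004)} and \cite{Liu (2015)}. In the odd setting the relevant adiabatic identities have the same formal structure once one works with $\A_\sigma$ and $\Tr_\sigma$, but some care is needed because the heat-kernel estimates underlying the adiabatic limit are organized differently for odd-dimensional fibers (for example, one uses the odd Bismut superconnection and handles the $\sigma$-trace). I would invoke those adiabatic results in their odd formulation, noting that the combinatorics of $\Tr_\sigma$ versus $\Tr_s$ produce the same cancellation pattern, and this completes the verification that $\pi_*$ descends to $\check{K}^0_{fin}(M) \to \check{K}^1(B)$.
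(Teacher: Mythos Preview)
Your proposal is correct and follows essentially the same approach as the paper: the paper's proof consists of the single sentence ``The proof is similar to that of Proposition \ref{prop7},'' and you have simply filled in the details of that similarity, identifying the necessary modifications (replacing $\Tr_s$ by $\Tr_\sigma$, even by odd fibers, and invoking the odd versions of Lemmas \ref{lem13}--\ref{lem15} and the adiabatic results). There is nothing to compare---you have written out what the paper leaves implicit.
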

\begin{proof}
The proof is similar to that of
Proposition \ref{prop7}.
\end{proof}

Let $p_1 : S^1 \times M \rightarrow S^1$ and
$p : S^1 \times M \rightarrow M$ be the projection maps.
We define a suspension map $p^!$ on cocycles for
$\check{K}^1(M)$ as follows.  Consider the product bundle
$q : S^1 \times S^1 \rightarrow S^1$. Give it a product Riemannian
structure with circle fibers of constant length.
Let $(V, \nabla^V)$ be the
Hermitian line
bundle on $S^1 \times S^1$ with connection of constant curvature,
whose restriction to $q^{-1}(e^{i \theta})$
is the flat bundle on $S^1$ with holonomy $e^{i \theta}$.
The  sections of $V$ form a Hilbert bundle ${\mathcal K}$ over
$S^1$, on whose sections the
Bismut superconnection acts.
Trivializing $V$ over $(0, 2 \pi) \times S^1$ and giving coordinates
$(\theta, \phi)$ to the latter,
we can write this Bismut
superconnection on ${\mathcal K} \Big|_{(0, 2 \pi)}$
in the form $(\partial_{\phi} - \frac{i}{2\pi} \theta) +
d \theta \wedge \partial_\theta$.
Let $[{\mathcal H}, \A, \omega]$ be a cocycle for
$\check{K}^1(M)$.
Put
\begin{align} \label{5.7}
{\mathcal H}^\prime = & p^*{\mathcal H} \otimes
\left( p_1^* {\mathcal K} \oplus p_1^* {\mathcal K} \right), \\
\gamma_{{\mathcal H}^\prime} = &
\begin{pmatrix}
I & 0 \\
0 & -I
\end{pmatrix}, \notag \\
\sigma = &
\begin{pmatrix}
0 & I \\
I & 0
\end{pmatrix}, \notag \\
\A^\prime = &
\begin{pmatrix}
d \theta \wedge \partial_\theta & \partial_{\phi} - \frac{i}{2\pi}
\theta \\
- \partial_{\phi} + \frac{i}{2\pi} \theta
 & d \theta \wedge \partial_\theta
\end{pmatrix} + \A_{\sigma}, \notag \\
\omega^\prime = & p_1^*(d\theta) \wedge p^* \omega. \notag
\end{align}
Then $p^! [{\mathcal H}, \A, \omega] =
[{\mathcal H}^\prime, \A^\prime, \omega^\prime]$, a cocycle for
$\check{K}^0(S^1 \times M)$.

\begin{claim}
The map $p^!$ on cocycles passes to a map
$p^! : \check{K}^1(M) \rightarrow
\check{K}^0(S^1 \times M)$.
\end{claim}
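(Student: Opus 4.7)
The plan is to verify that $p^!$ respects each of the three relations in Definition \ref{def8}. The direct sum relation (1) is immediate from the tensorial nature of the construction (pullback and tensoring distribute over direct sums), so the substantive work concerns relations (2) and (3); in each case the reduction to Definition \ref{def5}'s relations is essentially formal, followed by an eta-form identity which is the main obstacle.

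For relation (3), let $\A_0, \A_1$ be superconnections on $\mathcal{H}$ with $\A_{0,[0]} - \A_{1,[0]} \in \Omega^0(M; op^0(\mathcal{H}))$, and write $\A_j' = \mathcal{D} + (\A_j)_\sigma$ for their suspensions, where $\mathcal{D}$ denotes the matrix part in \eqref{5.7}. Then
\begin{equation*}
\A'_{0,[0]} - \A'_{1,[0]} \;=\; \sigma\cdot p^*(\A_{0,[0]} - \A_{1,[0]}),
\end{equation*}
which lies in $\Omega^0(S^1\times M;\, op^0(\mathcal{H}'))$. Hence relation (3) of Definition \ref{def5} applies in $\check{K}^0(S^1\times M)$, and the verification reduces to the identity
\begin{equation*}
\eta(\A'_0, \A'_1) \;=\; p_1^*(d\theta) \wedge p^*\eta(\A_0, \A_1)
\end{equation*}
in $\Omega^{odd}(S^1\times M)/\Image(d)$.

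For relation (2), suppose $\A_{[0]}$ is invertible. Letting $A = \partial_\phi - \frac{i}{2\pi}\theta$ on $\mathcal{K}$, the operator $A$ is skew-adjoint and commutes with $\A_{[0]}$, which acts on a disjoint tensor factor; moreover the matrix $\sigma$ anticommutes with the off-diagonal matrix $\bigl(\begin{smallmatrix} 0 & I \\ -I & 0\end{smallmatrix}\bigr)$ appearing in $\mathcal{D}_{[0]}$, so the cross terms vanish. A direct computation then gives
\begin{equation*}
(\A'_{[0]})^2 \;=\; -A^2\otimes I \;+\; I\otimes \A_{[0]}^2 \;\ge\; I\otimes \A_{[0]}^2 \;>\; 0,
\end{equation*}
so $\A'_{[0]}$ is fiberwise invertible. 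Applying relation (2) of Definition \ref{def5}, the verification reduces to
\begin{equation*}
\eta(\A', \infty) \;=\; p_1^*(d\theta) \wedge p^*\eta(\A, \infty)
\end{equation*}
in $\Omega^{odd}(S^1\times M)/\Image(d)$.

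The main obstacle is establishing these two eta-form identities. The key structural observation is that the suspension datum $\mathcal{D}$ and the bundle datum $\A_\sigma$ act on independent tensor factors in $\mathcal{H}' = p^*\mathcal{H}\otimes (p_1^*\mathcal{K}\oplus p_1^*\mathcal{K})$, and their matrix generators graded-commute. Consequently the Duhamel expansion of $e^{-t(\A')^2_\sigma}$ factorizes as a product of expansions for $\mathcal{D}$ and for $\A_\sigma$, and the supertrace $\Tr_\sigma$ decomposes into a trace on the $\mathcal{K}$-factors with $\Tr_\sigma$ on the $\mathcal{H}$-factors. A direct calculation of the $\mathcal{K}$-factor, using that the Dirac-type operator on $q^{-1}(e^{i\theta})$ coupled to $V$ has spectrum of the form $\{n - \theta/(2\pi): n\in\mathbb{Z}\}$, yields precisely the $1$-form $p_1^*(d\theta)$ modulo exact forms on $S^1$. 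Substituting this factorization into the defining integrals \eqref{2.9} and \eqref{2.17} produces the required identities, completing the proof.
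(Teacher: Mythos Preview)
The paper gives no proof of this claim; Section~\ref{sect5} states at the outset that the arguments parallel those of earlier sections and are therefore omitted. The natural template is the multiplicativity proposition of Subsection~\ref{mult}, and your proof follows that template faithfully: relation (1) is immediate, and for relations (2) and (3) you first verify that the structural hypotheses on $\A'$ (invertibility of $\A'_{[0]}$, respectively $\A'_{0,[0]}-\A'_{1,[0]}\in op^0$) are inherited from those on $\A$, then reduce to an eta-form identity. Your verification of $(\A'_{[0]})^2 = -A^2\otimes I + I\otimes\A_{[0]}^2$ via the anticommutation of $\sigma$ with the off-diagonal matrix is correct, so the reductions are sound.

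The factorization sketch at the end is the right idea but is where the argument needs the most care. Two points deserve more detail than you give. First, the supertrace on $\mathcal{H}'$ does not split as a simple product of traces, because both $\mathcal{D}$ and $\A_\sigma$ act on the shared $\C^2$ factor; one has to track that $\Str_{\C^2}$ picks out exactly the $\sigma_z$-component, which arises from pairing one $i\sigma_y$ from the $\mathcal{D}$-side with one $\sigma_x$ from the $\A_\sigma$-side, and this is what converts $\Str$ into $p_1^*(d\theta)\wedge p^*\Tr_\sigma(\cdots)$. Second, for the identity $\eta(\A',\infty)=p_1^*(d\theta)\wedge p^*\eta(\A,\infty)$, the rescaling $(\A')_t$ of (\ref{2.16}) scales $\mathcal{D}_{[0]}$ and $\sigma\A_{[0]}$ simultaneously; while this does split as $\mathcal{D}_t+(\A_t)_\sigma$, the cleanest route to the identity is a homotopy argument as in the proof of (\ref{suffices}) in Subsection~\ref{mult}, invoking the analogues of Lemmas~\ref{lem8} and~\ref{lem12}, rather than a direct Duhamel factorization. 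With these points filled in, your argument is complete and is the one the paper intends.
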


Let $K^1(M)$ denote the group defined by similar generators and relations
as our definition of $\check{K}^1(M)$, except leaving out the
differential form components and the parts of the superconnection beyond
$\A_{[0]}$. As in our constructions on differential K-groups,
there are maps
$p_* : K^0(S^1 \times M) \rightarrow K^1(M)$ and
$p^! : K^1(M) \rightarrow K^0(S^1 \times M)$.
As in Proposition \ref{prop6}, $K^0(\cdot) \cong K^0_{stan}(\cdot)$ and so
\begin{align} \label{5.8}
& K^0(S^1 \times M) \cong
K^0_{stan}(S^1 \times M) \cong
K^0_{stan}(M) \oplus K^1_{stan}(M) \cong \\
& K^0(M) \oplus K^1_{stan}(M). \notag
\end{align}
Choosing a point $\star \in S^1$ and letting $i : \{ \star \} \times M
\rightarrow S^1 \times M$ be inclusion, it follows that
the group $K^0(S^1 \times M)$ is a direct sum of $K^0(M)$ and
$\Ker(i^* : K^0(S^1 \times M) \rightarrow K^0(M))$.

\begin{claim} \label{claim3}
The composite map $p_* \circ p^!$ is the identity on
$K^1(M)$, and $p^! \circ p_*$ is projection from
$K^0(S^1 \times M)$ onto
$\Ker(i^* : K^0(S^1 \times M) \rightarrow K^0(M))$.
\end{claim}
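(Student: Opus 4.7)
The plan is to reduce both identities to the Künneth decomposition together with the standard suspension isomorphism for topological $K$-theory. At the level of ungraded $K$-groups, the construction of $p^!$ gives the external product with a fixed class $\alpha \in K^1(S^1)$ represented by the pair $[\mathcal{K} \oplus \mathcal{K}, \widetilde{\A}]$, where $\widetilde{\A}$ is the $2 \times 2$ Bismut-type superconnection built from the Poincar\'e line bundle $(V, \nabla^V)$. Thus for $c = [\mathcal{H}, \A] \in K^1(M)$, one has $p^![c] = c \boxtimes \alpha$ in the K\"unneth splitting
\begin{equation}
K^0(S^1 \times M) \cong (K^0(S^1) \otimes K^0(M)) \oplus (K^1(S^1) \otimes K^1(M)).
\end{equation}
Similarly, $p_*$ is the analytic family index along the $S^1$-fibers, so it is compatible with the external product and reduces on the $K^*(S^1)$ factor to the usual pushforward $\pi_*^{S^1} : K^*(S^1) \to K^{*-1}(\pt)$.

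For the first identity, I would apply the projection formula to obtain $p_*(p^![c]) = p_*(c \boxtimes \alpha) = c \cdot \pi_*^{S^1}(\alpha)$. The class $\alpha$ generates $K^1(S^1) \cong \Z$, and a direct spectral-flow computation for the family $\{\partial_\phi - \tfrac{i \theta}{2\pi}\}_{\theta \in S^1}$ of Dirac operators on $S^1$, or equivalently the Atiyah--Singer family index theorem, gives $\pi_*^{S^1}(\alpha) = 1 \in K^0(\pt)$. Hence $p_* \circ p^! = \id_{K^1(M)}$.

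For the second identity, the section $\star : \pt \to S^1$ induces the splitting $K^0(S^1 \times M) = p^* K^0(M) \oplus \Ker(i^*)$, which under the K\"unneth decomposition above is exactly $(K^0(S^1) \otimes K^0(M)) \oplus (K^1(S^1) \otimes K^1(M))$. On the first summand, the projection formula gives $p_*(p^*[E]) = [E] \cdot \pi_*^{S^1}[\mathbf{1}_{S^1}]$, and $\pi_*^{S^1}[\mathbf{1}_{S^1}] \in K^1(\pt) = 0$, so $p^! \circ p_*$ vanishes there. On the second summand, the first identity forces $p^!$ to be injective, while $i^* \alpha = 0$ shows that $\Image(p^!) \subseteq \Ker(i^*)$; since both $K^1(M)$ and $\Ker(i^*) \cong K^1(S^1) \otimes K^1(M)$ have the same abstract structure, $p^!$ identifies them. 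Therefore for any $y = p^![c] \in \Ker(i^*)$ we have $p^!(p_*(y)) = p^!(p_*(p^![c])) = p^![c] = y$, i.e.\ $p^! \circ p_*$ is the identity on $\Ker(i^*)$.

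The main technical point is verifying that the explicit analytic formulas (\ref{5.7}) for $p^!$ and the push-forward $p_*$ from Section \ref{sect4} really do realize the topological external product with $\alpha$ and the analytic family index respectively; the core analytic input is the spectral-flow computation $\pi_*^{S^1}(\alpha) = 1$, which is essentially a one-dimensional avatar of Atiyah--Singer. Once this correspondence is set up, the structure of the argument is entirely formal: projection formula, the Künneth decomposition, and the triangular vanishing $\pi_*^{S^1}[\mathbf{1}] = 0$.
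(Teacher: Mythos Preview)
The paper does not supply a proof of this claim; it is one of the results in Section~\ref{sect5} that the authors deliberately label a ``claim'' and leave unproven. So there is no paper proof to compare against, and I will assess your proposal on its own merits.

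Your argument for $p_* \circ p^! = \id_{K^1(M)}$ is fine: identifying $p^!$ with external product by the Poincar\'e class $\alpha \in K^1(S^1)$, invoking the projection formula, and computing $\pi_*^{S^1}(\alpha)=1$ by spectral flow is the standard route, and the analytic content (that the formulas in~(\ref{5.7}) and the pushforward of Section~\ref{sect4} realize these topological operations) is exactly the ``technical point'' you flag.

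The argument for the second identity, however, has a genuine gap. You correctly observe that $p^! \circ p_*$ is idempotent, vanishes on $p^*K^0(M)$, and has image in $\Ker(i^*)$; what remains is to show that $p^!$ \emph{surjects} onto $\Ker(i^*)$. Your justification---``both $K^1(M)$ and $\Ker(i^*)\cong K^1(S^1)\otimes K^1(M)$ have the same abstract structure''---is circular. The K\"unneth formula you invoke is for \emph{standard} $K$-theory, so the $K^1$-group appearing there is $K^1_{stan}(M)$ (as the paper itself writes in~(\ref{5.8})), whereas the domain of $p^!$ is the paper's a~priori different group $K^1(M)$. The isomorphism $K^1(M)\cong K^1_{stan}(M)$ is exactly the content of the \emph{next} claim, whose proof uses Claim~\ref{claim3}; you cannot appeal to it here. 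And even if the two groups were abstractly isomorphic, an injective homomorphism between isomorphic abelian groups need not be surjective.

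What you actually need is that $p_*$ is injective on $\Ker(i^*)$, equivalently $\Ker(p_*)\subseteq p^*K^0(M)$. One non-circular route is to compute $p^!\circ p_*$ directly as an endomorphism of $K^0(S^1\times M)\cong K^0_{stan}(S^1\times M)$: represent a class in $\Ker(i^*)$ by a finite-dimensional cocycle, push forward along the $S^1$-fiber to obtain an ungraded Hilbert bundle on $M$ with fiberwise Dirac operator, apply~(\ref{5.7}), and verify at the cocycle level (via Fourier decomposition on the circle fiber) that one recovers the original class. This direct verification is presumably what the authors have in mind when they say the arguments are ``similar to those of the previous sections''.
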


\begin{claim}
The group $K^1(M)$ is isomorphic to the standard $K$-group
$K^1_{stan}(M)$.
\end{claim}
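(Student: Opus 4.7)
The plan is to reduce the claim to the already-established isomorphism for the even case via the suspension map $p^!$, and then invoke the standard topological Bott/suspension decomposition. First I would observe that dropping the differential form data from the arguments in Subsections \ref{subsect3.1}--\ref{subsect3.4} gives a natural isomorphism $K^0(\cdot) \cong K^0_{stan}(\cdot)$: indeed, the construction of $q(c)$ in Subsection \ref{subsect3.2} and the inverse $r$ in the proof of Proposition \ref{prop6} never use the $\omega$-component for the underlying $K$-theoretic equivalence, only for the differential refinement; the same stabilization by a finite rank vector bundle produces the required finite dimensional representative.

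Next, by Claim \ref{claim3}, the composition $p_* \circ p^!$ is the identity and $p^! \circ p_*$ is the projector onto $\Ker(i^* \colon K^0(S^1 \times M) \to K^0(M))$, where $i : \{\star\} \times M \hookrightarrow S^1 \times M$. Hence $p^!$ identifies $K^1(M)$ with $\Ker(i^*)$. Combined with (\ref{5.8}), which uses $K^0(S^1 \times M) \cong K^0_{stan}(S^1 \times M)$ from the previous paragraph together with the standard suspension isomorphism $K^0_{stan}(S^1 \times M) \cong K^0_{stan}(M) \oplus K^1_{stan}(M)$ in topological $K$-theory, we get
\begin{equation*}
K^1(M) \;\cong\; \Ker\!\left( i^* : K^0(S^1 \times M) \to K^0(M) \right) \;\cong\; K^1_{stan}(M).
\end{equation*}

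The main technical point to verify is the naturality of $K^0(\cdot) \cong K^0_{stan}(\cdot)$ under the pullback $i^*$, so that the two kernels are identified. This is where I expect the bulk of the care to lie: given a cocycle $c = [\mathcal{H}, \A_{[0]}]$ on $S^1 \times M$, one chooses a finite rank bundle $E$ with a map $s : E \to \mathcal{H}^-$ making $\A_{[0]}^+ + s$ surjective, forms the modified operator $\widehat{\A}$ as in (\ref{3.8}), and passes to the finite-dimensional $\Z_2$-graded vector bundle $\Ker(\widehat{\A}_{[0]})$. Restricting all of these choices along $i$ produces compatible data for $i^* c$, yielding the corresponding finite dimensional representative of $q(i^* c)$, and independence from the auxiliary choices (the analog of Proposition \ref{prop3} without forms) ensures that this gives a well-defined map commuting with $i^*$ and $q$.

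Once naturality is in hand, the argument closes formally: $p^!$ lifts $K^1(M)$ isomorphically onto the $i^*$-kernel in $K^0(S^1 \times M) \cong K^0_{stan}(S^1 \times M)$, which by the standard suspension isomorphism is exactly $K^1_{stan}(M)$, so the resulting composite $K^1(M) \to K^1_{stan}(M)$ is an isomorphism. The hardest part of the write-up will be confirming that the ad hoc stabilization choices in the proof of Proposition \ref{prop6} can be made naturally with respect to pullback along $i$; everything else is a diagram chase once that functoriality is explicit.
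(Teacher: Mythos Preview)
Your proposal is correct and follows essentially the same route as the paper: use Claim \ref{claim3} to identify $K^1(M)$ with $\Ker(i^* : K^0(S^1 \times M) \to K^0(M))$, then invoke the isomorphism $K^0 \cong K^0_{stan}$ together with the standard suspension decomposition to conclude $K^1(M) \cong K^1_{stan}(M)$. Your explicit discussion of naturality of $q$ under $i^*$ is a point the paper simply takes for granted in passing from one kernel to the other, so you are being more careful rather than taking a different approach.
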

\begin{proof}
Using Claim \ref{claim3},
\begin{align} \label{5.9}
& K^1(M) \cong
\Ker(i^* : K^0(S^1 \times M) \rightarrow K^0(M)) \cong \\
& \Ker(i^* : K^0_{stan}(S^1 \times M) \rightarrow K^0_{stan}(M))
\cong K^1_{stan}(M). \notag
\end{align}
The claim follows.
\end{proof}

Let $\Omega^{even}(M)_K$ denote the union of affine subspaces of closed forms
whose de Rham cohomology class lies in the image of
$\Ch : K^0(M) \rightarrow \Omega^{even}(M)$.

\begin{claim}
With our definition of $\check{K}^1(M)$,
there is a short exact sequence
\begin{equation} \label{5.10}
0 \rightarrow \frac{\Omega^{even}(M)}{\Omega^{even}(M)_K} \rightarrow
\check{K}^1(M) \rightarrow K^1(M) \rightarrow 0.
\end{equation}
\end{claim}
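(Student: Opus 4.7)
The plan is to define the two maps in the sequence and verify exactness at each spot, reducing the delicate injectivity of $\iota$ to the classical short exact sequence for $\check{K}^0_{stan}$ via the suspension $p^!$ of this section.

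First I would define $\pi \colon \check{K}^1(M) \to K^1(M)$ by $[\mathcal{H}, \A, \omega] \mapsto [\mathcal{H}, \A_{[0]}]$. The three relations in Definition \ref{def8} reduce, after discarding the differential form slots, to the three relations defining $K^1(M)$ in the preceding claim (direct sum, invertibility collapse, and $op^0$-perturbation), so $\pi$ is well-defined. Surjectivity is immediate: given $[\mathcal{H}, P] \in K^1(M)$, choose any connection $\nabla$ on $\mathcal{H}$ (available by a diffeological partition-of-unity argument) and set $\A = P + \nabla$, so that $\pi([\mathcal{H}, \A, 0]) = [\mathcal{H}, P]$. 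I define $\iota([\omega]) = [0, 0, \omega]$, with its well-definedness addressed in the last paragraph. Clearly $\pi \circ \iota = 0$ on the nose.

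For exactness at the middle, suppose $c = [\mathcal{H}, \A, \omega]$ satisfies $\pi(c) = 0$. The identity $[\mathcal{H}, \A_{[0]}] = 0$ in $K^1(M)$ is witnessed by a finite chain of elementary $K^1$-moves, each of which lifts to a $\check{K}^1$-move that alters only the differential form slot by an explicit $\eta$-correction (choosing an arbitrary connection extension for each auxiliary $K^1$-cycle). Executing the lifted chain in $\check{K}^1(M)$ starting from $c$ reduces $c$ to a cocycle of the form $[0, 0, \omega']$, so that $c = \iota([\omega']) \in \Image(\iota)$.

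The main obstacle is the injectivity of $\iota$, which also gives its well-definedness: I must show $[0,0,\omega] = 0$ in $\check{K}^1(M)$ iff $\omega \in \Omega^{even}(M)_K$. The curvature map $[\mathcal{H}, \A, \omega] \mapsto \Ch(\A) + d\omega$ forces $\omega$ closed whenever $[0,0,\omega] = 0$. To identify its de Rham class I would suspend: from formula (\ref{5.7}) one computes $p^! [0, 0, \omega] = [0, 0, p_1^*(d\theta) \wedge p^*\omega]$ in $\check{K}^0(S^1 \times M)$. Theorem \ref{thm1} transports this to $\check{K}^0_{stan}(S^1 \times M)$, and the classical short exact sequence $0 \to \Omega^{odd}/\Omega^{odd}_K \to \check{K}^0_{stan} \to K^0 \to 0$ then implies that vanishing of $p^![0,0,\omega]$ places $[p_1^*(d\theta) \wedge p^*\omega]_{dR}$ in $\Image(\Ch \colon K^1(S^1 \times M) \to H^{odd}(S^1 \times M; \R))$. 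The K\"unneth splitting $K^1(S^1 \times M) \cong K^1(M) \oplus K^0(M)$, consistent with the suspension identities of Claim \ref{claim3}, isolates the $d\theta$-component as the image of $\Ch \colon K^0(M) \to H^{even}(M; \R)$, yielding $\omega \in \Omega^{even}(M)_K$. The converse implication runs the same construction in reverse, invoking a differential refinement of $p_* \circ p^! = \id$ to deduce $[0,0,\omega] = 0$ from the vanishing of $p^![0,0,\omega]$ once $\omega \in \Omega^{even}(M)_K$. Making this refinement precise, together with verifying the form-level action of $p^!$ asserted above, is the technical heart of the proof.
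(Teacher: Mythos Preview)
The paper gives no proof of this statement; it is one of the ``claims'' in Section~\ref{sect5} whose arguments are said to be similar to those of earlier sections, and no details are supplied. So there is nothing to compare against directly, and your proposal must stand on its own.

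Your outline for $\pi$, its surjectivity, and exactness at the middle is fine (the lifting-of-relations argument is standard, if a bit telegraphic). The real issue is the injectivity of $\iota$, and here there is a genuine gap. You reduce both directions to a \emph{differential} refinement of Claim~\ref{claim3}, namely that $p_* \circ p^! = \id$ on $\check{K}^1(M)$. But Claim~\ref{claim3} is stated and used only at the level of ordinary $K$-theory, and the paper never asserts (let alone proves) the differential version. Worse, there is a circularity hazard: the natural way to prove a differential $p_* \circ p^! = \id$ would be to compare the two sides via the short exact sequences for $\check{K}^0$ and $\check{K}^1$, which presupposes exactly the sequence~(\ref{5.10}) you are trying to establish. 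You flag this as ``the technical heart,'' which is honest, but it is not a detail---it is the whole difficulty.

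A more self-contained route, closer in spirit to what ``similar to previous sections'' presumably means, is to argue directly from the relations. For well-definedness of $\iota$ (the ``converse'' direction), one must exhibit, for each $\alpha \in K^0(M)$, an explicit chain of relations in $\check{K}^1(M)$ producing $[0,0,\omega]=0$ whenever $[\omega]=\Ch(\alpha)$; this is a spectral-flow construction (e.g.\ a one-parameter family of invertible self-adjoint operators on a finite-rank bundle whose endpoints agree but whose associated eta-form difference realizes $\Ch(\alpha)$). For injectivity proper, the odd analog of Corollary~\ref{cor2} in the appendix shows that the only form classes created by chains of relations (2) and (3) with trivial underlying $K^1$-class lie in $\Image(\Ch\colon K^0(M)\to H^{even})$, which is exactly $\Omega^{even}(M)_K$ modulo exact forms. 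This avoids suspension entirely and does not risk circularity with the next claim, whose 5-lemma proof genuinely needs~(\ref{5.10}) as input.
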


\begin{claim}
The differential $K$-group $\check{K}^1(M)$ is isomorphic to the
standard differential $K$-group $\check{K}^1_{stan}(M)$ as defined in
\cite[Section 9]{Freed-Lott (2010)} and \cite{Tradler-Wilson-Zeinalian (2013)}.
\end{claim}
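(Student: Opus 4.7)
The plan is to identify $\check{K}^1(M)$ with $\Ker(i^* \colon \check{K}^0(S^1 \times M) \to \check{K}^0(M))$, transport the identification through Theorem \ref{thm1} to the $stan$ counterparts, and then recognize the resulting kernel as $\check{K}^1_{stan}(M)$ via the standard suspension isomorphism for odd differential $K$-theory.

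First I would upgrade the second half of Claim \ref{claim3} to the differential setting: show that $p_* \circ p^!$ is the identity on $\check{K}^1(M)$ and that $p^! \circ p_*$ is the projection from $\check{K}^0(S^1 \times M)$ onto $\Ker(i^*)$. On underlying $K^1$-classes this is Claim \ref{claim3}; the additional content is agreement of the differential-form components modulo exact forms. Using the product Riemannian structure on $q \colon S^1 \times S^1 \to S^1$ and the constant-curvature line bundle $V$ from (\ref{5.7}), the relevant eta forms are those of the Bismut superconnection twisted by $V$; these are explicitly computable by a one-dimensional Mehler-type calculation, and combining them with the formulas in (\ref{5.7}) for $p^!$ and in Definition \ref{def6} for the odd pushforward $p_*$ yields the identity on cocycles after invoking relations (2) and (3) of Definition \ref{def8}.

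Next I would chain the isomorphisms. Step~1 gives $\check{K}^1(M) \cong \Ker(i^*)$ in our model. Theorem \ref{thm1} replaces $\check{K}^0(S^1 \times M)$ and $\check{K}^0(M)$ by $\check{K}^0_{stan}(S^1 \times M)$ and $\check{K}^0_{stan}(M)$, turning this into $\check{K}^1(M) \cong \Ker(i^* \colon \check{K}^0_{stan}(S^1 \times M) \to \check{K}^0_{stan}(M))$. Finally, the suspension isomorphism for the standard odd differential $K$-theory from \cite{Freed-Lott (2010), Tradler-Wilson-Zeinalian (2013)} identifies the latter kernel with $\check{K}^1_{stan}(M)$. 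As a cross-check, both our $\check{K}^1(M)$ and $\check{K}^1_{stan}(M)$ sit in short exact sequences of the form
\begin{equation}
0 \rightarrow \frac{\Omega^{even}(M)}{\Omega^{even}(M)_K} \rightarrow \check{K}^1(\cdot) \rightarrow K^1(M) \rightarrow 0
\end{equation}
with identified outer terms, so the claim can alternatively be deduced by constructing any natural map compatible with these sequences and applying the five lemma.

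The main obstacle is the eta-form computation underlying Step~1: upgrading $p_* \circ p^! = \id$ from topological $K$-theory to differential $K$-theory requires that the eta form of the $S^1 \times S^1$ Bismut superconnection twisted by $V$, combined with the eta forms produced by $p_*$ applied to $p^![{\mathcal H}, \A, \omega]$, exactly recover the original form $\omega$ modulo $\Image(d)$. This reduces to an explicit calculation on the circle along the lines of \cite[Sections 9.3 and 10.3]{Berline-Getzler-Vergne (1996)}, but careful bookkeeping of eta forms, Todd forms of the trivially framed circle fibers, and the twist by $V$ is the delicate part of the argument.
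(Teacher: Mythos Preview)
Your proposal is correct, and in fact your ``cross-check'' at the end is precisely the paper's main argument. The paper does not attempt to upgrade Claim~\ref{claim3} to the differential level at all; instead it simply writes down the map
\[
\check{K}^1(M) \xrightarrow{p^!} \check{K}^0(S^1 \times M) \cong \check{K}^0_{stan}(S^1 \times M) \xrightarrow{D} \check{K}^1_{stan}(M),
\]
where $D$ is the desuspension map from \cite[(9.21)]{Freed-Lott (2010)}, checks that it fits into a commutative ladder between the two short exact sequences (\ref{5.10}) and (\ref{5.11}), and invokes the five lemma. In particular the paper never uses $p_*$ at the differential level and never needs the identity $p_* \circ p^! = \id$ on $\check{K}^1(M)$.

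The trade-off is clear: your primary route buys an explicit inverse (via $p_*$) and a self-contained identification of $\check{K}^1(M)$ with $\Ker(i^*)$ inside the Hilbert-bundle model, at the cost of the eta-form bookkeeping you flag as the main obstacle. The paper's route sidesteps that computation entirely---one only has to verify that the composite map (\ref{5.12}) is the identity on $\Omega^{even}(M)/\Omega^{even}(M)_K$ and induces the known isomorphism $K^1(M)\cong K^1_{stan}(M)$ on the quotient, both of which are substantially lighter checks. So your approach is sound but works harder than necessary; promoting your final paragraph to the main argument would align you with the paper and save the delicate Mehler-type calculation.
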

\begin{proof}
There is a short exact sequence
\begin{equation} \label{5.11}
0 \rightarrow \frac{\Omega^{even}(M)}{\Omega^{even}(M)_K} \rightarrow
\check{K}^1_{stan}(M) \rightarrow K^1_{stan}(M) \rightarrow 0.
\end{equation}
Also, using the desuspension map
$D : \check{K}^0_{stan}(S^1 \times M) \rightarrow
\check{K}^1_{stan}(M)$ from
\cite[(9.21)]{Freed-Lott (2010)},
there is a map
\begin{equation} \label{5.12}
\check{K}^1(M) \stackrel{p^!}{\rightarrow} \check{K}^0(S^1 \times M)
\cong \check{K}^0_{stan}(S^1 \times M) \stackrel{D}{\rightarrow}
\check{K}^1_{stan}(M).
\end{equation}
The claim follows from applying the $5$-lemma to the diagram
\begin{equation} \label{5.13}
\begin{array}{ccccccccc}
0 & \rightarrow & \frac{\Omega^{even}(M)}{\Omega^{even}(M)_K} & \rightarrow
& \check{K}^1(M) & \rightarrow & K^1(M) & \rightarrow & 0 \\
& & \downarrow && \downarrow && \downarrow &&  \\
0 & \rightarrow & \frac{\Omega^{even}(M)}{\Omega^{even}(M)_K} & \rightarrow
& \check{K}^1_{stan}(M) & \rightarrow & K^1_{stan}(M) & \rightarrow & 0,
\end{array}
\end{equation}
where the rows are exact and the outer vertical arrows are
isomorphisms.
\end{proof}

\begin{remark} One can define multiplications
$\check{K}^0(M) \times \check{K}^1(M) \rightarrow \check{K}^1(M)$
and
$\check{K}^1(M) \times \check{K}^1(M) \rightarrow \check{K}^0(M)$
in analogy to the multiplication map of Section \ref{mult}.
\end{remark}

\begin{remark}
Definition 9.15 of \cite{Freed-Lott (2010)} is missing a homotopy
relation; we thank Scott Wilson for pointing this out.
Relation (2) on \cite[p. 955]{Freed-Lott (2010)} should
be replaced by a statement that two homotopy equivalent
unitary automorphisms are equivalent.  Then the corresponding relation on
\cite[p. 957]{Freed-Lott (2010)} will involve the transgressing
form $CS$ of \cite[p. 956]{Freed-Lott (2010)} when applied to
homotopy equivalent automorphisms;
c.f. \cite[Appendix A]{Tradler-Wilson-Zeinalian (2013)}.
\end{remark}

\section{Twisted differential $K$-theory} \label{twisted}

In this section we give the basic definitions for a Hilbert bundle
model of twisted differential $K$-theory, i.e. when differential
$K$-theory is twisted by an element of $\HH^3(M; \Z)$.
Subsection \ref{t1} has a review of abelian gerbes.  Subsection
\ref{t2} discusses connective structures on gerbes. Subsection
\ref{t3} recalls twisted de Rham cohomology.  In Subsection
\ref{t4} we define superconnections on projective Hilbert bundles.
Finally, Subsection \ref{psubsect3.1} contains the definition of
twisted differential $K$-theory.

\subsection{Gerbes} \label{t1}

We first give a summary of facts about abelian gerbes,
referring the reader \cite[Chapters 4 and 5]{Brylinski},
\cite{Hitchin (2001)} and \cite{Murray-Stevenson (2000)}
for more details.
We will describe abelian gerbes in terms of their descent data.

Let $M$ be a smooth manifold.
Let $\{U_\alpha\}_{\alpha \in \Lambda}$ be an open cover of $M$.
Put $U_{\alpha \beta} = U_\alpha \cap U_{\beta}$,
$U_{\alpha \beta \gamma} = U_\alpha \cap U_{\beta} \cap U_{\gamma}$, etc.
Any statement about $U_{\alpha \beta}$ will only refer to the case when
$U_{\alpha \beta} \neq \emptyset$, and similarly for
$U_{\alpha \beta \gamma}$ and $U_{\alpha \beta \gamma \delta}$.

A unitary gerbe $\mathcal{L}$ on $M$ is described by the following data:
\begin{itemize}
\item A Hermitian line bundle ${\mathcal L}_{\alpha \beta}$ over
each $U_{\alpha\beta}$ and
\item An isometric isomorphism
$\mu_{\alpha\beta\gamma}\colon \mathcal{L}_{\alpha\beta}
\Big|_{U_{\alpha \beta \gamma}} \otimes\mathcal{L}_{\beta\gamma}
\Big|_{U_{\alpha \beta \gamma}} \to
\mathcal{L}_{\alpha\gamma} \Big|_{U_{\alpha \beta \gamma}}$
over each $U_{\alpha \beta \gamma}$ such that
\item Over each $U_{\alpha\beta\gamma\delta}$, the following diagram commutes :
\begin{equation}
\begin{CD}
\mathcal{L}_{\alpha\beta}\otimes \mathcal{L}_{\beta\gamma}\otimes\mathcal{L}_{\gamma\delta}
@>\mu_{\alpha\beta\gamma}\otimes \id>>
\mathcal{L}_{\alpha\gamma}\otimes\mathcal{L}_{\gamma\delta}\\
@V \id\otimes\mu_{\beta\gamma\delta}VV  @VV\mu_{\alpha\gamma\delta}V\\
\mathcal{L}_{\alpha\beta}\otimes \mathcal{L}_{\beta\delta} @>\ \ \mu_{\alpha\beta\delta}\ \ >>\mathcal{L}_{\alpha\delta}.
\end{CD}
\end{equation}
\end{itemize}

If the cover $\{U_{\alpha}\}_{\alpha \in \Lambda}$ is good then the bundles
$\mathcal{L}_{\alpha \beta}$ are trivializable.
After choices of trivializations,
the collection $(\mu_{\alpha\beta\gamma})$ can be viewed  as a \v{C}ech $2$-cocycle with coefficents in the sheaf $\underline{\mathbb{T}}$ of smooth functions with values in $\mathbb{T}= \{ z \in \mathbb{C} \mid |z|=1\}$.
This defines a cohomology class  $[\mu]\in \HH^2(M; \underline{\mathbb{T}}) \cong \HH^3(M, \Z)$.

Given the open cover, there is an evident notion of two gerbes being
isomorphic. There is also an evident notion of the tensor product of
two gerbes.  A {\em trivial} unitary gerbe has defining line bundle
${\mathcal L}_{\alpha \beta}$ on $U_{\alpha \beta}$ given by
$\widehat{\mathcal L}_\alpha \otimes
\widehat{\mathcal L}_\beta^{-1}$, where
$\widehat{\mathcal L}_\alpha$ is a Hermitian line bundle on $U_\alpha$.
        {\em Stable isomorphism} of unitary gerbes is the equivalence relation
        generated by isomorphism along with saying that
        if ${\mathcal L}$ is a unitary gerbe, and
        ${\mathcal L}^\prime$ is a trivial unitary gerbe, then
        ${\mathcal L}$ is equivalent to ${\mathcal L} \otimes
        {\mathcal L}^\prime$.
        The class $[\mu]$ determines the stable isomorphism class of the
        gerbe. 

\subsection{Connective structures on gerbes} \label{t2}

\begin{definition}
A connective structure on $\mathcal{L}$ is given by a collection $(\nabla_{\alpha\beta})$ of Hermitian connections on $(\mathcal{L}_{\alpha\beta})$ such that
on $U_{\alpha \beta \gamma}$, we have
\begin{equation}
\mu_{\alpha\beta\gamma}^* \nabla_{\alpha\gamma} = \nabla_{\alpha\beta}\otimes \id + \id \otimes \nabla_{\beta\gamma}.
\end{equation}
\end{definition}

\begin{fact}
A connective structure exists on any gerbe.
\end{fact}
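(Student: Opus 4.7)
The plan is to reduce the existence of a connective structure to a \v{C}ech cohomology vanishing statement for a fine sheaf of $1$-forms, then solve the resulting coboundary equation explicitly with a partition of unity.

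First, I would pick an arbitrary Hermitian connection $\widetilde{\nabla}_{\alpha\beta}$ on each line bundle $\mathcal{L}_{\alpha\beta}$; these exist by the usual local-plus-partition-of-unity construction. On each triple overlap $U_{\alpha\beta\gamma}$, the two Hermitian connections
$$\mu_{\alpha\beta\gamma}^{*} \widetilde{\nabla}_{\alpha\gamma} \qquad \text{and} \qquad \widetilde{\nabla}_{\alpha\beta} \otimes \id + \id \otimes \widetilde{\nabla}_{\beta\gamma}$$
on $\mathcal{L}_{\alpha\beta} \otimes \mathcal{L}_{\beta\gamma}$ differ by multiplication by an imaginary-valued $1$-form. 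I denote this difference by
$$\omega_{\alpha\beta\gamma} = \mu_{\alpha\beta\gamma}^{*} \widetilde{\nabla}_{\alpha\gamma} - \widetilde{\nabla}_{\alpha\beta} \otimes \id - \id \otimes \widetilde{\nabla}_{\beta\gamma} \in \Omega^{1}(U_{\alpha\beta\gamma}; i\mathbb{R}).$$

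The main technical step is to verify that $\{\omega_{\alpha\beta\gamma}\}$ is a \v{C}ech $2$-cocycle for the sheaf $\underline{\Omega}^{1}_{i\mathbb{R}}$ of imaginary $1$-forms, namely
$$\omega_{\beta\gamma\delta} - \omega_{\alpha\gamma\delta} + \omega_{\alpha\beta\delta} - \omega_{\alpha\beta\gamma} = 0$$
on each $U_{\alpha\beta\gamma\delta}$. This is obtained by applying the associativity diagram for the $\mu_{\alpha\beta\gamma}$ to an arbitrary local section of $\mathcal{L}_{\alpha\beta} \otimes \mathcal{L}_{\beta\gamma} \otimes \mathcal{L}_{\gamma\delta}$ and comparing, term by term, the two ways of using the $\mu$'s to identify it with a section of $\mathcal{L}_{\alpha\delta}$; the contributions of $\widetilde{\nabla}$ terms reorganize into precisely the alternating sum above. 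This step is where one must be most careful with signs.

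Next, since $\underline{\Omega}^{1}_{i\mathbb{R}}$ is a module over the sheaf of smooth functions, it is fine, so $\check{\HH}^{p}(M; \underline{\Omega}^{1}_{i\mathbb{R}}) = 0$ for all $p \ge 1$. Concretely, letting $\{\rho_{\delta}\}$ be a partition of unity subordinate to $\{U_{\delta}\}$, one sets
$$\tau_{\alpha\beta} = \sum_{\delta} \rho_{\delta} \cdot \omega_{\alpha\beta\delta} \in \Omega^{1}(U_{\alpha\beta}; i\mathbb{R}),$$
with each $\rho_{\delta} \omega_{\alpha\beta\delta}$ extended by zero outside $U_{\alpha\beta\delta}$, and the $2$-cocycle identity gives the splitting $\omega_{\alpha\beta\gamma} = \tau_{\beta\gamma} - \tau_{\alpha\gamma} + \tau_{\alpha\beta}$.

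Finally, I would define the desired connective structure by $\nabla_{\alpha\beta} = \widetilde{\nabla}_{\alpha\beta} - \tau_{\alpha\beta}$. A direct substitution shows that the new obstruction vanishes, giving
$$\mu_{\alpha\beta\gamma}^{*} \nabla_{\alpha\gamma} = \nabla_{\alpha\beta} \otimes \id + \id \otimes \nabla_{\beta\gamma}$$
on each $U_{\alpha\beta\gamma}$, as required. The main obstacle is the cocycle verification in the associativity step; once that is in hand, the remainder is a standard partition-of-unity argument for fine sheaves.
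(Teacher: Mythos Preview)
The paper does not actually prove this statement: it is recorded as a \emph{Fact} with no argument, as part of the background on gerbes drawn from the cited references (Brylinski, Hitchin). So there is no ``paper's own proof'' to compare against.

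Your argument is the standard one found in those references and is correct in outline: choose arbitrary Hermitian connections, measure the failure of the connective-structure identity as a \v{C}ech $2$-cochain of imaginary $1$-forms, verify it is a cocycle using the associativity square for the $\mu_{\alpha\beta\gamma}$, and then split it via a partition of unity because the sheaf $\underline{\Omega}^{1}_{i\mathbb{R}}$ is fine. One small point to watch: with your conventions you obtain $\omega_{\alpha\beta\gamma} = (\delta\tau)_{\alpha\beta\gamma} = \tau_{\beta\gamma} - \tau_{\alpha\gamma} + \tau_{\alpha\beta}$, but then the corrected connection that kills the obstruction is $\nabla_{\alpha\beta} = \widetilde{\nabla}_{\alpha\beta} + \tau_{\alpha\beta}$ rather than $\widetilde{\nabla}_{\alpha\beta} - \tau_{\alpha\beta}$ (substitute and you will see the sign). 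This is exactly the kind of sign you flagged as needing care, and once fixed the argument goes through.
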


Let $\nabla=(\nabla_{\alpha\beta})$ be a connective structure on
$\mathcal{L}$. Put $\kappa_{\alpha\beta}=\nabla^2_{\alpha\beta}$,
the curvature
of the connection $\nabla_{\alpha \beta}$.

\begin{definition}
A curving of $\nabla$ is given
 by a collection  $K$ of $2$-forms $\kappa_\alpha\in \Omega^2(U_\alpha)$
such that on $U_{\alpha \beta}$, we have
\begin{equation}
\kappa_{\alpha\beta}=\kappa_{\alpha} - \kappa_{\beta}.
\end{equation}
\end{definition}

\begin{fact}
Given a gerbe with connective structure $\nabla$, there is a curving of $\nabla$.
\end{fact}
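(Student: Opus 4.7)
The plan is to observe that the collection of curvature 2-forms $(\kappa_{\alpha\beta})$ is a \v{C}ech $1$-cocycle with values in the sheaf $\underline{\Omega^2}$ of smooth $2$-forms, and then to use the fact that this sheaf is fine (i.e.\ admits partitions of unity) to express the cocycle as a coboundary, which is exactly the data of a curving.

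First I would verify the cocycle condition. The identity $\mu_{\alpha\beta\gamma}^*\nabla_{\alpha\gamma} = \nabla_{\alpha\beta}\otimes \id + \id\otimes \nabla_{\beta\gamma}$ expresses $\mu_{\alpha\beta\gamma}$ as a flat isomorphism between $\mathcal{L}_{\alpha\beta}\otimes\mathcal{L}_{\beta\gamma}$ (with the tensor product connection) and $\mathcal{L}_{\alpha\gamma}$ (with $\nabla_{\alpha\gamma}$). Taking curvatures, and using that the curvature of a tensor product of line bundle connections is the sum of the curvatures, one obtains
\begin{equation}
\kappa_{\alpha\gamma} = \kappa_{\alpha\beta} + \kappa_{\beta\gamma}
\end{equation}
on each $U_{\alpha\beta\gamma}$. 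This is the \v{C}ech cocycle condition for the collection $(\kappa_{\alpha\beta})\in \prod_{\alpha,\beta}\Omega^2(U_{\alpha\beta})$ (note in particular that taking $\alpha=\beta=\gamma$ gives $\kappa_{\alpha\alpha}=0$, and taking $\alpha=\gamma$ gives $\kappa_{\beta\alpha}=-\kappa_{\alpha\beta}$).

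Next I would construct the curving explicitly by a partition of unity argument. Choose a smooth partition of unity $\{\rho_\gamma\}_{\gamma\in\Lambda}$ subordinate to the cover $\{U_\gamma\}$, and define
\begin{equation}
\kappa_\alpha = \sum_{\gamma}\rho_\gamma \, \kappa_{\alpha\gamma} \in \Omega^2(U_\alpha),
\end{equation}
where each summand $\rho_\gamma\kappa_{\alpha\gamma}$, originally defined on $U_{\alpha\gamma}$, is extended by zero to all of $U_\alpha$ using the support of $\rho_\gamma$. On each overlap $U_{\alpha\beta}$, the cocycle identity $\kappa_{\alpha\gamma}-\kappa_{\beta\gamma}=\kappa_{\alpha\beta}$ (independent of $\gamma$) yields
\begin{equation}
\kappa_\alpha - \kappa_\beta = \sum_\gamma \rho_\gamma(\kappa_{\alpha\gamma}-\kappa_{\beta\gamma}) = \Big(\sum_\gamma \rho_\gamma\Big)\,\kappa_{\alpha\beta} = \kappa_{\alpha\beta},
\end{equation}
which is the defining property of a curving.

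There is no real obstacle here: the only delicate point is checking that the cocycle identity among the $\kappa_{\alpha\beta}$ genuinely follows from the connective structure axiom, which is a direct computation using the compatibility of curvature with pullback by connection-preserving isomorphisms and with tensor products of connections. The vanishing of higher \v{C}ech cohomology of the fine sheaf $\underline{\Omega^2}$ then does the rest, and a curving is produced by the standard partition of unity formula.
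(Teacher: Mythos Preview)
Your argument is correct and is the standard one: the curvatures $\kappa_{\alpha\beta}$ form a \v{C}ech $1$-cocycle in the fine sheaf $\underline{\Omega^2}$, so a partition of unity trivializes it and produces the curving. The paper itself does not supply a proof here---the statement is recorded as a \emph{Fact} and left to the reader or to the references (Brylinski, Hitchin). Your write-up fills in exactly the expected details.
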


Given $\tau \in \Omega^2(M)$ and a curving $K=\{\kappa_\alpha\}$ on a gerbe $\mathcal{L}$ with connective structure $\nabla$, we can define
a new curving $K+\tau = \{\kappa_\alpha+\tau\}$. This defines a free transitive action of $\Omega^2(M)$ on the set of all curvings (on a given
connective structure).

There is a closed form
$c(K) \in \Omega^3(M)$
such that $c(K)|_{U_{\alpha}}= d\kappa_{\alpha}$. Note that $c(K+\tau)= c(K) + d \tau$.

If ${\mathcal L}$ is a trivial unitary gerbe with
${\mathcal L}_{\alpha \beta} = \widehat{\mathcal L}_{\alpha} \otimes
\widehat{\mathcal L}_{\beta}^{-1}$, and
$\widehat{\nabla}_\alpha$ is a Hermitian
connection on $\widehat{\mathcal L}_{\alpha}$, then
there is a connective structure $\nabla$ on ${\mathcal L}$
given by $\nabla_{\alpha \beta} =
\widehat{\nabla}_\alpha \otimes \widehat{\nabla}_\beta^{-1}$.
There is a curving $K$ of $\nabla$ coming from $\kappa_\alpha =
\nabla_\alpha^2$. It has $c(K) = 0$. 

\subsection{Twisted cohomology} \label{t3}

Let $H\in \Omega^3(M)$ be a closed $3$-form.   The periodic twisted de Rham complex is   the $\mathbb{Z}_2$-graded complex
$\left(\Omega^*(M), d_H \right)$
\begin{equation}
\ldots \overset{d_H}{\longrightarrow} \Omega^{even}(M) \overset{d_H}{\longrightarrow} \Omega^{odd}(M) \overset{d_H}{\longrightarrow}\Omega^{even}(M) \overset{d_H}{\longrightarrow}\ldots
\end{equation}
with the differential $d_H=d +H\wedge \cdot$.
Its cohomology is called the twisted de Rham cohomology.
If $H'=H+d\tau $
then there is an isomorphism of complexes   $I_\tau \colon \left(\Omega^*(M), d_{H} \right) \to \left(\Omega^*(M), d_{H'} \right)$ where
\begin{equation}\label{defI}
I_\tau (\xi) = e^{-\tau}\wedge \xi.
\end{equation}

\subsection{Projective Hilbert bundles} \label{t4}

\begin{definition}
An $\mathcal{L}$ -projective Hilbert bundle $\mathcal{H}$ is given by
\begin{itemize}
\item A Hilbert bundle
(in the sense of Subsection \ref{subsect2.1})
$\mathcal{H}_\alpha$ over each $U_\alpha$,
and
\item A collection of Hilbert bundle isomorphisms $\varphi_{\alpha\beta}: \mathcal{H}_\alpha\otimes \mathcal{L}_{\alpha\beta} \cong \mathcal{H}_\beta$ such that
\item On $U_{\alpha \beta \gamma}$, the following diagram commutes :

\begin{equation}
\begin{CD}
\mathcal{H}_\alpha\otimes \mathcal{L}_{\alpha\beta}\otimes \mathcal{L}_{\beta\gamma} @>\id \otimes \mu_{\alpha\beta\gamma}>>
\mathcal{H}_\alpha \otimes \mathcal{L}_{\alpha\gamma}\\
@V\varphi_{\alpha\beta}\otimes \id VV    @VV\varphi_{\alpha\gamma}V \\
\mathcal{H}_\beta\otimes \mathcal{L}_{\beta\gamma} @>\ \ \varphi_{\beta\gamma}\ \ >> \mathcal{H}_{\gamma}.
\end{CD}
\end{equation}
\end{itemize}
\end{definition}

\begin{example}
  To a unitary gerbe ${\mathcal L}$ on $M$, one can associate a
  projective space bundle on $M$ \cite[Section 4.1]{Brylinski}.
Using this bundle, there is a natural ${\mathcal L}$-projective Hilbert
bundle on $M$ formed by fermionic Fock spaces; c.f.
\cite{Mickelsson (2006)}.
\end{example}

Note that there is a canonical vector bundle isomorphism over $U_{\alpha \beta}$ :
\begin{equation}
 op^k(\mathcal{H}_\alpha \otimes \mathcal{L}_{\alpha\beta}) \cong  op^k(\mathcal{H}_\alpha).
\end{equation}
Then the bundle isomorphism $\varphi_{\alpha\beta}$ induces
an isomorphism of algebra bundles over $U_{\alpha\beta}$ :
\begin{equation}
\varphi_{\alpha\beta}^* \colon op^k(\mathcal{H}_\beta) \to  op^k(\mathcal{H}_\alpha).
\end{equation}
Over $U_{\alpha \beta \gamma}$, we have
\begin{equation}
\varphi_{\alpha\beta}^* \circ \varphi_{\beta\gamma}^* = \varphi_{\alpha\gamma}^*.
\end{equation}
Hence the collection $\{op^k(\mathcal{H}_\alpha)\}_{\alpha \in \Lambda}$
defines a bundle of algebras over $M$, which we denote by
$op^k(\mathcal{H})$. Similarly, there is
a bundle $\mathcal{L}^1(\mathcal{H})$ of trace ideals.
For every $\alpha \in \Lambda$, we can apply the fiberwise trace to
smooth sections
of $\mathcal{L}^1(\mathcal{H}_{\alpha})$, to obtain a map
\begin{equation}
\Tr_{\alpha} \colon
C^\infty(U_\alpha; \mathcal{L}^1(\mathcal{H}_{\alpha})) \to
C^{\infty}(U_{\alpha}).
\end{equation}
For $b \in C^\infty(U_{\beta}; \mathcal{L}^1(\mathcal{H}_{\beta}))$,
we have $\Tr_{\alpha}(\varphi_{\alpha \beta}^*(b))= \Tr_{\beta}(b)$ on
$U_{\alpha \beta}$.
Hence we can define $\Tr \colon C^\infty(M; \mathcal{L}^1(\mathcal{H}))
\to C^{\infty}(M)$ by saying that
\begin{equation}
\Tr(a) \Big|_{U_{\alpha}}= \Tr_{\alpha} \left( a \Big|_{U_{\alpha}} \right).
\end{equation}
If the bundle $\mathcal{H}$ is $\Z_2$-graded then there is a
supertrace
\begin{equation}
\Str \colon C^\infty(M; \mathcal{L}^1(\mathcal{H})) \to C^{\infty}(M).
\end{equation}

Assume now that the gerbe $\mathcal{L}$ is equipped with a connective
structure $\nabla$.
A superconnection on $ \mathcal{H} $  is a choice of superconnection
$\mathbb{A}_\alpha$ on each
$\mathcal{H}_\alpha$ so that on $U_{\alpha \beta}$, we have
\begin{equation}\label{connectionon}
\varphi_{\alpha\beta}^* \mathbb{A}_\beta = \mathbb{A}_\alpha\otimes \id + \id \otimes \nabla_{\alpha\beta}.
\end{equation}

Let $K$ be a curving of $\nabla$. Put $H= c(K)$.
\begin{lemma} There is a
$\theta_{\mathbb{A}, K} \in \Omega^*(M, op^*(\mathcal{H}))$ such that
for all $\alpha \in \Lambda$, we have
\begin{equation}
\left(\theta_{\mathbb{A}, K}\right)
\Big|_{U_\alpha} = \mathbb{A}_\alpha^2+\kappa_\alpha.
\end{equation}
\end{lemma}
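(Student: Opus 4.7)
The plan is to define $\theta_{\mathbb{A}, K}$ by gluing. On each $U_\alpha$, the expression $\mathbb{A}_\alpha^2 + \kappa_\alpha$ lies in $\Omega^*(U_\alpha; op^*(\mathcal{H}_\alpha))$ (the first term by the same Duhamel-type analysis as in Lemma \ref{lem7}, and the second term is a scalar two-form acting by exterior multiplication). The content of the lemma is therefore to verify the cocycle compatibility
\begin{equation}
\varphi_{\alpha\beta}^*(\mathbb{A}_\beta^2 + \kappa_\beta) = \mathbb{A}_\alpha^2 + \kappa_\alpha
\end{equation}
on each overlap $U_{\alpha\beta}$; the local sections will then glue uniquely into a global $\theta_{\mathbb{A}, K}$.

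First I would square the defining identity \eqref{connectionon}. Working in a local unitary trivialization of $\mathcal{L}_{\alpha\beta}$, write $\nabla_{\alpha\beta} = d + a_{\alpha\beta}$ so that $\kappa_{\alpha\beta} = d a_{\alpha\beta}$. In this trivialization \eqref{connectionon} reads $\varphi_{\alpha\beta}^*\mathbb{A}_\beta = \mathbb{A}_\alpha + a_{\alpha\beta}$, viewed as superconnections on $\mathcal{H}_\alpha \otimes \mathcal{L}_{\alpha\beta} \cong \mathcal{H}_\alpha$. The key algebraic step is then
\begin{equation}
(\mathbb{A}_\alpha + a_{\alpha\beta})^2 = \mathbb{A}_\alpha^2 + \{\mathbb{A}_\alpha, a_{\alpha\beta}\} + a_{\alpha\beta}^2 = \mathbb{A}_\alpha^2 + \kappa_{\alpha\beta},
\end{equation}
where $a_{\alpha\beta}^2 = 0$ since $a_{\alpha\beta}$ is a one-form. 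The graded anticommutator $\{\mathbb{A}_\alpha, a_{\alpha\beta}\}$ collapses to $da_{\alpha\beta}$: each component $\mathbb{A}_{\alpha,[i]}$ with $i \neq 1$ has odd total parity, so it anticommutes in the graded sense with the scalar one-form $a_{\alpha\beta}$ and contributes nothing, while the connection piece $\mathbb{A}_{\alpha,[1]}$ yields exactly $da_{\alpha\beta} = \kappa_{\alpha\beta}$ by the Leibniz rule. Combining this with the curving condition $\kappa_{\alpha\beta} = \kappa_\alpha - \kappa_\beta$ gives
\begin{equation}
\varphi_{\alpha\beta}^*(\mathbb{A}_\beta^2 + \kappa_\beta) = \mathbb{A}_\alpha^2 + \kappa_{\alpha\beta} + \kappa_\beta = \mathbb{A}_\alpha^2 + \kappa_\alpha,
\end{equation}
which is the compatibility we need.

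The main obstacle is keeping the Koszul signs straight in the graded-anticommutator computation, and in particular confirming that both the degree-zero piece $\mathbb{A}_{\alpha,[0]}$ and the higher-degree pieces $\mathbb{A}_{\alpha,[i]}$ with $i \ge 2$ all drop out of $\{\mathbb{A}_\alpha, a_{\alpha\beta}\}$; this is a routine verification but is the only place a slip could occur. No analytic issues arise since all operations take place inside $\Omega^*(U_\alpha; op^*(\mathcal{H}_\alpha))$. Independence of the chosen local trivialization of $\mathcal{L}_{\alpha\beta}$ is automatic, because the right-hand side $\mathbb{A}_\beta^2 + \kappa_\beta$ is manifestly intrinsic; hence $\theta_{\mathbb{A}, K}$ is well defined as the unique global section of $\Omega^*(M; op^*(\mathcal{H}))$ whose restriction to each $U_\alpha$ equals $\mathbb{A}_\alpha^2 + \kappa_\alpha$.
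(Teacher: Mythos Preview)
Your proof is correct and follows the same route as the paper: both square the compatibility relation \eqref{connectionon} to obtain $\varphi_{\alpha\beta}^*\mathbb{A}_\beta^2 = \mathbb{A}_\alpha^2 + \kappa_{\alpha\beta}$, then invoke the curving condition $\kappa_{\alpha\beta} = \kappa_\alpha - \kappa_\beta$ to glue. The paper simply asserts the squared identity, whereas you unpack it via a local trivialization of $\mathcal{L}_{\alpha\beta}$ and an explicit Koszul-sign check on $\{\mathbb{A}_\alpha, a_{\alpha\beta}\}$; this is helpful detail but not a different argument.
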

\begin{proof}
Equation \eqref{connectionon} implies that on $U_{\alpha \beta}$, we have
\begin{equation}
\varphi_{\alpha\beta}^* \mathbb{A}_\beta^2 = \mathbb{A}_\alpha^2 +  \kappa_{\alpha\beta}.
\end{equation}
Hence the collection
$\{\mathbb{A}_\alpha^2 + \kappa_\alpha\}_{\alpha \in \Lambda}$  satisfies the relations
\begin{equation}
\varphi_{\alpha\beta}^*(\mathbb{A}_\beta^2 + \kappa_\beta) = \mathbb{A}_\alpha^2 + \kappa_\alpha.
\end{equation}
The lemma follows.
\end{proof}

For $\xi \in \Omega^*(M, op^*(\mathcal{H}))$ we can define $[\mathbb{A}, \xi] \in \Omega^*(M, op^*(\mathcal{H}))$ by
\begin{equation}
[\mathbb{A}, \xi] \Big|_{U_\alpha} = \left[
\mathbb{A}_\alpha, \xi \Big|_{U_\alpha} \right].
\end{equation}
One can check that this is well defined.

\begin{lemma}
We have
$ [\mathbb{A}, \theta_{\mathbb{A}, K}] = H$.
\end{lemma}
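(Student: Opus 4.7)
The plan is to compute $[\mathbb{A}, \theta_{\mathbb{A},K}]$ locally on each $U_\alpha$ using the formula $\theta_{\mathbb{A},K}|_{U_\alpha} = \mathbb{A}_\alpha^2 + \kappa_\alpha$ from the preceding lemma, and then patch. Locally the supercommutator splits as
\begin{equation}
[\mathbb{A}_\alpha, \mathbb{A}_\alpha^2 + \kappa_\alpha] \;=\; [\mathbb{A}_\alpha, \mathbb{A}_\alpha^2] + [\mathbb{A}_\alpha, \kappa_\alpha],
\end{equation}
and I will handle each summand separately.

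The first summand vanishes by a Bianchi-type identity $[\mathbb{A}_\alpha, \mathbb{A}_\alpha^2] = 0$. This is immediate from the graded Jacobi identity applied to $\mathbb{A}_\alpha^2 = \tfrac{1}{2}[\mathbb{A}_\alpha, \mathbb{A}_\alpha]$, or equivalently from the fact that $\mathbb{A}_\alpha$ has total odd parity while $\mathbb{A}_\alpha^2$ is even, so the supercommutator reduces to $\mathbb{A}_\alpha \mathbb{A}_\alpha^2 - \mathbb{A}_\alpha^2 \mathbb{A}_\alpha = 0$. For the second summand I will decompose $\mathbb{A}_\alpha = \sum_{i \geq 0} \mathbb{A}_{\alpha,[i]}$ and view $\kappa_\alpha$ as the operator of wedge product with a scalar $2$-form on $\Omega^*(U_\alpha, \mathcal{H}_\alpha)$. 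A direct sign check shows that each operator-valued form $\mathbb{A}_{\alpha,[i]}$ with $i \neq 1$ graded-commutes with a scalar form of even exterior degree, so $[\mathbb{A}_{\alpha,[i]}, \kappa_\alpha] = 0$ for $i \neq 1$. The only surviving contribution is $[\mathbb{A}_{\alpha,[1]}, \kappa_\alpha]$, and since the connection $\mathbb{A}_{\alpha,[1]}$ acts as a derivation whose scalar principal part is $d$, and since $\kappa_\alpha$ is scalar (so its wedge-product commutes with the connection $1$-form part), the graded Leibniz rule yields $[\mathbb{A}_{\alpha,[1]}, \kappa_\alpha \wedge (\cdot)] = (d\kappa_\alpha) \wedge (\cdot)$.

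Combining the two contributions gives $[\mathbb{A}, \theta_{\mathbb{A},K}]|_{U_\alpha} = d\kappa_\alpha$, which is by definition the restriction $c(K)|_{U_\alpha} = H|_{U_\alpha}$. Since the element $[\mathbb{A}, \theta_{\mathbb{A},K}] \in \Omega^*(M, op^*(\mathcal{H}))$ was already shown to be well defined by the previous paragraph, and since its local expressions are scalar $3$-forms that patch to the globally defined $3$-form $H$, the global identity $[\mathbb{A}, \theta_{\mathbb{A},K}] = H$ follows. The only step that needs genuine care is the sign bookkeeping in the supercommutator when decomposing $\mathbb{A}_\alpha$ by exterior degree; everything else is bookkeeping, and I do not anticipate any serious obstacle.
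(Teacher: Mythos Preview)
Your proof is correct and follows essentially the same approach as the paper's: the paper's proof is the one-line computation $[\mathbb{A}_\alpha, \mathbb{A}_\alpha^2 + \kappa_\alpha] = d\kappa_\alpha = H|_{U_\alpha}$, which packages together exactly the Bianchi identity $[\mathbb{A}_\alpha, \mathbb{A}_\alpha^2]=0$ and the derivation property $[\mathbb{A}_\alpha, \kappa_\alpha]=d\kappa_\alpha$ that you spell out in detail. Your more explicit treatment of the sign bookkeeping and the degree-by-degree decomposition is a sound elaboration of what the paper takes for granted.
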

\begin{proof}
On $U_\alpha$, we know that
$[\mathbb{A}_\alpha, \mathbb{A}_\alpha^2+\kappa_\alpha]= d\kappa_\alpha =
H \Big|_{U_\alpha}$. The lemma follows.
\end{proof}

\begin{definition}
The Chern character of $\A$ is given by
\begin{equation}
\Ch(\A, K) = \Str e^{- \: \theta_{\mathbb{A}, K}} \in \Omega^{even}(M).
\end{equation}
\end{definition}

\begin{lemma}
\begin{enumerate}
\item We have $d_H \Ch(\A, K)=0$.
\item Let $K'=K+\tau$ be another curving. Put $H'=c(K')=H+d \tau$. Then
\begin{equation}
I_\tau (\Ch(\A, K)) = \Ch(\A, K').
\end{equation}
\end{enumerate}
\end{lemma}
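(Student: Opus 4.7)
The plan is to reduce both statements to local computations on the cover $\{U_\alpha\}$, exploiting the crucial fact that $\kappa_\alpha$ is a scalar $2$-form (not operator-valued), so it is central in $\Omega^*(U_\alpha; op^*(\mathcal{H}_\alpha))$. This means $\kappa_\alpha$ commutes with $\mathbb{A}_\alpha^2$, and hence on $U_\alpha$ the operator exponential factors as
\begin{equation}
e^{- \theta_{\mathbb{A}, K}}\Big|_{U_\alpha} = e^{- \kappa_\alpha}\cdot e^{- \mathbb{A}_\alpha^2}.
\end{equation}
Taking the supertrace (and noting $e^{-\kappa_\alpha}$ is scalar, so it pulls out) gives the key local identity
\begin{equation}
\Ch(\mathbb{A}, K)\Big|_{U_\alpha} = e^{-\kappa_\alpha}\,\Ch(\mathbb{A}_\alpha).
\end{equation}

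For part (1), I would differentiate this local identity. By Lemma \ref{lem7}, $\Ch(\mathbb{A}_\alpha)$ is closed on $U_\alpha$, so the Leibniz rule yields
\begin{equation}
d\,\Ch(\mathbb{A}, K)\Big|_{U_\alpha} = -\,d\kappa_\alpha \wedge e^{-\kappa_\alpha}\,\Ch(\mathbb{A}_\alpha) = -\,H\Big|_{U_\alpha} \wedge \Ch(\mathbb{A}, K)\Big|_{U_\alpha},
\end{equation}
using $c(K)|_{U_\alpha} = d\kappa_\alpha$. Since this holds on every chart and the expressions glue to global forms, we conclude $d\,\Ch(\mathbb{A}, K) + H \wedge \Ch(\mathbb{A}, K) = 0$, i.e., $d_H\,\Ch(\mathbb{A}, K) = 0$.

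For part (2), the new curving has local components $\kappa_\alpha' = \kappa_\alpha + \tau|_{U_\alpha}$, and since $\tau$ is again a scalar form, $e^{-(\kappa_\alpha + \tau)} = e^{-\tau} \cdot e^{-\kappa_\alpha}$. Applying the local identity with $K'$ in place of $K$ gives
\begin{equation}
\Ch(\mathbb{A}, K')\Big|_{U_\alpha} = e^{-\tau}\cdot e^{-\kappa_\alpha}\,\Ch(\mathbb{A}_\alpha) = e^{-\tau}\wedge \Ch(\mathbb{A}, K)\Big|_{U_\alpha},
\end{equation}
and the right-hand side is precisely $I_\tau\bigl(\Ch(\mathbb{A}, K)\bigr)|_{U_\alpha}$ by the definition \eqref{defI}. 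Since this holds on every chart, the global identity follows.

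Neither part presents a real obstacle; the only subtlety is verifying that the local expressions $e^{-\kappa_\alpha}\Ch(\mathbb{A}_\alpha)$ actually glue to the global form $\Str\, e^{-\theta_{\mathbb{A}, K}}$, which is immediate from the definition of $\theta_{\mathbb{A},K}$ combined with the fact that $\varphi_{\alpha\beta}^*$ preserves the supertrace on $op^*(\mathcal{H})$. One could alternatively give a global proof by adapting \eqref{closed} of Lemma \ref{lem7} to show $d\,\Str\,e^{-\theta_{\mathbb{A},K}} = -\Str\bigl([\mathbb{A},\theta_{\mathbb{A},K}]\,e^{-\theta_{\mathbb{A},K}}\bigr) = -H \wedge \Ch(\mathbb{A},K)$, but the local factorization above makes both statements essentially a one-line computation.
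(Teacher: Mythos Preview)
Your proof is correct but takes a different route for part (1) than the paper. The paper argues globally, invoking the preceding lemma $[\mathbb{A}, \theta_{\mathbb{A},K}] = H$ together with the trace identity $d\,\Str(e^{-\theta}) = \Str[\mathbb{A}, e^{-\theta}] = -\Str\bigl([\mathbb{A},\theta]\,e^{-\theta}\bigr)$ --- exactly the alternative you sketch in your closing paragraph. Your local factorization $e^{-\theta_{\mathbb{A},K}}\big|_{U_\alpha} = e^{-\kappa_\alpha}\,e^{-\mathbb{A}_\alpha^2}$ instead reduces the twisted statement directly to the untwisted closedness of $\Ch(\mathbb{A}_\alpha)$ from Lemma~\ref{lem7}. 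Both arguments are short; the paper's is slightly more structural (it never splits the exponential and uses the Bianchi-type identity as a black box), while yours is more explicit and bypasses the commutator lemma entirely. For part (2) the two proofs essentially coincide: the paper simply notes $\theta_{\mathbb{A},K'} = \theta_{\mathbb{A},K} + \tau$ and uses centrality of $\tau$ to pull $e^{-\tau}$ out of the supertrace, which is the same computation you carry out chart by chart.
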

 \begin{proof}
(1). We have
\begin{align}
d \Str e^{- \: \theta_{\mathbb{A}, K}}= &
\Str [\mathbb{A}, e^{- \: \theta_{\mathbb{A}, K}}]=
- \Str [\mathbb{A}, \theta_{\mathbb{A}, K}] e^{- \: \theta_{\mathbb{A}, K}}
\\
= &- \Str H e^{- \: \theta_{\mathbb{A}, K}}= -H \Str e^{- \: \theta_{\mathbb{A}, K}}. \notag
\end{align}
Hence $d_H \Ch(\A, K)=0$.   \\
(2). As $\theta_{\mathbb{A}, K'} = \theta_{\mathbb{A}, K}+ \tau$, the lemma
follows.
 \end{proof}

The proofs of the next four lemmas are similar to those in
Subsection \ref{subsect2.2}.

\begin{lemma} 
Let $\{\A(t)\}_{t \in [0,1]}$ and $\{\widehat{\A}(t)\}_{t \in [0,1]}$
be two smooth $1$-parameter families of superconnections on ${\mathcal H}$
with $\A(0) = \widehat{\A}(0)$ and $\A(1) = \widehat{\A}(1)$.
Suppose that the two $1$-parameter families are homotopic
relative to the endpoints, in sense that there is
a smooth $2$-parameter family of superconnections
$\{\widetilde{\A}(s,t)\}_{s,t \in [0,1]}$ on ${\mathcal H}$
with $\widetilde{\A}(0,t) = \A(t)$, $\widetilde{\A}(1,t) =
\widehat{\A}(t)$, $\widetilde{\A}(s,0) = \A(0) = \widehat{\A}(0)$ and
$\widetilde{\A}(s,1) = \A(1) = \widehat{\A}(1)$.

Then
\begin{equation} \label{p2.5}
\int_0^1 \Str \left( \frac{d \A(t)}{dt}
e^{- \: \theta_{\mathbb{A}(t), K}} \right) dt =
\int_0^1 \Str \left( \frac{d \widehat{\A}(t)}{dt}
e^{- \: \theta_{\widehat{\A}(t), K}} \right) dt
\end{equation}
in $\Omega^{odd}(M)/\Image(d_H)$.
\end{lemma}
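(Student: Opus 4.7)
The plan is to imitate the argument of Lemma \ref{lem8}, but with the twisted Chern character $\Ch(\cdot,K)$ in place of $\Ch(\cdot)$ and with Stokes modulo $\Image(d_H)$ in place of Stokes modulo $\Image(d)$. First I would introduce the superconnection
\begin{equation}
\B = ds \wedge \partial_s + dt \wedge \partial_t + \widetilde{\A}(s,t)
\end{equation}
on the pullback of $\mathcal{H}$ to $[0,1] \times [0,1] \times M$, equipped with the pullback gerbe, connective structure and curving. Because the curving $K$ only has a component along $M$, the analog of Lemma \ref{lem7} (already established for Chern characters in the twisted setting by the preceding lemma) gives
\begin{equation}
d_H \Ch(\B,K) = 0
\end{equation}
on $[0,1]^2 \times M$, where $H = c(K)$.

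Next I would expand $\Ch(\B,K)$ in the Duhamel manner around $\Ch(\widetilde{\A}(s,t),K)$ to obtain
\begin{align}
\Ch(\B,K) = & \Ch(\widetilde{\A}(s,t),K) - ds \wedge \Str\!\left( \partial_s \widetilde{\A}(s,t) \: e^{- \theta_{\widetilde{\A}(s,t),K}} \right) \notag \\
& - dt \wedge \Str\!\left( \partial_t \widetilde{\A}(s,t) \: e^{- \theta_{\widetilde{\A}(s,t),K}} \right) + O(ds \wedge dt).
\end{align}
Decomposing the total differential on $[0,1]^2 \times M$ as $d_{[0,1]^2} + d_M + H \wedge \cdot$, the twisted closedness above yields $d_{[0,1]^2} \Ch(\B,K) = - d_H \Ch(\B,K)$ in $\Omega^*([0,1]^2 \times M)$. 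Integrating over $[0,1]^2$ and commuting $d_H$ with the fiber integral gives
\begin{equation}
\int_{\partial([0,1]\times[0,1])} \Ch(\B,K) = - d_H \int_{[0,1]\times[0,1]} \Ch(\B,K),
\end{equation}
which is zero modulo $\Image(d_H)$.

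Finally I would read off the four boundary integrals: the edges $\{t=0\}$ and $\{t=1\}$ contribute nothing, since $\widetilde{\A}(s,0)$ and $\widetilde{\A}(s,1)$ are independent of $s$ and hence $\partial_s \widetilde{\A}$ vanishes there; the edges $\{s=0\}$ and $\{s=1\}$ contribute precisely $\int_0^1 \Str\!\left( \frac{d\A(t)}{dt} e^{- \theta_{\A(t),K}}\right) dt$ and $- \int_0^1 \Str\!\left( \frac{d\widehat{\A}(t)}{dt} e^{- \theta_{\widehat{\A}(t),K}}\right) dt$, respectively. Subtracting gives the claimed equality in $\Omega^{odd}(M)/\Image(d_H)$.

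The main technical point, and the only real obstacle, is to check that the local-to-global passage from the curving-dependent expressions $\A_\alpha^2 + \kappa_\alpha$ to the globally defined element $\theta_{\A,K}$ is compatible with Duhamel expansion on the two-parameter family; but this follows because the differences $\A_\alpha^2 - \A_\beta^2 = -\kappa_{\alpha\beta}$ are scalar-valued and commute with everything, so the Duhamel expansion of $e^{-\theta_{\widetilde{\A}(s,t),K}}$ transforms correctly under the transition maps $\varphi_{\alpha\beta}$ and the traces involved are well defined by the discussion after the definition of $\Str$ on $\mathcal{L}^1(\mathcal{H})$. The rest is a routine repetition of the proof of Lemma \ref{lem8}.
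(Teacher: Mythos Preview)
Your proposal is correct and follows precisely the approach the paper intends: the paper does not spell out a separate proof for this twisted lemma but simply states that ``the proofs of the next four lemmas are similar to those in Subsection~\ref{subsect2.2},'' i.e.\ to the proof of Lemma~\ref{lem8}. Your adaptation---pulling back the gerbe data to $[0,1]^2\times M$, using $d_H$-closedness of $\Ch(\B,K)$, and reading off the boundary integral modulo $\Image(d_H)$---is exactly the intended argument, and your remark on the scalar nature of the curving terms correctly handles the only point where the twisted case differs from the untwisted one.
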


Let $\A_0$ and $\A_1$ be two superconnections on ${\mathcal H}$ such that
$\A_{0,[0]}- \A_{1,[0]} \in \Omega^0(M; op^0({\mathcal H}))$.
For $t \in [0,1]$, put $\A(t) = (1-t)\A_0 + t\A_1$.
Define $\eta(\A_0, \A_1) \in \Omega^{odd}(M)/ \Image(d_H)$ by
\begin{equation} \label{p2.9}
\eta(\A_0, \A_1) = \int_0^1 \Str \left( \frac{d \A(t)}{dt}
e^{- \: \theta_{\mathbb{A}(t), K}} \right) dt.
\end{equation}

\begin{lemma} 
\begin{equation} \label{p2.10}
\Ch(\A_1)- \Ch(\A_0) = - d_H \eta(\A_0, \A_1).
\end{equation}
\end{lemma}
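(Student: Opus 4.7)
The plan is to carry over the argument of Lemma~9 to the twisted setting. Let $p \colon [0,1] \times M \to M$ be the projection, and let the pulled-back gerbe $p^*\mathcal{L}$ be equipped with the pulled-back connective structure and curving $p^*K$, whose associated $3$-form is $p^*H$. For $t \in [0,1]$ put $\A(t) = (1-t)\A_0 + t\A_1$; since $\A_{0,[0]}- \A_{1,[0]} \in \Omega^0(M; op^0)$, Corollary~1 (applied on local trivializations of $\mathcal{L}$) shows that $\A(t)$ is a superconnection on $\mathcal{H}$ for every $t$. On the pulled-back projective Hilbert bundle $p^*\mathcal{H}$ over $[0,1] \times M$, consider the superconnection
\begin{equation}
\B = dt \wedge \partial_t + \A(t).
\end{equation}

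A direct computation yields $\B^2 = \A(t)^2 + dt \wedge \dot{\A}(t)$, so that $\theta_{\B, p^*K} = \theta_{\A(t), K} + dt \wedge \dot{\A}(t)$. Since $\A(t)$ has odd total parity, $\dot{\A}(t)$ is odd, and $dt \wedge \dot{\A}(t)$ is therefore even, which shows both that it commutes with the even element $\theta_{\A(t), K}$ and that $(dt \wedge \dot{\A}(t))^2 = 0$. Hence the exponential factorizes as $e^{- \theta_{\B, p^*K}} = e^{- \theta_{\A(t), K}} \bigl( 1 - dt \wedge \dot{\A}(t) \bigr)$, and applying $\Str$ gives, in parallel to (2.12),
\begin{equation}
\Ch(\B, p^*K) = \Ch(\A(t), K) - dt \wedge \Str\left( \dot{\A}(t) e^{- \theta_{\A(t), K}} \right).
\end{equation}

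By part (1) of the preceding lemma, $\Ch(\B, p^*K)$ is $d_{p^*H}$-closed on $[0,1] \times M$. Decompose $d_{p^*H} = d_{M,H} + dt \wedge \partial_t$, noting that both $d_M$ and $H\wedge$ introduce a minus sign when moved past $dt$, so that $d_{M,H}(dt \wedge \alpha_1) = - dt \wedge d_{M,H} \alpha_1$. Splitting any form on $[0,1] \times M$ as $\alpha_0(t) + dt \wedge \alpha_1(t)$, the $dt$-homogeneous component of the closedness condition becomes $\partial_t \alpha_0 = d_{M,H} \alpha_1$. With $\alpha_0 = \Ch(\A(t), K)$ and $\alpha_1 = - \Str\bigl( \dot{\A}(t) e^{- \theta_{\A(t), K}} \bigr)$ this reads
\begin{equation}
\partial_t \Ch(\A(t), K) = - d_H \Str\left( \dot{\A}(t) e^{- \theta_{\A(t), K}} \right),
\end{equation}
and integrating over $t \in [0,1]$ yields the desired formula modulo $\Image(d_H)$.

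The main point requiring care is the bookkeeping of signs and parities: that $dt \wedge \dot{\A}(t)$ commutes with $\theta_{\A(t), K}$ and squares to zero, so that the exponential factorizes cleanly, and that moving $H \wedge$ past $dt$ contributes exactly the sign needed to split $d_{p^*H}$-closedness into the single equation $\partial_t \Ch(\A(t), K) = - d_H (\cdots)$. Once these parity checks are done, the argument is a direct transcription of the untwisted Lemma~9 in the presence of the curving $K$ and the twisted differential $d_H$.
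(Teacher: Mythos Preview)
Your overall strategy is exactly the one the paper has in mind (it simply points back to the untwisted Lemma~9), and your closedness/integration argument is fine. There is, however, one genuine gap: the step where you factor the exponential as
\[
e^{-\theta_{\B,p^*K}} \;=\; e^{-\theta_{\A(t),K}}\bigl(1 - dt\wedge\dot\A(t)\bigr)
\]
is not justified by the parity argument you give. You assert that $dt\wedge\dot\A(t)$ commutes with $\theta_{\A(t),K}$ because both are even. In a \emph{noncommutative} superalgebra such as $\Omega^*(M;op^*(\mathcal H))$, two even elements have supercommutator equal to their ordinary commutator, which has no reason to vanish: already $\A_{[0]}(t)^2$ and $\dot\A_{[0]}(t)$ do not commute in $op^*$.

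The formula you write for $\Ch(\B,p^*K)$ is nonetheless correct; it just needs a different argument. Expand $e^{-\theta_{\A(t),K}-dt\wedge\dot\A(t)}$ by Duhamel around $e^{-\theta_{\A(t),K}}$. Because $(dt)^2=0$ the expansion stops after one step:
\[
e^{-\theta_{\B,p^*K}} \;=\; e^{-\theta_{\A(t),K}} \;-\; dt\wedge\int_0^1 e^{-s\,\theta_{\A(t),K}}\,\dot\A(t)\,e^{-(1-s)\,\theta_{\A(t),K}}\,ds,
\]
where $dt$ can be pulled to the front since $\theta_{\A(t),K}$ (and hence its exponential) has even total parity. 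Only \emph{after} applying $\Str$ does cyclicity (as in (\ref{trace}), noting that the scalar factor $e^{-\kappa_\alpha}$ is central) collapse the $s$-integral to $\Str\bigl(\dot\A(t)\,e^{-\theta_{\A(t),K}}\bigr)$, giving your displayed expression for $\Ch(\B,p^*K)$. From there your argument proceeds correctly. As a minor point, the final identity holds on the nose in $\Omega^{even}(M)$, not merely modulo $\Image(d_H)$, since $d_H^2=0$ makes $d_H\eta(\A_0,\A_1)$ well defined despite $\eta$ being defined only modulo $\Image(d_H)$.
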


\begin{lemma} 
Let $\A_0$, $\A_1$ and $\A_2$ be three superconnections on ${\mathcal H}$
such that $\A_{0,[0]} - \A_{1,[0]} \in \Omega^0(M; op^0({\mathcal H}))$ and
$\A_{1,[0]} - \A_{2,[0]} \in \Omega^0(M; op^0({\mathcal H}))$. Then
\begin{equation} \label{p2.14}
\eta(\A_0, \A_1) +\eta(\A_1, \A_2) = \eta(\A_0, \A_2).
\end{equation}
\end{lemma}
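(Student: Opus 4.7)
The plan is to imitate the proof of Lemma \ref{lem10} from the untwisted case, substituting the preceding twisted homotopy-invariance lemma for eta forms in place of Lemma \ref{lem8}. Everything goes through essentially word-for-word, with the only modification being that equalities of eta forms now live in $\Omega^{odd}(M)/\Image(d_H)$ rather than in $\Omega^{odd}(M)/\Image(d)$.

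Concretely, I would define a two-parameter family $\{\widetilde{\A}(s,t)\}_{s,t \in [0,1]}$ by exactly the formula in (\ref{2.15}): at $s=0$ this is the linear path from $\A_0$ to $\A_2$, while at $s=1$ it is the concatenation (in the $t$ variable) of the linear path from $\A_0$ to $\A_1$ on $[0,\tfrac12]$ with the linear path from $\A_1$ to $\A_2$ on $[\tfrac12,1]$. At $s=0,1$ the endpoints $t=0,1$ agree with $\A_0$ and $\A_2$ respectively, so after a smoothing reparametrization of $[0,1]\times[0,1]$ (collapsing a neighborhood of $t=0,1$ onto the endpoints) the family becomes smooth and satisfies the hypotheses of the twisted analog of Lemma \ref{lem8}. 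At every $(s,t)$, the degree-zero piece $\widetilde{\A}(s,t)_{[0]}$ is an affine combination of $\A_{0,[0]},\A_{1,[0]},\A_{2,[0]}$, and therefore differs from $\A_{0,[0]}$ by an element of $\Omega^0(M;op^0(\mathcal{H}))$ thanks to the two hypotheses; Corollary \ref{cor1} then guarantees that $\widetilde{\A}(s,t)_{[0]} \in \mathcal{P}$, so the integrands are all defined.

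Applying the twisted homotopy lemma to this family, the $\Omega^{odd}(M)/\Image(d_H)$-valued quantity
\begin{equation}
\int_0^1 \Str\!\left(\frac{d\widetilde{\A}(s,t)}{dt}\, e^{-\theta_{\widetilde{\A}(s,t),K}}\right) dt
\end{equation}
is independent of $s$. At $s=0$ this quantity is $\eta(\A_0,\A_2)$ by the definition (\ref{p2.9}). At $s=1$, splitting the $t$-integral at $t=\tfrac12$ and making the obvious linear reparametrizations on each half gives $\eta(\A_0,\A_1)+\eta(\A_1,\A_2)$. Equating the two yields the claim.

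The main point requiring attention — it is less of an obstacle than a bookkeeping check — is that the twisted analog of Lemma \ref{lem8} truly applies to $\widetilde{\A}(s,t)$: one needs the $2$-parameter family to be smooth and to have all degree-zero pieces in $\mathcal{P}$. The first is arranged by the reparametrization; the second is immediate from Corollary \ref{cor1} together with the hypotheses on $\A_{i,[0]}-\A_{j,[0]}$. No new analytic input beyond the twisted homotopy invariance and Corollary \ref{cor1} is needed.
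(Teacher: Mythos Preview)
Your proposal is correct and is precisely the approach the paper intends: the paper does not give a separate proof of this twisted lemma but states that the proofs of these four lemmas ``are similar to those in Subsection \ref{subsect2.2}'', i.e. one repeats the argument of Lemma \ref{lem10} with the twisted homotopy lemma replacing Lemma \ref{lem8}. Your checks that $\widetilde{\A}(s,t)_{[0]}$ stays in $\mathcal{P}$ via Corollary \ref{cor1}, and that a reparametrization is needed for smoothness, are exactly the points the untwisted proof relies on.
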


\begin{lemma} 
Suppose that there is some $c > 0$ so that $\A_{[0]}^2 \ge c^2 \Id$
fiberwise on ${\mathcal H}$.
Put
\begin{equation} 
\eta(\A, \infty) = \int_1^\infty \Str \left( \frac{d \A_t}{dt}
e^{- \: \theta_{\mathbb{A}_t, K}} \right) dt.
\end{equation}
Then
\begin{equation} \label{p2.18}
\Ch(\A) = d_H \eta(\A, \infty)
\end{equation}
\end{lemma}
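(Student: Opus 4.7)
The plan is to mirror the untwisted proof of Lemma \ref{lem11}, with $\A_t^2$ replaced by $\theta_{\mathbb{A}_t, K}$ and $d$ by $d_H$. I start from the infinitesimal form
\begin{equation}
\frac{d}{dt} \Ch(\A_t, K) \: = \: - d_H \, \Str \left( \frac{d \A_t}{dt} \, e^{- \theta_{\mathbb{A}_t, K}} \right),
\end{equation}
obtained by applying the twisted variational identity (\ref{p2.10}) along short pieces of the path $\{\A_t\}_{t \ge 1}$ and differentiating. Integrating from $t = 1$ to $t = L$ yields
\begin{equation}
\Ch(\A_L, K) \: - \: \Ch(\A, K) \: = \: - d_H \int_1^L \Str \left( \frac{d \A_t}{dt} \, e^{- \theta_{\mathbb{A}_t, K}} \right) dt,
\end{equation}
and the remaining task is to pass to the limit $L \to \infty$.

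The essential step is to establish twisted analogs of the estimates (\ref{2.21}) and (\ref{2.22}). On a local patch $U_\alpha$ of a trivializing cover for $\mathcal{L}$, we have $\theta_{\mathbb{A}_t, K} \big|_{U_\alpha} = \mathbb{A}_{\alpha, t}^2 + \kappa_\alpha$. Since $\kappa_\alpha \in \Omega^2(U_\alpha)$ is a scalar $2$-form, it has even total parity and therefore supercommutes with every element of $\Omega^*(U_\alpha; op^*(\mathcal{H}_\alpha))$, giving
\begin{equation}
e^{- \theta_{\mathbb{A}_t, K}} \big|_{U_\alpha} \: = \: e^{- \kappa_\alpha} \, e^{- \mathbb{A}_{\alpha, t}^2}.
\end{equation}
Here $e^{- \kappa_\alpha}$ is a bounded inhomogeneous form whose series truncates at degree $\dim M$. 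Combined with the proof of Lemma \ref{lem11} applied to the local superconnection $\mathbb{A}_\alpha$, whose degree-zero part still satisfies $(\mathbb{A}_\alpha)_{[0]}^2 \ge c^2 \, \Id$, this gives uniform bounds on compact subsets of $M$ of the form $\Str e^{- \theta_{\mathbb{A}_t, K}} = O(e^{- c^2 t^2 / 2})$ and $\Str \left( \frac{d \A_t}{dt} \, e^{- \theta_{\mathbb{A}_t, K}} \right) = O(e^{- c^2 t^2 / 2})$.

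With these estimates in hand, the integral defining $\eta(\A, \infty)$ converges, the boundary term $\Ch(\A_L, K)$ tends to zero as $L \to \infty$, and dominated convergence applied componentwise in form degree justifies interchanging $d_H$ with the limit. Taking $L \to \infty$ in the integrated identity then yields $- \Ch(\A, K) = - d_H \, \eta(\A, \infty)$, which is the claim. The main point of difference from the untwisted Lemma \ref{lem11} is the extra curving term $\kappa_\alpha$ in the exponent; the graded-commutativity factorization above makes it a harmless bounded prefactor, so all tail-decay estimates carry over essentially verbatim.
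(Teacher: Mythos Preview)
Your proof is correct and follows exactly the approach the paper indicates: it explicitly states that the proofs of these four twisted lemmas are ``similar to those in Subsection~\ref{subsect2.2}'' and gives no further details, so your adaptation of Lemma~\ref{lem11} with the factorization $e^{-\theta_{\A_t,K}}\big|_{U_\alpha} = e^{-\kappa_\alpha}\,e^{-\A_{\alpha,t}^2}$ is precisely what is needed. One small remark: invoking (\ref{p2.10}) for the infinitesimal transgression identity is slightly imprecise, since that lemma is stated for linear paths with $op^0$ difference in degree zero, whereas $\A_{t_0,[0]}-\A_{t_1,[0]}=(t_0-t_1)\A_{[0]}\in op^1$; the cleaner route is to argue directly as in the proof of Lemma~\ref{lem9} (closedness of $\Ch(\B)$ on $[1,L]\times M$ with $\B=dt\wedge\partial_t+\A_t$), which yields the infinitesimal formula for any smooth family of superconnections without that hypothesis.
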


\subsection{Definition of twisted differential $K$-theory} \label{psubsect3.1}

Let $M$ be a smooth manifold. Let $\mathcal{L}$ be a gerbe on $M$ with a
connective structure $\nabla$.

Let $K$ be a curving of $\nabla$. Put $H=c(K)$.

\begin{definition} \label{pdef4}
  A cocycle for
$\check{K}^0_{\mathcal L}(M)$ 
is a triple $\left[ \mathcal{H}, \A, \omega \right]$
where
\begin{enumerate}
\item
  $\mathcal{H}$ is a $\Z_2$-graded
  ${\mathcal L}$-projective
  Hilbert bundle over $M$,
\item $\A$ is a superconnection on ${\mathcal H}$ and
\item $\omega \in \Omega^{odd}(M) /\Image(d_H)$.
\end{enumerate}
\end{definition}

\begin{definition} \label{pdef5}
  The twisted differential $K$-theory group
  $\check{K}^0_{\mathcal L}(M)$ 
  is the quotient of the free abelian
group generated by the isomorphism classes of
cocycles, by the subgroup generated by
the following relations :
\begin{enumerate}
\item If $\left[ \mathcal{H}, \A, \omega \right]$ and
$\left[ \mathcal{H}', \A', \omega' \right]$ are cocycles then
\begin{equation} \label{p3.2}
\left[ \mathcal{H}, \A, \omega \right]+
\left[ \mathcal{H}', \A', \omega' \right]=\left[ \mathcal{H} \oplus \mathcal{H}', \A \oplus \A', \omega+\omega' \right].
\end{equation}
\item If $\A_{[0]}$ is invertible then
\begin{equation} \label{p3.3}
\left[ \mathcal{H}, \A, \omega \right] = \left[ 0, 0, \omega+\eta(\A, \infty) \right].
\end{equation}
\item Suppose that $\A_0$ and $\A_1$ are superconnections on
$\mathcal{H}$ such that
$\A_{0, [0]}- \A_{1, [0]} \in \Omega^0(M; op^0)$. Then
\begin{equation} \label{p3.4}
\left[ \mathcal{H}, \A_0, \omega \right]=\left[ \mathcal{H}, \A_1, \omega+\eta(\A_0, \A_1) \right].
\end{equation}
\end{enumerate}
\end{definition}

The map $\left[ \mathcal{H}, \A, \omega \right] \rightarrow
\Ch(\A) + d_H \omega$ passes to a map
$\check{K}^0_{\mathcal L}(M) \rightarrow
\Omega^{even}(M)$
whose image is $d_H$-closed.

\begin{proposition}
  Up to isomorphism, $\check{K}^0_{\mathcal L}(M)$ only depends
  on ${\mathcal L}$ through its stable isomorphism class;
  it is independent
  of the choices of connective structure
  and curving. 
\end{proposition}
\begin{proof}
Fixing the connective structure
  $\nabla$ on the gerbe ${\mathcal L}$, 
if $K'= K+\tau$ is another curving of $\nabla$ then the map
$\left[ \mathcal{H}, \A, \omega \right] \rightarrow \left[ \mathcal{H}, \A, e^{-\tau}\omega \right]$ induces an isomorphism of the corresponding
$\check{K}^0_{\mathcal L}(M)$-groups.

To see what happens when the connective structure on ${\mathcal L}$ varies,
suppose that 
$\nabla'=(\nabla_{\alpha \beta}')$ is another connective structure.
One can find a collection $\phi_\alpha \in \Omega^1(U_\alpha)$ such that 
$\nabla'_{\alpha \beta} -\nabla_{\alpha \beta} = \phi_\alpha -\phi_\beta$. 
If $K=(\kappa_\alpha)$ is a curving of $\nabla$ then we obtain a curving
$K'=(\kappa'_\alpha)$ of $\nabla^\prime$ by putting
$\kappa'_\alpha = \kappa_\alpha +d \phi_\alpha$.
The corresponding $3$-form $H' = d \kappa'_\alpha$ is the same as
$H = d\kappa_\alpha$. Suppose now that $\A$ is a superconnection on
an ${\mathcal L}$-projective Hilbert bundle, as defined using the
connective structure $\nabla$. Put
$\A_\alpha' = \A_\alpha -\phi_\alpha$. This defines a superconnection
$\A^\prime$ on 
$\mathcal{H}$ that is compatible with $\nabla'$.
Note that $\theta_{\A', K'}=\theta_{\A, K}$. It follows that the map 
$\left[ \mathcal{H}, \A, \omega \right] \rightarrow \left[ \mathcal{H}, \A', \omega \right]$  induces an isomorphism of the corresponding
$\check{K}^0_{\mathcal L}(M)$-groups. This isomorphism
depends on the choice of the $\phi_\alpha$'s.

To check what happens under stable isomorphism of the gerbe,
suppose that ${\mathcal L}^\prime$ is given by
${\mathcal L}_{\alpha \beta}^\prime =
\widehat{\mathcal L}_{\alpha} \otimes {\mathcal L}_{\alpha \beta}
\otimes \widehat{\mathcal L}_{\beta}^{-1}$.
Choose Hermitian connections $\widehat{\nabla}_\alpha$ on the
${\mathcal L}_\alpha$'s.
Put $\nabla^\prime_{\alpha \beta} = \widehat{\nabla}_\alpha
\otimes \nabla_{\alpha \beta} \otimes \widehat{\nabla}_\beta^{-1}$.
Given a cocycle
$\left[ \mathcal{H}, \A, \omega \right]$ for
$\check{K}^0_{\mathcal L}(M)$, we obtain a cocycle
$\left[ \mathcal{H}^\prime, \A^\prime, \omega^\prime \right]$ for
$\check{K}^0_{{\mathcal L}^\prime}(M)$ by
$\mathcal{H}^\prime_\alpha = \mathcal{H}_\alpha
\otimes \widehat{L}_\alpha^{-1}$,
$\A^\prime_\alpha = \A_\alpha \otimes \widehat{\nabla}_\alpha^{-1}$ and
$\omega^\prime = \omega$. This gives an isomorphism between
$\check{K}^0_{\mathcal L}(M)$ and $\check{K}^0_{{\mathcal L}^\prime}(M)$.
It depends on the choice of the $\widehat{\nabla}_\alpha$'s.
\end{proof}

\appendix

\section{Chern character in relative cohomology} \label{app}

In \cite{Quillen (1985)}, Quillen stated without proof that
if $\A$ is a superconnection on a finite dimensional vector
bundle $E$ over $M$, and the degree-$0$ component $\A_{[0]}$ is
invertible on an open subset $U \subset M$, then
the pair $(\Ch(\A), \eta(\A, \infty))$ represents the Chern
character of $E$ in relative cohomology.
In this appendix
we provide a proof of the statement in the
more general setting of superconnections on Hilbert bundles.
We give an application to a difference formula for eta forms.

\begin{remark}
As mentioned by Paradan and Vergne \cite{Paradan-Vergne (2009)},
in the finite dimensional case one can also prove Quillen's claim
by showing that the pair satisfies Schneiders' axiomatic
characterization of the relative Chern character
\cite[Proposition 4.5.2]{Schneiders (2000)}.
\end{remark}

Let $\A$ be a superconnection, in the sense of Definition \ref{def3},
on a $\Z_2$-graded Hilbert bundle ${\mathcal H}$ over a manifold $M$.
Let $U\subset M$ be an open set such that
$\A_{[0]}^2 \ge c^2 \Id >0$ on $U$,
for some $c > 0$.
Then $\eta(\A, \infty)$ is defined on $U$.

Recall that $\HH^*(M, U; \R)$ is the cohomology of the complex
\begin{equation} \label{A.1}
\Omega^*(M, U; \R) =
\Omega^*(M) \oplus \Omega^{*-1}(U)
\end{equation}
with differential
\begin{equation} \label{A.2}
d(\omega, \sigma) = (d \omega, i^* \omega - d \sigma),
\end{equation}
where $i : U \rightarrow M$ is the inclusion map.

\begin{lemma} \label{lem16} The pair $\left( \Ch (\mathbb{A}), \eta(\mathbb{A}, \infty)  \right)$ defines a class
in $\HH^*(M, U; \R)$
\end{lemma}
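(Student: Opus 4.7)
The plan is to show directly that $(\Ch(\mathbb{A}), \eta(\mathbb{A}, \infty))$ is a cocycle in the relative complex $(\Omega^*(M) \oplus \Omega^{*-1}(U), d)$ with $d(\omega, \sigma) = (d\omega, i^*\omega - d\sigma)$; well-definedness of the resulting cohomology class then requires checking that different choices of representative for $\eta(\mathbb{A}, \infty) \in \Omega^{odd}(U)/\Image(d)$ produce cohomologous cocycles.

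First I would verify closedness of the first component. By Lemma \ref{lem7}(2), $\Ch(\mathbb{A}) = \Str e^{-\mathbb{A}^2}$ is a closed form on all of $M$, so the first component of $d(\Ch(\mathbb{A}), \eta(\mathbb{A}, \infty))$ vanishes. Next I would verify that the second component $i^*\Ch(\mathbb{A}) - d\eta(\mathbb{A}, \infty)$ vanishes on $U$. By hypothesis, on $U$ we have $\mathbb{A}_{[0]}^2 \ge c^2 \Id$ fiberwise, so the invertibility condition of Lemma \ref{lem11} is satisfied and that lemma gives
\begin{equation}
i^* \Ch(\mathbb{A}) = \Ch(\mathbb{A})\big|_U = d \eta(\mathbb{A}, \infty)
\end{equation}
in $\Omega^*(U)$. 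Combining, $d(\Ch(\mathbb{A}), \eta(\mathbb{A}, \infty)) = (0, 0)$, so the pair is a relative cocycle.

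Finally I would address well-definedness. Since $\eta(\mathbb{A}, \infty)$ is only specified modulo $\Image(d)$, pick two representatives $\eta$ and $\eta' = \eta + d\tau$ with $\tau \in \Omega^{even-1}(U)$. Then
\begin{equation}
(\Ch(\mathbb{A}), \eta') - (\Ch(\mathbb{A}), \eta) = (0, d\tau) = d(0, -\tau),
\end{equation}
so the two cocycles differ by a coboundary and represent the same class in $H^*(M, U; \R)$. There is no serious obstacle here; the lemma is essentially a bookkeeping consequence of Lemmas \ref{lem7}(2) and \ref{lem11}, packaged into the relative complex. The only mild subtlety is confirming that the ambiguity in $\eta(\mathbb{A}, \infty)$ is absorbed by the coboundaries of the relative complex, which is immediate from the explicit form of the relative differential.
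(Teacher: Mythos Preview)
Your proof is correct and follows exactly the approach of the paper, which simply cites Lemmas \ref{lem7} and \ref{lem11}; you have merely unpacked what those citations mean in the relative complex. One minor remark: the well-definedness paragraph is unnecessary here, since $\eta(\mathbb{A},\infty)$ is defined in (\ref{2.17}) by a specific convergent integral and hence is an honest form in $\Omega^{odd}(U)$, not merely a class modulo $\Image(d)$---but the extra check does no harm.
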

\begin{proof}
This follows from Lemmas \ref{lem7} and \ref{lem11}.
\end{proof}

\begin{lemma} \label{lem17} Let $\mathbb{A}$ and
$\mathbb{A}^\prime$ be two superconnections as above such that
$\A_{[0]} - \A_{[0]}^\prime \in \Omega^0(M; op^0({\mathcal H}))$ and
$\A_{[0]} = \A_{[0]}^\prime$ on $ U$. Then
\begin{equation} \label{A.3}
[\left( \Ch (\mathbb{A}), \eta(\mathbb{A}, \infty)  \right)]=[\left( \Ch (\mathbb{A}'), \eta(\mathbb{A}', \infty)  \right)] \in \HH^*(M, U; \R).
\end{equation}
\end{lemma}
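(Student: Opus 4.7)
The plan is to exhibit an explicit relative cochain $(\omega, \sigma) \in \Omega^{odd}(M) \oplus \Omega^{even}(U)$ whose differential in the relative complex $(\Omega^*(M, U), d)$ realizes the difference of the two cocycles, that is, to find $(\omega, \sigma)$ with
\begin{equation*}
(d\omega,\; i^*\omega - d\sigma) \;=\; \bigl( \Ch(\A') - \Ch(\A),\; \eta(\A', \infty) - \eta(\A, \infty) \bigr).
\end{equation*}

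First I would set $\omega = \eta(\A', \A) \in \Omega^{odd}(M)$, taking the specific representative given by the integral \eqref{2.9}. This is well defined because $\A_{[0]} - \A'_{[0]} \in \Omega^0(M; op^0(\mathcal{H}))$ by hypothesis, and Lemma \ref{lem9} immediately yields $d\omega = \Ch(\A') - \Ch(\A)$ as an honest equality of forms on $M$, matching the first component of the desired difference.

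For the second component, the key observation is that the linear path $\A(t) = (1-t)\A + t\A'$ satisfies $\A(t)_{[0]}\big|_U = \A_{[0]}\big|_U$ for every $t \in [0,1]$, because $\A_{[0]} = \A'_{[0]}$ on $U$ by hypothesis. Hence the degree-zero part stays (uniformly) invertible on $U$ throughout the path, and the product-bundle case of Lemma \ref{lem12} (equation \eqref{2.25}) applies on $U$ to give
\begin{equation*}
\eta(\A', \infty)\big|_U - \eta(\A, \infty)\big|_U \;=\; \eta(i^*\A', i^*\A) \;=\; i^*\omega
\end{equation*}
in $\Omega^{even}(U)/\Image(d)$. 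With representatives fixed, there therefore exists $\sigma \in \Omega^{even}(U)$ with $d\sigma = i^*\omega - \bigl(\eta(\A', \infty) - \eta(\A, \infty)\bigr)$, and then $(\omega, \sigma)$ is the sought relative primitive.

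The only nontrivial ingredient is the invertibility of $\A(t)_{[0]}$ along the entire linear path on $U$, which is what legitimizes invoking Lemma \ref{lem12} there; the hypothesis $\A_{[0]} = \A'_{[0]}$ on $U$ makes this automatic. Everything else is a direct assembly from Lemmas \ref{lem9} and \ref{lem12}.
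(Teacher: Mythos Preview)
Your argument is correct and is essentially the same as the paper's, just organized differently. The paper forms the superconnection $\B = dt\wedge\partial_t + t\A + (1-t)\A'$ on $[0,1]\times M$, notes that $\B_{[0]}$ is invertible on $[0,1]\times U$ so that $(\Ch(\B),\eta(\B,\infty))$ is a relative cocycle there (Lemma~\ref{lem16}), and integrates over $[0,1]$ to obtain the relative primitive in one stroke; this yields exactly $\omega=\int_0^1\Ch(\B)=-\eta(\A',\A)$ and an explicit $\sigma=\int_0^1\eta(\B,\infty)$. You instead treat the two components separately via Lemmas~\ref{lem9} and~\ref{lem12}, obtaining $\sigma$ abstractly from the equality modulo exact forms; since the proofs of those lemmas are precisely this same fiber integration, the content is identical. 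One small slip: your ``in $\Omega^{even}(U)/\Image(d)$'' should read $\Omega^{odd}(U)/\Image(d)$, as the eta forms are odd.
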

\begin{proof} On the product space $[0,1] \times M$,
consider the superconnection
\begin{equation} \label{A.4}
\B = dt \wedge \partial_t +
t \mathbb{A} +(1-t) \mathbb{A}^\prime.
\end{equation}
Note that $\B_{[0]}^2 \ge c^2 \Id$
on $[0,1] \times U$.
Since $d \Ch(\B) = 0$ in $\Omega^*([0,1] \times M)$, one finds that
in $\Omega^*(M, U; \R)$, we have
\begin{align} \label{A.5}
& d \left( \int_0^1 \Ch (\B), \int_0^1 \eta(\B, \infty)  \right)
= \\
& \left( \Ch (\mathbb{A}), \eta(\mathbb{A}, \infty)  \right)-\left( \Ch (\mathbb{A}'), \eta(\mathbb{A}', \infty)  \right). \notag
\end{align}
The lemma follows.
\end{proof}

Since $\A_{[0]}$ is invertible on $U$, we can define a class
$\Ind(\A_{[0]}^\pm) \in \K^0(M, U)$ as follows.

Let $\gamma$ denote the $\Z_2$-grading operator on ${\mathcal H}$.
Put $\tH = \mathcal{H}\oplus \mathcal{H}$ with the $\Z_2$-grading operator
$\tg = \begin{bmatrix} \gamma& 0 \\
0& -\gamma \\
\end{bmatrix}$.
Let $f\in C^\infty(\mathbb{R})$ be a real-valued odd function such that
$xf(x) = 1 - \xi(x)$, with $\xi(0)=1$ and $\supp \xi(x) \subset [-c, c]$.
Put $Q= f(\A_{[0]})$.
Then $Q$ is a parametrix for $\A_{[0]}$
which is odd with respect to $\gamma$, self-adjoint and is the
inverse of $\A_{[0]}$ over $U$.

Put $S_0=1-Q \A_{[0]}$ and  $S_1=1-\A_{[0]}Q$.
Note that $S_0$ and $S_1$ vanish over $U$.
Put $L=\begin{bmatrix} S_0 &-(1+S_0)Q\\ \A_{[0]} &S_1 \end{bmatrix}$, with  $L^{-1}=\begin{bmatrix} S_0 &(1+S_0)Q\\-\A_{[0]} &S_1 \end{bmatrix}$. These are operators on $\tH$ that are even with respect to $\tg$.

Put $P=L^{-1}\begin{bmatrix}1 &0 \\0 &0 \end{bmatrix} L$ and
$P_0=\begin{bmatrix}0 &0 \\0 &1 \end{bmatrix}$. They are both
even projection operators. Then
\begin{equation}
  P-P_0 =
  \begin{pmatrix}
    S_0^2 & -S_0(1+S_0)Q \\
    -\A_{[0]} S_0 & - S_1^2
    \end{pmatrix}
  \end{equation}
is a smooth family of finite rank operators, i.e.
$P - P_0 \in \Omega^0(M; {\mathcal L}^{fr}(H))$; the smoothness
can be seen by repeated differentiation. 
Also, $P - P_0$ vanishes on $U$.

Put
$P^\pm = P|_{\tH^\pm}$ and
$P_0^\pm = P_0|_{\tH^\pm}$.

\begin{definition}
The index $\Ind \left( \A_{[0]}^\pm \right) \in K^0(M,U)$ is
represented by
the virtual projection
$[P^\pm - P_0^\pm]$.
\end{definition}

\begin{remark} \label{extrarem}
As in \cite[Chapter II.9.$\alpha$]{Connes (1994)},
$[P^\pm - P_0^\pm]$ is the index class in $K^0(M)$.
As $P^\pm - P_0^\pm$ vanishes on $U$, we can consider
$[P^\pm - P_0^\pm]$ to give a class in $K^0(M, U)$.
To justify this statement,
suppose that there is a closed subset $A \subset M$ which is a
strong deformation retract of $U$ (as is often the case).
Then there are  canonical isomorphisms
$K^0(M, U) \cong K^0(M, A) \cong \widetilde{K}^0(M/A)$.
The virtual projection $[P^\pm - P_0^\pm]$ descends to a
reduced $K$-theory class of $M/A$.
\end{remark}

Note that $\Ind \left( \A_{[0]}^- \right) =
 - \Ind \left( \A_{[0]}^+ \right)$.

The real-valued Chern character
$\Ch \left (\Ind \left( \A_{[0]}^+ \right) \right)$   in
$\HH^{even}(M, U; \R)$ can be represented as follows.
Put $\widetilde{\A} = \A \oplus \A$, a superconnection on
$\widetilde{H}$.
Then $\widetilde{\A}_{[1]}$ is a connection on $\widetilde{H}$.

\begin{definition}
The Chern character
of $\Ind \left( \A_{[0]}^+ \right)$ in $\widetilde{\HH}^{even}(M/A; \R)$ is
represented by
\begin{align} \label{A.7}
& \Ch \left (\Ind \left( \A_{[0]}^+ \right) \right) = \\
& \left[ \left( \frac12 \Tr_s \left( P e^{-(P\circ
\widetilde{\A}_{[1]} \circ P)^2} P
- P_0 e^{-(P_0 \circ \widetilde{\A}_{[1]}
\circ P_0)^2} P_0 \right), 0 \right) \right]. \notag
\end{align}
\end{definition}

\begin{remark}
To justify this definition, we first note that the closed form
\begin{equation} \label{nnew}
\frac12 \Tr_s \left( P e^{-(P\circ
\widetilde{\A}_{[1]} \circ P)^2} P
- P_0 e^{-(P_0 \circ \widetilde{\A}_{[1]}
\circ P_0)^2} P_0 \right)
\end{equation}
represents the Chern character of the index in $\HH^{even}(M; \R)$.
It has support in $M\setminus U$ and so,
in the setting of Remark \ref{extrarem}, extends
to a compactly supported
form in $(M/A) - (A/A)$. Under the isomorphisms
${\HH}^{even}_c((M/A) - (A/A); \R)
\cong \HH^{even}(M/A, A/A; \R) \cong
\HH^{even}(M, A; \R) \cong \HH^{even}(M, U; \R)$,
the form gets mapped to (\ref{A.7}).
\end{remark}

\begin{proposition} \label{prop9}
\begin{equation} \label{A.6}
\Ch \left (\Ind \left( \A_{[0]}^+ \right) \right) =
[\left( \Ch (\mathbb{A}), \eta(\mathbb{A}, \infty)  \right)].
\end{equation}
\end{proposition}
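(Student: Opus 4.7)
The plan is to use the finite-dimensional reduction of Subsection \ref{subsect3.2} to write $[(\Ch(\mathbb{A}), \eta(\mathbb{A}, \infty))]$ explicitly as the relative Chern character of a finite rank virtual bundle, and then to identify that virtual bundle with the index.

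First I would choose a finite-rank Hermitian bundle $E$ with connection $\nabla^E$ and a bundle morphism $s : E \to \mathcal{H}^-$ making $\mathbb{A}_{[0]}^+ + s$ surjective on $M$; since $\mathbb{A}_{[0]}^+$ is already invertible on $U$, one can arrange $s|_U = 0$. Form $\widehat{\mathbb{A}}$ on $\widehat{\mathcal{H}} = \mathcal{H} \oplus \widetilde{E}$ as in (\ref{3.8}); then $\Ker(\widehat{\mathbb{A}}_{[0]})$ is a finite rank $\Z_2$-graded vector bundle, and on $U$ one has $\widehat{\mathbb{A}} = \mathbb{A} \oplus \nabla^{\widetilde{E}}$ and $\Ker(\widehat{\mathbb{A}}_{[0]})|_U = \widetilde{E}$. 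Let $Q$ be projection onto $\Ker(\widehat{\mathbb{A}}_{[0]})$ and $\widehat{\mathbb{A}}' = (I-Q)\widehat{\mathbb{A}}(I-Q) + Q\widehat{\mathbb{A}}_{[1]}Q$; a direct calculation shows $\widehat{\mathbb{A}}'|_U = \mathbb{A} \oplus \nabla^{\widetilde{E}}$ as well.

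Applying Lemma \ref{lem9} to the linear homotopies $\mathbb{A} \oplus \nabla^{\widetilde{E}} \leadsto \widehat{\mathbb{A}} \leadsto \widehat{\mathbb{A}}'$ (both change the $[0]$-part only by $op^0$ operators that vanish on $U$), Lemma \ref{lem11} to the off-kernel summand $(I-Q)\widehat{\mathbb{A}}(I-Q)$ (whose $[0]$-part is invertible globally on $M$), and the identity $\Ch(\mathbb{A} \oplus \nabla^{\widetilde{E}}) = \Ch(\mathbb{A})$ coming from the balanced grading of $\widetilde{E}$, one arrives at the global identity $\Ch(\mathbb{A}) = \Ch(Q\widehat{\mathbb{A}}_{[1]}Q) + d\alpha$ with
\begin{equation*}
\alpha = \eta((I-Q)\widehat{\mathbb{A}}(I-Q), \infty) + \eta(\widehat{\mathbb{A}}, \widehat{\mathbb{A}}') + \eta(\mathbb{A} \oplus \nabla^{\widetilde{E}}, \widehat{\mathbb{A}}).
\end{equation*}
On $U$ the last two summands vanish (their linear paths are constant because $s|_U = 0$), while the first restricts to $\eta(\mathbb{A}, \infty)$ via the direct-sum splitting of $(I-Q)\widehat{\mathbb{A}}(I-Q)|_U$. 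Since moreover $\Ch(Q\widehat{\mathbb{A}}_{[1]}Q)|_U = \Ch(\nabla^{\widetilde{E}}) = 0$, the pair $(\Ch(Q\widehat{\mathbb{A}}_{[1]}Q), 0)$ is a relative cocycle and $d_{\mathrm{rel}}(-\alpha, 0)$ witnesses the equality $[(\Ch(\mathbb{A}), \eta(\mathbb{A}, \infty))] = [(\Ch(Q\widehat{\mathbb{A}}_{[1]}Q), 0)]$ in $\HH^*(M, U; \R)$.

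To finish, I would identify the virtual bundle $\Ker(\widehat{\mathbb{A}}_{[0]}^+) - \Ker(\widehat{\mathbb{A}}_{[0]}^-)$ with $\Ind(\mathbb{A}_{[0]}^+) \in K^0(M, U)$: stabilization by the balanced $\widetilde{E}$ preserves the index class, and the canonical trivialization on $U$ is given by the identity of $\widetilde{E}$. A standard finite-dimensional computation via the parametrix then matches the relative Chern character of this virtual bundle with formula (\ref{A.7}); cf. \cite[Chapter II.9.$\alpha$]{Connes (1994)} and \cite{Quillen (1985)}. The principal obstacle is the calculation that $i^*\alpha = \eta(\mathbb{A}, \infty)$ in $\Omega^{odd}(U)/\Image(d)$: the arrangement $s|_U = 0$ is essential, as it decouples $\widehat{\mathbb{A}}|_U$ and $\widehat{\mathbb{A}}'|_U$ into direct sums respecting the $\mathcal{H} \oplus \widetilde{E}$ splitting, killing the two Chern-Simons corrections and reducing the off-kernel eta form to $\eta(\mathbb{A}, \infty)$.
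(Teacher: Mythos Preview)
Your approach is correct but takes a genuinely different route from the paper's. The paper never leaves the parametrix model on $\widetilde{\mathcal{H}} = \mathcal{H} \oplus \mathcal{H}$: it first replaces $\widetilde{\A}_{[1]}$ by the full $\widetilde{\A}$ in (\ref{A.7}) via a transgression supported away from $U$, then splits the resulting difference of supertraces as a difference of two relative cocycles $(\Ch(P\circ\widetilde{\A}\circ P),\, \eta(P\circ\widetilde{\A}\circ P,\infty))$ and $(\Ch(P_0\circ\widetilde{\A}\circ P_0),\, \eta(P_0\circ\widetilde{\A}\circ P_0,\infty))$ (using $P=P_0$ on $U$), identifies the $P_0$-piece as $(-\Ch(\A),\,-\eta(\A,\infty))$ by inspection, and reduces the $P$-piece to $(\Ch(\A),\,\eta(\A,\infty))$ by conjugating $P$ back to $\left(\begin{smallmatrix}1&0\\0&0\end{smallmatrix}\right)$ via $L$ and invoking Lemma~\ref{lem17}. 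You instead import the kernel-bundle reduction of Subsection~\ref{subsect3.2}; this is more geometric and reuses the eta-form calculus already in place, at the price of deferring the final matching of $[\Ker(\widehat{\A}_{[0]})]$ with the projection model $[P^\pm - P_0^\pm]$ (and hence with the specific formula~(\ref{A.7})) to outside references.

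One point in your argument deserves more care. The assertion that $s$ may be chosen with $s\big|_U = 0$ is not automatic from invertibility of $\A_{[0]}$ on $U$; it genuinely uses the \emph{uniform} lower bound $\A_{[0]}^2 \ge c^2\,\Id$. By continuity of $m \mapsto \A_{[0]}(m)^2$ into $op^2$ this bound extends to $\overline{U}$, and a Neumann-series argument then shows $\A_{[0]}^+$ is surjective on an open $W \supset \overline{U}$. A smooth Urysohn cutoff $\rho$ with $\rho\big|_{\overline{U}} = 0$ and $\rho\big|_{M \setminus W} = 1$ now lets you replace any $s$ from the construction of Subsection~\ref{subsect3.2} by $\rho s$ while retaining surjectivity of $\A_{[0]}^+ + \rho s$ (since at each point either $\A_{[0]}^+$ is already surjective or $\rho > 0$). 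Without the uniform $c$ this step can fail, so it is worth spelling out. Also note that your coboundary witness should be $d_{\mathrm{rel}}(\alpha,0)$, not $d_{\mathrm{rel}}(-\alpha,0)$, given your sign conventions and (\ref{A.2}).
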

\begin{proof}

We claim that
\begin{align} \label{A.8}
& \left[ \left(
\Tr_s \left( P e^{-(P\circ \widetilde{\A}_{[1]} \circ P)^2} P -
P_0 e^{-(P_0 \circ \widetilde{\A}_{[1]} \circ P_0)^2} P_0 \right), 0
\right) \right] = \\
& \left[ \left( \Tr_s \left( P e^{-(P\circ \ta \circ P)^2} P -
P_0 e^{-(P_0 \circ \ta \circ P_0)^2} P_0 \right), 0 \right) \right] \notag
\end{align}
in $\HH^{even}(M, U; \R)$. To see this,
for $t \in [0,1]$ put
$\widetilde{\mathbb{A}}(t) = t \widetilde{\A}
+ (1-t) \widetilde{\A}_{[1]}$,
a superconnection on $\tH$.
Now
\begin{equation} \label{A.9}
P e^{-(P\circ \ta(t) \circ P)^2} P -
P_0 e^{-(P_0 \circ \ta(t) \circ P_0)^2} P_0
\end{equation}
is a $(2 \times 2)$-matrix
with entries in $\Omega^*(M; {\mathcal L}^{fr}({\mathcal H}))$, that are
smooth
in $t$.
Using the finite rank property, it is easy to justify that
\begin{align} \label{A.10}
& \frac{d}{dt}
\Tr_s \left( P e^{-(P\circ \ta(t) \circ P)^2} P -
P_0 e^{-(P_0 \circ \ta(t) \circ P_0)^2} P_0 \right) = \\
&
d \Tr_s \left( P \frac{d\ta(t)}{dt} P e^{-(P\circ \ta(t) \circ P)^2} P -
P_0 \frac{d\ta(t)}{dt} P_0e^{-(P_0 \circ \ta(t) \circ P_0)^2} P_0 \right),
\notag
\end{align}
so
\begin{align} \label{A.11}
& \Tr_s \left( P e^{-(P\circ \widetilde{\A}_{[1]} \circ P)^2} P -
P_0 e^{-(P_0 \circ \widetilde{\A}_{[1]} \circ P_0)^2} P_0 \right)
 = \\
& \Tr_s \left( P e^{-(P\circ \ta \circ P)^2} P -
P_0 e^{-(P_0 \circ \ta \circ P_0)^2} P_0 \right) + \notag \\
& d \int_0^1
\Tr_s \left( P \frac{d\ta(t)}{dt} P e^{-(P\circ \ta(t) \circ P)^2} P -
P_0 \frac{d\ta(t)}{dt} P_0e^{-(P_0 \circ \ta(t) \circ P_0)^2} P_0 \right)
\: dt. \notag
\end{align}
This proves (\ref{A.8}).

Considering
$P\circ \ta \circ P$ to be a superconnection on
$\Image(P)$, and $P_0 \circ \ta \circ P_0$ to be a superconnection on
$\Image(P_0)$,
Lemma \ref{lem16} implies that
\begin{equation} \label{A.12}
\left( \Tr_s  \left( e^{-(P\circ \ta \circ P)^2} \right),
\eta(P \circ \ta \circ P, \infty) \right)
\end{equation}
and
\begin{equation} \label{A.13}
\left( \Tr_s \left( e^{-(P_0\circ \ta \circ P_0)^2} \right),
\eta(P_0 \circ \ta \circ P_0, \infty) \right)
\end{equation}
are closed elements of $\Omega^{even}(M, U)$.
Since $P=P_0$ on $U$, we have
\begin{align} \label{A.14}
& \left[ \left( \Tr_s \left( P e^{-(P\circ \ta \circ P)^2} P -
P_0 e^{-(P_0 \circ \ta \circ P_0)^2} P_0 \right), 0 \right) \right] = \\
& \left[\left( \Tr_s  \left( e^{-(P\circ \ta \circ P)^2} \right),
\eta(P \circ \ta \circ P, \infty) \right) \right]- \notag \\
& \left[\left( \Tr_s \left( e^{-(P_0\circ \ta \circ P_0)^2} \right),
\eta(P_0 \circ \ta \circ P_0, \infty) \right) \right]. \notag
\end{align}

Next, since $P_0 = \begin{bmatrix}0 &0 \\0 &1 \end{bmatrix}$ and
the $\Z_2$-grading operator $\widetilde{\gamma}$ is $- \gamma$ on
$\Image(P_0)$, we have
\begin{equation} \label{A.15}
\left( \Tr_se^{-(P_0\circ \ta \circ P_0)^2} , \eta(P_0 \circ \ta \circ P_0, \infty) \right) =
( -\Ch (\mathbb{A}), -\eta(\mathbb{A}, \infty)).
\end{equation}

Finally, put
$P_1 = \begin{bmatrix}1 &0 \\0 &0 \end{bmatrix}$.
Then
\begin{equation} \label{A.16}
P \circ \ta \circ P =
L^{-1} P_1 L \circ \ta \circ L^{-1} P_1 L.
\end{equation}
Hence
\begin{align} \label{A.17}
& (\Ch(P\circ \ta \circ P), \eta(P \circ \ta \circ P, \infty))= \\
& (\Ch(P_1 L \circ \ta \circ L^{-1} P_1), \eta(P_1 L \circ \ta \circ L^{-1} P_1, \infty)). \notag
\end{align}
The superconnections $\mathbb{A}$ and $P_1 L \circ \ta \circ L^{-1} P_1$ on
$\Image(P_1) = \mathcal{H}$ satisfy the conditions of
Lemma \ref{lem17}. Hence
\begin{align} \label{A.18}
& [(\Ch(P_1 L \circ \ta \circ L^{-1} P_1), \eta(P_1 L \circ \ta \circ L^{-1} P_1, \infty))]= \\
& [\left( \Ch (\mathbb{A}), \eta(\mathbb{A}, \infty)  \right)]. \notag
\end{align}

The proposition now follows from combining
(\ref{A.7}), (\ref{A.8}), (\ref{A.14}), (\ref{A.15}), (\ref{A.17})
and (\ref{A.18}).
\end{proof}

\begin{corollary} \label{cor2} Suppose that $\{\E(t)\}_{t \in [0,1]}$
is a smooth $1$-parameter
family of superconnections on ${\mathcal H}$.
Put $\A = \E(0)$ and $\A^\prime = \E(1)$.
If $\A_{[0]}$ and $\A_{[0]}'$ are invertible then
\begin{equation} \label{A.19}
\eta(\A,\A'  ) - \eta(\A, \infty) + \eta(\A', \infty)
\in \Image ( \Ch : \K^1(M) \rightarrow \HH^{odd}(M; \R)).
\end{equation}
If $\E_{[0]}(t)$ is invertible for all $t \in [0,1]$
then the expression in (\ref{A.19}) vanishes in $\Omega^{odd}(M)/\Image(d)$.
\end{corollary}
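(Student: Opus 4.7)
The second assertion is immediate from the product case of Lemma \ref{lem12}. Indeed, applied to the product Hilbert bundle over $[0,1]\times M$ with the given family $\{\E(s)\}_{s\in[0,1]}$, whose degree-$0$ part is invertible for every $s$ by hypothesis, formula \eqref{2.25} reads $\eta(\A,\infty) - \eta(\A',\infty) = \eta(\A,\A')$, which rearranges to the vanishing of $\eta(\A,\A') - \eta(\A,\infty) + \eta(\A',\infty)$ in $\Omega^{odd}(M)/\Image(d)$.

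For the first assertion, the plan is to apply Proposition \ref{prop9} to the superconnection $\B := dt \wedge \partial_t + \E(t)$ on $[0,1]\times M$. By continuity, invertibility of $\A_{[0]}$ and $\A'_{[0]}$ yields an $\varepsilon > 0$ such that $\E(t)_{[0]}$ is invertible on $U := \bigl([0,\varepsilon) \cup (1-\varepsilon, 1]\bigr) \times M$, so that $\eta(\B,\infty)$ is well-defined on $U$. Proposition \ref{prop9} then asserts that $(\Ch(\B), \eta(\B,\infty))$ represents the Chern character of the index class $\Ind(\B_{[0]}^+)$ in the relative group $\HH^{even}([0,1]\times M, U;\R)$. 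Since $U$ deformation retracts onto $\{0,1\}\times M$, the suspension isomorphism identifies $\K^0([0,1]\times M, U)$ with $\K^{-1}(M) = \K^1(M)$, so this index class corresponds to an element of $\K^1(M)$.

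The final step is to push the relative cohomology identity forward along the interval. Integration over $[0,1]$ gives a chain map from $\Omega^*([0,1]\times M, U)$ to $\Omega^{*-1}(M)/\Image(d)$ that, via the suspension isomorphism above, is compatible with the topological pushforward $\K^0([0,1]\times M, U) \to \K^1(M)$ and hence with the Chern character. A concrete computation using \eqref{2.12}: the integral of the $dt$-part of $\Ch(\B)$ equals $-\int_0^1 \Str\bigl(\tfrac{d\E(t)}{dt}\, e^{-\E(t)^2}\bigr)\, dt$, which in turn equals $-\eta(\A,\A')$ modulo exact forms by Lemma \ref{lem8} (comparing the family $\E(t)$ with the linear interpolation used in the definition \eqref{2.9}); meanwhile the contributions from $\eta(\B,\infty)$ near $\{0\}\times M$ and $\{1\}\times M$ yield $\eta(\A,\infty)$ and $-\eta(\A',\infty)$, respectively. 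Hence $\eta(\A,\A') - \eta(\A,\infty) + \eta(\A',\infty)$ is the Chern character of a class in $\K^1(M)$, proving \eqref{A.19}.

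The main obstacle is making the chain-level pushforward in relative de Rham cohomology precise. Concretely, one must choose smooth cutoffs in the $t$-variable near $\{0,1\}$ to isolate the endpoint eta-form contributions from the bulk integral of $\Ch(\B)$, then absorb the resulting error terms as exact forms via a Stokes-type argument on $[0,1]\times M$. A subsidiary point is the identification of the time-integral $\int_0^1 \Str\bigl(\tfrac{d\E(t)}{dt}\, e^{-\E(t)^2}\bigr)\, dt$ with $\eta(\A,\A')$ modulo exact forms: this is a direct application of Lemma \ref{lem8}, possibly via an intermediate homotopy to ensure that the linear path $(1-t)\A + t\A'$ is a valid family of superconnections, which requires $\A_{[0]} - \A'_{[0]} \in op^0$.
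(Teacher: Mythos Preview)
Your approach is essentially the paper's: form $\B = dt\wedge\partial_t + \E(t)$, apply Proposition~\ref{prop9}, and push the relative pair $(\Ch(\B),\eta(\B,\infty))$ down to $\HH^{odd}(M;\R)$, where it becomes the Chern character of a $K^1$-class and equals the expression in \eqref{A.19}.

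The difference lies in how the ``main obstacle'' you identify is handled. Rather than working on $[0,1]\times M$ with $U=\bigl([0,\varepsilon)\cup(1-\varepsilon,1]\bigr)\times M$ and introducing cutoffs, the paper first restricts to a compact codimension-zero $V\subset M$, sets $M'=V\times\R$, extends $\E(t)$ to be \emph{constant} for $t\le 0$ and $t\ge 1$, and takes $U=V\times\bigl((-\infty,0)\cup(1,\infty)\bigr)$. Because $\E$ is constant on each component of $U$, the form $\eta(\B,\infty)$ there is literally $\eta(\A,\infty)$ or $\eta(\A',\infty)$ with no error terms; the chain-level map $i_H(\omega,\sigma)=\bigl(\int_0^1\omega\bigr)-\sigma(1)+\sigma(0)$ then directly produces \eqref{A.19}. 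Restricting to compact $V$ also sidesteps the uniformity issue hidden in your choice of $\varepsilon$ on a possibly noncompact $M$. Your derivation of the second assertion from Lemma~\ref{lem12} is correct and slightly more direct than the paper's, which instead observes that when every $\E(t)_{[0]}$ is invertible the index class $[\B_{[0]}^+]$ vanishes in $K^0(M',U)$, forcing the (closed) expression in \eqref{A.19} to be exact.
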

\begin{proof}
Let $V$ be a codimension-zero submanifold of $M$ with compact closure.
(If $M$ is compact then we just take $V = M$.)
Put $M^\prime =
V \times \mathbb{R}$ and  $U= V \times ((-\infty,0)\cup (1, \infty))$.
There are isomorphisms
$i_K : \K^0(M^\prime, U) \rightarrow \K^1(V)$ and
$i_H  \: : \: \HH^{even}(M^\prime, U) \rightarrow \HH^{odd}(V)$, where $i_H$ is
represented by
\begin{equation} \label{A.20}
i_H (\omega, \eta) = \left( \int_0^1 \omega \right) - \eta(1) + \eta(0).
\end{equation}
The maps $i_K$ and $i_H$ are consistent in the sense that
\begin{equation} \label{A.21}
\Ch \circ i_K = i_H \circ \Ch.
\end{equation}

Extend $\E(t)$ to be constant on $(- \infty, 0)$ and constant on
$(1,\infty)$. By reparametrizing $\R$ if necessary, we can assume that
$\E$ is smooth in $t$.
Put $\B = dt \wedge \partial_t + \E(t)$, a superconnection on $M^\prime$.
The family $\B_{[0]}^+$ of operators defines an element
$[\B_{[0]}^+] \in \K^0(M^\prime, U)$,
since the operators are invertible on $U$.

Equations (\ref{2.9}) and (\ref{A.20})
along with Proposition \ref{prop9}, when applied to $\B$,
$M^\prime$ and $U$, give
\begin{equation} \label{A.22}
\Ch(i_K([\B_{[0]}^+])) = - \eta(\A, \A^\prime) - \eta(\A^\prime, \infty) +
\eta(\A, \infty).
\end{equation}
After exhausting $M$ by such $V$'s,
this proves the first part of the corollary.  If each $\E(t)$ is
invertible then $[\B_{[0]}^+]$ vanishes in $\K^0(M^\prime, U)$, which implies
the second part of the corollary.
\end{proof}

\end{document}